\newcommand{\apref}[3]{\hyperref[#2]{#1\ref*{#2}#3}}
\theoremstyle{plain}
\newtheorem{prop}{Proposition}[section]
\newtheorem{lemma}[prop]{Lemma}
\newtheorem{thm}[prop]{Theorem}
\newtheorem*{thmnn}{Theorem}
\theoremstyle{definition}
\newtheorem{conj}[prop]{Conjecture}
\theoremstyle{remark}
\newcommand{\homsp}{\mathcal X}
\DeclareMathOperator{\diam}{diam}
\newcommand{\compact}{\mathcal K}
\DeclareMathOperator{\End}{End}
\DeclareMathOperator{\SL}{SL}
\DeclareMathOperator{\Ad}{Ad}
\DeclareMathOperator{\height}{ht}
\newcommand\N{\mathbb{N}}
\newcommand\R{\mathbb{R}}
\newcommand\Z{\mathbb{Z}}
\newcommand{\mc}[1]{\mathcal #1}
\newcommand{\mf}[1]{\mathfrak #1}
\newcommand{\mft}[2]{\mathfrak #1\mathfrak #2}
\newcommand{\wt}{\widetilde}
\newcommand{\eps}{\varepsilon}
\newcommand{\sceq}{\mathrel{\mathop:}=}
\newcommand{\seqc}{\mathrel{=\mkern-4.5mu{\mathop:}}}
\begin{document}

\title[Upper-semicontinuity and Hausdorff dimension]{Amount of failure of upper-semicontinuity of entropy in noncompact rank one situations, and Hausdorff dimension}
\author{S.\@ Kadyrov}
\author{A.\@ Pohl}
\address[AP]{Mathematisches Institut, Georg-August-Universit\"at G\"ottingen,  Bunsenstr. 3-5, 37073 G\"ottingen}
\email{pohl@uni-math.gwdg.de}
\address[SK]{School of Mathematics, University of Bristol, Bristol, UK}
\email{shirali.kadyrov@bristol.ac.uk}
\keywords{Hausdorff dimension, divergent on average, escape of mass, entropy, diagonal flows}
\subjclass[2010]{Primary: 37A35, 37D40, Secondary: 28D20, 22D40}
\thanks{S.K.\@ acknowledges the support by the EPSRC. A.P.\@ acknowledges the support by the SNF (Grant 200021-127145) and by the ERC Starting Grant ANTHOS}
\begin{abstract}
Recently, Einsiedler and the authors provided a bound in terms of escape of mass for the amount by which upper-semicontinuity for metric entropy fails for diagonal flows on homogeneous spaces $\Gamma\backslash G$, where $G$ is any connected semisimple Lie group of real rank $1$ with finite center and $\Gamma$ is any nonuniform lattice in $G$. We show that this bound is sharp and apply the methods used to establish bounds for the Hausdorff dimension of the set of points which diverge on average.
\end{abstract}

\maketitle

\section{Introduction}

Let $G$ be a connected semisimple Lie group of $\R$-rank $1$ with finite center and $\Gamma$ a nonuniform lattice in $G$. Further let $a\in G\setminus\{1\}$ be chosen such that its adjoint action $\Ad_a$ on the Lie algebra $\mf g$ of $G$ is $\R$-diagonalizable. The element $a$ acts on the homogeneous space $\homsp\sceq \Gamma\backslash G$ by right multiplication, defining the (generator of the) discrete geodesic flow
\[
 T\colon \homsp\to \homsp,\ x\mapsto xa.
\]
The following relation between metric entropies of $T$ and escape of mass has been proven in \cite{EKP}. Here, $h_m(T)$ denotes the maximal entropy of $T$.

\begin{thmnn}
Let $(\mu_j)_{j\in\N}$ be a sequence of $T$-invariant probability measures on $\homsp$ which converges to the measure $\nu$ in the weak* topology. Then
\begin{equation}\label{mastereq}
\nu(\homsp) h_{\frac{\nu}{\nu(\homsp)}}(T) + \frac12 h_m(T) \cdot \left( 1- \nu(\homsp)\right) \geq \limsup_{j\to\infty} h_{\mu_j}(T),
\end{equation}
where it does not matter how we interprete $h_{\frac{\nu}{\nu(\homsp)}}(T)$ if $\nu(\homsp)=0$.
\end{thmnn}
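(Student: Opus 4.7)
The plan is to combine a Misiurewicz-style upper bound on entropy via finite partitions with a geometric analysis of orbit excursions into the rank-one cusps of $\homsp$. For each sufficiently large $R>0$ I would fix a finite Borel partition $\mathcal{P}_R=\{P_0^R,P_1^R,\dots,P_{k_R}^R\}$ satisfying $\nu(\partial\mathcal{P}_R)=0$, in which the distinguished atom $P_0^R$ equals the union of cusp neighbourhoods of depth at least $R$ (measured via the Iwasawa $A$-coordinate) and the remaining atoms, of small diameter, cover a compact core $K_R$; such $R$ exist for all but countably many values. Setting $\mathcal{Q}_R\sceq\{K_R,P_0^R\}$ and $\mathcal{P}_R^n\sceq\bigvee_{i=0}^{n-1}T^{-i}\mathcal{P}_R$, conditional subadditivity gives
\begin{equation*}
H_{\mu_j}(\mathcal{P}_R^n) \leq H_{\mu_j}(\mathcal{Q}_R^n) + H_{\mu_j}\bigl(\mathcal{P}_R^n \bigm| \mathcal{Q}_R^n\bigr).
\end{equation*}
The first summand encodes the binary pattern of cusp visits and grows per unit time at rate at most the binary entropy of $\mu_j(P_0^R)$, a lower-order term.

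The key input is the estimate of the conditional term. One writes it as an average over cusp-visit patterns $\tau\in\{0,1\}^n$ and, within each class, bounds the number of refining $\mathcal{P}_R^n$-atoms by a Bowen-type spanning count on orbit segments matching $\tau$. Inside $K_R$ each time step contributes at most $\exp(h_m(T))$ to this count, by the usual variational principle in the compact setting. Inside a cusp excursion of length $m$, however, the unstable horocycle in the quotient by the rank-one cusp stabilizer wraps around so tightly that the effective number of distinguishable unstable Bowen balls over the entire excursion is only $\exp\!\bigl(\tfrac{m}{2}h_m(T)\bigr)$ rather than $\exp(m\cdot h_m(T))$; this is the geometric origin of the factor $\tfrac12$. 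Heuristically, a geodesic excursion of duration $m$ reaches maximum depth $\approx m/2$, the apex horocycle has quotient length $\approx\exp(-\tfrac{m}{2}h_m(T))$, and two entry directions can be distinguished only to that precision. Concavity of entropy (Jensen) converts this atom count into
\begin{equation*}
\tfrac1n H_{\mu_j}\bigl(\mathcal{P}_R^n \bigm| \mathcal{Q}_R^n\bigr) \leq \bigl(1-\beta_j^R\bigr)\, h_{\mu_j}\bigl(T,\mathcal{P}_R|_{K_R}\bigr) + \beta_j^R\cdot\tfrac12 h_m(T) + \eta(R,n),
\end{equation*}
where $\beta_j^R\sceq\mu_j(P_0^R)$ and $\eta(R,n)\to 0$ as $n\to\infty$ and $R\to\infty$.

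The proof concludes by passing to the limit in two steps. Fixing $R$ and sending $j\to\infty$: since $\nu(\partial\mathcal{P}_R)=0$, weak* convergence gives $\mu_j(A)\to\nu(A)$ for each $A\in\mathcal{P}_R$, hence $\beta_j^R\to 1-\nu(K_R)$, and the normalized restrictions $\mu_j|_{K_R}/\mu_j(K_R)$ converge weak* to $\nu|_{K_R}/\nu(K_R)$; classical upper semicontinuity of partition entropy on the compact space $K_R$ then bounds the core contribution by $\nu(K_R)\, h_{\nu|_{K_R}/\nu(K_R)}\bigl(T,\mathcal{P}_R|_{K_R}\bigr)$. Sending $R\to\infty$ along a suitable sequence so that the restricted partitions become generating for $\nu/\nu(\homsp)$, the core term rises to $\nu(\homsp)\, h_{\nu/\nu(\homsp)}(T)$ while $1-\nu(K_R)$ decreases to $1-\nu(\homsp)$, yielding the stated inequality. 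The main obstacle is the cusp estimate in the middle paragraph: rigorously justifying the $\tfrac12$ rate demands explicit horospherical coordinates on each cusp neighbourhood and precise bookkeeping of how $T$-orbits ascend and descend under the contraction/expansion prescribed by $\Ad_a$, with all error terms uniform in $\mu_j$ and $R$.
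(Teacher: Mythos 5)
A remark on provenance first: the present paper does not prove this theorem at all; it is quoted from \cite{EKP}, and the paper's own contribution is the sharpness of the factor $\tfrac12$ (Theorem~\ref{thm:entropylowerbound}) and the Hausdorff-dimension bounds. Your outline does follow the same route as the actual proof in \cite{EKP}: a finite partition with a distinguished cusp atom and $\nu$-null boundary, a Misiurewicz-type subadditivity estimate splitting the entropy into a compact-core part and a cusp-excursion part, a counting bound of order $e^{\frac{m}{2}h_m(T)}$ for orbit segments spending time $m$ high in a cusp, and the two-step limit $j\to\infty$, then $R\to\infty$. So the architecture is right.

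The genuine gap is exactly where you locate it, and it is not a small one: the claim that an excursion of length $m$ into the cusp produces only $e^{\frac{m}{2}h_m(T)}$ distinguishable unstable Bowen balls \emph{is} the theorem; everything else in your sketch is standard entropy bookkeeping. Your heuristic (``apex horocycle has quotient length $\approx e^{-\frac{m}{2}h_m(T)}$, two entry directions can be distinguished only to that precision'') is not yet an argument, and in the general rank one case it elides two points. First, $\Ad_a$ acts on the unstable directions with two different rates ($e^{-k}$ on the $\mf g_2$-component and $e^{-k/2}$ on the $\mf g_1$-component, cf.\ the conjugation formula in Section~\ref{sec:prelims}), so ``length of the apex horocycle'' must be replaced by a covering count of an unstable ball modulo the cusp lattice $\Gamma\cap\xi N\xi^{-1}$, with the normalization $h_m(T)=\tfrac{p_1}2+p_2$ emerging only after both rates are tracked. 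Second, the assertion that an excursion reaches maximal depth at time $\approx m/2$ and that ascent contributes no separation requires the quantitative structure of excursions in rank one: the explicit height formulas along the orbit and the fact that once the height starts to decrease it decreases monotonically until the orbit re-enters the compact part (the properties of $s_1$ recalled in Section~\ref{sec:prelims}). In \cite{EKP} precisely this is packaged into covering lemmas for the set of points staying above a given height (the Lemmas~8.5 and 8.6 invoked in Section~\ref{sec:upper} here), and proving them is the ``precise bookkeeping'' you defer. A secondary, fixable wrinkle: your final limit treats $h_{\nu|_{K_R}/\nu(K_R)}(T,\mathcal P_R|_{K_R})$ as an entropy on $K_R$, but $K_R$ is not $T$-invariant; the clean way, as in \cite{EKP}, is to keep all conditional entropies with respect to the full measures and pass to $\nu(\homsp)h_{\nu/\nu(\homsp)}(T)$ only at the end.
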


Since $\Gamma$ is not cocompact, upper semi-continuity of metric entropy cannot be expected on $\homsp$. The theorem above shows that the amount by which it may fail is controlled by the escaping mass. In this formula, the factor $\tfrac12$ is significant: it shows that the amount of failure is only half as bad as it could be \textit{a priori} (which would be the factor $1$).

The first aim of this article is to show that the factor $\tfrac12$ is best possible. More precisely, we will establish the following theorem. 

\begin{thm}
\label{thm:entropylowerbound}
For any $c \in [\tfrac12h_m(T),h_m(T)]$, there exists a convergent sequence of $T$-invariant probability measures $(\mu_j)_{j\in\N}$ on $\homsp$ with $\lim_{j \to \infty} h_{\mu_j}(T)=c$ such that its weak* limit $\nu$ satisfies 
\[
\nu(\homsp) = \frac{2c}{h_m(T)}-1.
\]                                                                                                           
\end{thm}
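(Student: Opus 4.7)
The plan is to interpolate between two extremes by taking convex combinations. Let $m$ be a $T$-invariant Borel probability measure on $\homsp$ of maximal entropy $h_m(T)$; such $m$ exists since $\homsp$ has finite Haar volume (take, for instance, the normalized Bowen--Margulis--Sullivan measure). Set $\alpha := \frac{2c}{h_m(T)} - 1 \in [0,1]$. Suppose we have at our disposal a sequence $(\sigma_j)_{j\in\N}$ of $T$-invariant Borel probability measures on $\homsp$ with $\sigma_j \to 0$ in the weak* topology (i.e.\ fully escaping to infinity) and $h_{\sigma_j}(T) \to \tfrac12 h_m(T)$. Then $\mu_j := \alpha m + (1-\alpha)\sigma_j$ is a $T$-invariant Borel probability measure on $\homsp$, and affinity of the Kolmogorov--Sinai entropy gives
\[
h_{\mu_j}(T) \;=\; \alpha\, h_m(T) + (1-\alpha)\, h_{\sigma_j}(T) \;\longrightarrow\; \alpha h_m(T) + \tfrac{1-\alpha}{2} h_m(T) \;=\; c,
\]
while $\mu_j \to \alpha m$ in weak* with $(\alpha m)(\homsp) = \alpha = \frac{2c}{h_m(T)}-1$, as required. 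The endpoints $c = h_m(T)$ and $c = \tfrac12 h_m(T)$ appear as $\alpha = 1$ (take $\mu_j\equiv m$) and $\alpha=0$ (take $\mu_j=\sigma_j$).

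The crux, and the main obstacle, is therefore the construction of the escaping sequence $(\sigma_j)$, which is what saturates the $\tfrac12$-factor in the bound from \cite{EKP}. I would obtain it from $T$-invariant compact sets $K_j\subset\homsp$ lying entirely in a cusp region of height at least $Y_j\to\infty$ with topological entropy $h_{\mathrm{top}}(T|_{K_j}) \to \tfrac12 h_m(T)$; then a measure of maximal entropy of $T|_{K_j}$, extended by zero, provides a candidate $\sigma_j$, with escape to infinity immediate from $Y_j\to\infty$ and the entropy convergence from the variational principle.

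The sets $K_j$ can be built from the unipotent structure of a cusp neighbourhood of $\homsp$ of the form $\Gamma_P\backslash NA$, together with the expansion of $\Ad_a$ on the horospherical subgroup $N$: retain precisely the orbits of $a$ which remain in the cusp above height $Y_j$ for the first $n_j\to\infty$ iterates. The reason the resulting topological entropy is $\tfrac12 h_m(T)$ rather than $h_m(T)$ is that, in the cusp, only the unstable horospherical direction genuinely contributes to $(n,\eps)$-separated-set counts, the stable direction being contracted still further into the cusp and so generating no new $(n,\eps)$-separated points; this is the exact dual of the mechanism which forces the $\tfrac12$-factor in the upper bound of \cite{EKP}. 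Carrying out this separated-set count precisely, and showing that the supremum $\tfrac12 h_m(T)$ is indeed approached as $Y_j\to\infty$, is the technical heart of the argument.
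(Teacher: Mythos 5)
Your reduction via convex combinations is exactly the paper's: set $\alpha=\frac{2c}{h_m(T)}-1$, take $\nu_n=\alpha m+(1-\alpha)\mu_n$ with $m$ the normalized Haar (maximal entropy) measure and $(\mu_n)$ a sequence escaping to infinity with $h_{\mu_n}(T)\to\frac12 h_m(T)$, and use affinity of entropy. The problem is the part you yourself call the crux. You propose to obtain the escaping measures as measures of maximal entropy on $T$-invariant \emph{compact} sets $K_j$ contained entirely in a cusp region $\homsp_{\geq Y_j}$. In rank one such sets do not exist (for $Y_j$ large): by the height formulas recalled in Section~2, along any orbit with $\height_\xi>s_1$ the height is of the form $re^{-k}$ or $re^{-k}/\bigl((e^{-k}+\tfrac14|X|^2)^2+|Z|^2\bigr)$, so it either eventually decreases below any fixed level or (when $Z=X=0$) increases to infinity; an orbit confined forever to a band $Y\leq\height\leq Y'$ is impossible, hence $\homsp_{\geq Y}$ contains no nonempty compact invariant set, and no invariant probability measure can charge only points that stay in the cusp (Poincar\'e recurrence). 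So the object your construction hinges on is vacuous, and the variational-principle step has nothing to apply to.

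What the paper does instead is precisely designed around this obstruction: one builds orbit segments that make excursions of length $R$ high into the cusp but must return to a fixed compact set $\homsp_{\leq s}$, and concatenates infinitely many such excursions separated by connecting segments of bounded length $R'$ using a shadowing/mixing argument (Lemmas~\ref{lem:shadow}--\ref{lem:joinshadow} and Theorem~\ref{thm:main}); the measures $\mu_n$ are then weak* limits of Birkhoff averages of uniform measures on $(mR+(m-1)R',\eta')$-separated sets (Proposition~\ref{thm:entropymain}, Lemma~\ref{specialmeasure}), which gives $h_{\mu_n}(T)\geq\frac{\log S}{R+R'}$ and mass $>1-\eps$ above height $s$. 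The factor $\tfrac12$ also has a different source than the stable/unstable heuristic you give (the stable direction never contributes to entropy anyway): for an excursion of length $R$ that starts and ends at bounded height while staying high, the unstable coordinates are forced into a box of size roughly $e^{-R/2}\times e^{-R/4}$ rather than the full contraction scale $e^{-R}\times e^{-R/2}$ (Proposition~\ref{prop:main}), so each excursion yields only about $e^{\frac{R}{2}h_m(T)}$ separated points -- the up-and-down structure of a cusp excursion halves the exponent. Since your proposal defers exactly this counting and rests on invariant sets that cannot exist, it does not yet contain a proof of the key half of the theorem.
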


For any such sequence $(\mu_j)$, equality holds in \eqref{mastereq} as well as
\[
 h_{\frac{\nu}{\nu(\homsp)}}(T) = h_m(T) \quad\text{for $\nu(\homsp)\not=0$}
\]
(and hence $\nu/\nu(\homsp)$ is the normalized Haar measure on $\homsp$).

The second aim of this article is to relate the factor $\tfrac12$ to the Hausdorff dimension of the set of points which diverge on average. We recall that a point $x \in \mc X$ is said to \emph{diverge on average} (with respect to $T$) if for any compact subset $\compact$ of $\homsp$ we have
\[
\lim_{n \to \infty}\frac{1}{n}\left|\left\{i \in \{0,1,\dots,n-1\} \mid T^i (x) \in \mc K \right\}\right|=0.
\]
It is said to be \textit{divergent} (with respect to $T$) if its forward trajectory under $T$ eventually leaves any compact subset. In other words, if for any compact subset $\compact$ of $\homsp$ we find $N\in\N$ such that for $n>N$ we have $T^nx\notin\compact$.

Obviously, each divergent point diverges on average. Let
\[
 U\sceq\{u \in G \mid a^{n} u a^{-n}\to 1 \text{ as } n \to \infty\}
\]
denote the unstable subgroup with respect to $a$. From \cite{DaniDivergent} and also from \cite{EKP} it follows that the Hausdorff dimension of the set of divergent points is $\dim G -\dim U$. However, for the set of averagely diverging points we prove that its Hausdorff dimension is strictly larger than $\dim G - \dim U$. Moreover, we also obtain an upper estimate showing that its dimension is strictly less than the full dimension. To state these results more detailed, let
\[
 \mc D \sceq \{ x\in\homsp \mid \text{$x$ diverges on average}\}.
\]
The Lie group $G$ has at most two positive roots, namely a short one, denoted $\alpha$, and the long one $2\alpha$. Let 
\[
 p_1 \sceq \dim \mf g_{\alpha} \quad\text{and}\quad p_2 \sceq \dim \mf g_{2\alpha}.
\]
The group $G$ has a single positive root if and only if it consists of isometries of a real hyperbolic space. In this case, we set $p_1=0$ or $p_2=0$ (both cases are possible and relevant, see Section~\ref{sec:prelims}).

\begin{thm}\label{thm:dim} For the Hausdorff dimension of $\mc D$ we have the estimates
\[
\dim G-\frac{1}{2}\dim U- \frac{p_2}2 \leq \dim \mc D \leq \dim G -\frac{1}{2}\dim U+\frac{p_1}{4}.
\]
\end{thm}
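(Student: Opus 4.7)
I would prove the two bounds by very different methods.

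\emph{Upper bound.} The central input is the entropy estimate \eqref{mastereq}. Although no $T$-invariant probability measure is supported on $\mc D$ (Poincar\'e recurrence), for each $x\in\mc D$ the empirical averages $\mu_{x,N}\sceq\frac1N\sum_{i=0}^{N-1}\delta_{T^ix}$ converge to the zero measure in weak${}^*$. Approximating $\mu_{x,N}$ by genuine $T$-invariant probability measures with the same (escaping) weak${}^*$ limit and applying the EKP theorem with $\nu=0$ bounds the entropy of every such companion by $\tfrac12 h_m(T)$. Through a Bowen-type variational principle this yields $h_{\mathrm{top}}(T|_{\mc D})\leq\tfrac12 h_m(T)$ for the topological entropy of the non-compact set $\mc D$. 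One then converts this into a dimension bound on a typical intersection of $\mc D$ with an unstable leaf via a Ledrappier--Young formula for the two positive Lyapunov exponents $\alpha(a)$ and $2\alpha(a)$ (of multiplicities $p_1$ and $p_2$): the partial dimensions $d_1\leq p_1$, $d_2\leq p_2$ must satisfy $\alpha(a)d_1+2\alpha(a)d_2\leq\tfrac12 h_m(T)$, and the resulting linear programme has maximum $d_1+d_2=\tfrac{3p_1}{4}+\tfrac{p_2}{2}$. Adding the transverse contribution $\dim G-\dim U$ (by Marstrand slicing) yields the stated upper bound.

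\emph{Lower bound.} I would construct an explicit Cantor subset of $\mc D$ near a cusp. Parametrising an unstable leaf by $\mf g_\alpha\oplus\mf g_{2\alpha}$, on which $T$ acts by expansions $e^{\alpha(a)}$ and $e^{2\alpha(a)}$, the height of $T^nx$ in the cusp is controlled by the corresponding coordinates of $x$. Freezing the $\mf g_{2\alpha}$-component at $0$ and building a self-similar family in $\mf g_\alpha$ of scale $e^{-\alpha(a)n/2}$ at step $n$, arranged so that every retained point lies in a horoball of height tending to $\infty$ along a subsequence of times of density one, produces a Cantor set in $\mf g_\alpha$. The mass distribution principle applied to the natural self-similar measure gives this set Hausdorff dimension $\tfrac{p_1}{2}$, while a quantitative Borel--Cantelli-type estimate verifies that each of its points does diverge on average. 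Taking the product with all stable and centraliser directions and invoking the Marstrand product inequality yields a subset of $\mc D$ of Hausdorff dimension $\dim G-\dim U+\tfrac{p_1}{2}=\dim G-\tfrac12\dim U-\tfrac{p_2}{2}$.

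\emph{Main obstacle.} The harder half is the upper bound: one must deploy the Ledrappier--Young/Bowen framework rigorously in the non-compact, escape-to-zero regime so as to connect \eqref{mastereq} with the Hausdorff dimension of $\mc D$ itself rather than with measure-theoretic quantities only. The lower bound is comparatively direct; the only subtle check is verifying the divergence-on-average property of the Cantor points.
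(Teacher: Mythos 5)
Your upper bound is where the real gap lies. The inequality \eqref{mastereq} is a statement about sequences of $T$-invariant probability measures, and for $x\in\mc D$ the empirical measures $\mu_{x,N}$ converge weak* to the zero measure and are not invariant; there is no ``companion'' invariant probability measure attached to $x$ to which the theorem could be applied, so no measure-theoretic entropy is available, and Bowen's variational bound for the topological entropy of a non-compact set (which needs the limit measures of the generic points to be invariant probability measures) does not apply in this total-escape regime. Likewise, the Ledrappier--Young formula relates the entropy of an \emph{invariant measure} to the transverse dimensions of its conditionals on unstable leaves; it gives no constraint on the Hausdorff dimension of a set carrying no invariant measure, and ``Marstrand slicing'' does not provide upper bounds for the Hausdorff dimension of a set in terms of the dimensions of its slices. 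The paper proves the upper bound by an entirely different, direct covering argument in the spirit of \cite{Einsiedler_Kadyrov}: the only dynamical input is Lemma~\ref{measureest} (which encodes the two contraction rates on $\mf g_2$ and $\mf g_1$, hence the exponent $\dim NAM+\tfrac{p_1}2-\beta$) together with Lemmas~8.5 and 8.6 of \cite{EKP}, which count Bowen balls for orbit segments spending most of their time high in the cusp. Your linear programme, whose optimum $\tfrac{3p_1}{4}+\tfrac{p_2}{2}$ does reproduce the stated exponent, is in effect carried out there at the level of coverings, not via an entropy of $\mc D$ itself; as written, your chain of reductions would not go through.

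Your lower bound is in the same spirit as the paper's (a Cantor set in the unstable direction consisting of points making cusp excursions of time-density one, then adding the $NAM$-directions), but it omits the step that is actually hard. A single excursion, governed by the height formula, goes up and must come back down, and when the orbit returns to the compact part it sits at an essentially arbitrary point there; the next excursion therefore cannot be produced merely by refining a self-similar family in the $\mf g_1$-coordinates of one fixed horoball. The paper glues successive excursions by a shadowing-plus-mixing argument (Lemmas~\ref{lem:shadow}--\ref{lem:joinshadow} and Theorem~\ref{thm:main}), and the waiting time $R'$ and the geometrically decaying corrections (of size $\lambda_0^{-F(n)}$) are exactly what make the resulting collection strongly tree-like and allow the verification that every limit point diverges on average (Proposition~\ref{limitsetdivergent}); without such a concatenation mechanism, or an arithmetic Jarn\'ik-type substitute, your ``quantitative Borel--Cantelli check'' has nothing to act on. Two smaller remarks: the paper does not freeze the $\mf g_2$-component but uses half-density separated sets in both root spaces ($\lfloor e^{R/2}\rfloor^{p_2}\lfloor e^{R/4}\rfloor^{p_1}$ points per excursion), obtaining $\dim_H{\bf U}_\infty\ge p_1/2$ from the Kleinbock--Margulis bound (Lemma~\ref{lem:Frostman}) rather than a mass distribution argument, though your frozen-$\mf g_2$ count targets the same exponent; and adding $\dim(NAM)$ is justified because the base point may be varied over a set of dimension at least $\dim(NAM)$ as in Proposition~\ref{prop:main}, which is the form of the product argument actually used.
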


The proof of Theorem~\ref{thm:dim} shows that the factor $\tfrac12$ of $\dim U$ arises for the same reason as the factor $\tfrac12$ in \eqref{mastereq}. If $G$ consists of isometries of a real hyperbolic space, we obtain the following improvement. It is caused by the fact that in this case, the adjoint action of $a$ has a single eigenvalue of modulus greater than $1$. 

\begin{thm}\label{thm:dimsp}
Suppose that $G$ consists of isometries of a real hyperbolic space. Then 
\[
\dim\mc D = \dim G-\frac{1}{2}\dim U.
\]
\end{thm}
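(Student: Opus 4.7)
In the real hyperbolic case, $\Ad_a$ has a single positive Lyapunov exponent $\lambda$ on the Lie algebra $\mf u$ of $U$. I adopt the convention that the unique positive root is $\alpha$ (so $p_2=0$ and $p_1=\dim U$), under which the lower bound of Theorem~\ref{thm:dim} already reads $\dim\mc D\geq\dim G-\tfrac12\dim U$; it therefore suffices to establish the matching upper bound. (The alternative convention $p_1=0$ is handled symmetrically, with the roles of the two bounds in Theorem~\ref{thm:dim} swapped.)

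The plan for the upper bound is a refined Bowen-ball covering that exploits the single expansion rate. The key geometric observation is that in the real hyperbolic case, the Bowen ball
\[
 B_n(x,r)\sceq\bigl\{y\in\homsp: d(T^iy,T^ix)\leq r,\ 0\leq i<n\bigr\}
\]
is, up to bi-Lipschitz constants, a rectangular region with a single scale per direction: radius $r$ in both the stable and the centralizer-of-$a$ directions, and radius $re^{-n\lambda}$ in the unstable direction. In the general rank-one setting the unstable direction splits instead into two shells of scales $re^{-n\lambda}$ and $re^{-2n\lambda}$, and it is exactly this splitting that introduces the excess $\tfrac{p_1}{4}$ in Theorem~\ref{thm:dim}. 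With the uniform-scale description above, a covering of $B_n(x,r)$ by ambient balls of radius $\rho\sceq re^{-n\lambda}$ uses only $\sim e^{n\lambda(\dim G-\dim U)}$ pieces.

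For $\eta>0$ and $H>0$ set $\compact_H\sceq\{x\in\homsp:\height(x)\leq H\}$ and
\[
 \mc D_{H,\eta}\sceq\Bigl\{x\in\homsp:\liminf_{n\to\infty}\tfrac1n\bigl|\{0\leq i<n:T^ix\in\homsp\setminus\compact_H\}\bigr|\geq 1-\eta\Bigr\};
\]
plainly $\mc D\subseteq\mc D_{H,\eta}$ for every such pair. The core estimate to be proved is a Bowen-ball count
\[
 N(n)\leq C\exp\bigl(n\lambda\dim U\cdot\bigl(\tfrac12(1-\eta)+\eta\bigr)\bigr)
\]
for the number $N(n)$ of Bowen balls of order $n$ and fixed scale $r$ needed to cover $\mc D_{H,\eta}$, provided $H$ is sufficiently large (in terms of $\eta$). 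This is the metric counterpart of the entropy bound \eqref{mastereq}: an orbit segment of length $k$ that sits in the deep cusp contributes only $\sim e^{k\lambda\dim U/2}$ Bowen-distinguishable companions rather than the naive $e^{k\lambda\dim U}$, because the lattice identifications on the horoball wrap half of the potential unstable translates onto themselves. Combined with the per-Bowen-ball estimate above, this yields $\dim\mc D_{H,\eta}\leq\dim G-\tfrac12(1-\eta)\dim U$, and letting $\eta\to 0$ completes the proof.

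The main obstacle is establishing the sharp factor $\tfrac12$ in the Bowen-ball count: this is where the real hyperbolic hypothesis enters in an essential way. A cusp horoball is metrically a flat cylinder (the cusp-nilpotent subgroup is abelian) and the unstable direction contracts isotropically, so a single-scale counting argument produces the clean factor $\tfrac12$. In the remaining rank-one cases the horoball is a Heisenberg-type nilmanifold and the two unstable expansion rates interact nontrivially with the non-commutative lattice identifications, breaking this argument and forcing the weaker bounds of Theorem~\ref{thm:dim}.
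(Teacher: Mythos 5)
Your reduction and your lower bound are fine, and they coincide with the paper's route: in the real hyperbolic case one may work with the convention $\mf g_2=\{0\}$, i.e.\ $p_2=0$ and $p_1=\dim U$, and then Theorem~\ref{thm:lower} already gives $\dim\mc D\ge\dim G-\tfrac12\dim U$. (Your parenthetical about the ``symmetric'' convention is off: with $p_1=0$ the lower bound of Theorem~\ref{thm:dim} only yields $\dim G-\dim U$, so the two conventions are not interchangeable; but since you work with $p_2=0$ this does not hurt you.) The genuine gap is in the upper bound. In the paper this is exactly Theorem~\ref{upper2}\,(ii), obtained from Lemma~\ref{measureest} together with the covering machinery of \cite{Einsiedler_Kadyrov} and Lemmas~8.5, 8.6 of \cite{EKP}; your proposal replaces all of that by the asserted count $N(n)\le C\exp\bigl(n\lambda\dim U(\tfrac12(1-\eta)+\eta)\bigr)$, whose only justification is one sentence, and that sentence misidentifies the mechanism. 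The factor $\tfrac12$ is not caused by lattice identifications ``wrapping half of the unstable translates onto themselves''; nothing is identified along the unstable leaf at the relevant scales. It comes from the quadratic decay of the height along unstable perturbations: with $\mf g_2=\{0\}$ the height formula reads $\height_\xi\bigl(x\sigma(1,0,X)\sigma a^k\bigr)=re^{-k}/\bigl(e^{-k}+\tfrac14|X|^2\bigr)^2$, so requiring the perturbed orbit to stay above a fixed height for a whole excursion of length $R$ confines $X$ to a ball of radius $\asymp e^{-R/4}$, while Bowen-separation at time $R$ occurs at scale $\asymp e^{-R/2}$; hence an excursion carries only about $e^{R\dim U/4}=e^{R\lambda\dim U/2}$ separated companions. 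This is a ``shadow of a horoball'' (square-root) phenomenon, not an identification phenomenon, and it is precisely what Lemma~\ref{measureest} and the cited lemmas of \cite{EKP} encode.

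Even granting the correct local count, your core estimate for $N(n)$ is a global statement: one must combine the excursion counts with the trivial counts for the at most $\eta$-proportion of time spent low, uniformly over all admissible interleavings of excursions and returns, and over the noncompact set $\mc D_{H,\eta}$ (points of which may start or travel arbitrarily high, so a reduction to compact pieces is also needed). None of this is carried out in your sketch, and it is exactly the content of \cite[Lemmas 8.5, 8.6]{EKP} and of \cite[Theorem 1.4, Corollary 1.5]{Einsiedler_Kadyrov}, which the paper invokes in the proof of Theorem~\ref{upper2}. Your dimension bookkeeping downstream of the count (the single-scale Bowen-ball covering, the passage to $\dim\mc D_{H,\eta}$, and the limit $\eta\to0$) is correct, so the gap is localized: either prove the counting estimate along those lines, or simply cite Theorem~\ref{upper2}\,(ii), whose hypothesis $p_2=0$ is available in the real hyperbolic case and which, combined with Theorem~\ref{thm:lower}, is the paper's entire proof of the statement.
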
 
 
Therefore, it seems natural to expect the following precise value for the Hausdorff dimension of $\mc D$.

\begin{conj}
If $G$ is any $\R$-rank $1$ connected semisimple Lie group with finite center, then $\dim_H\mc D = \dim G - \frac{1}{2}\dim U$.
\end{conj}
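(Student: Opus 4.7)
The plan is to close the gap in Theorem~\ref{thm:dim} by sharpening both the lower bound (short by $p_2/2$) and the upper bound (long by $p_1/4$), so that they meet at $\dim G - \tfrac{1}{2}\dim U$. The central idea is to treat the two unstable root spaces $\mf g_\alpha$ and $\mf g_{2\alpha}$ separately, since $\Ad_a$ acts on them with distinct expansion rates, say $e^\lambda$ and $e^{2\lambda}$. A direction-sensitive relation between entropy and dimension is needed, because the crude bound that converts an entropy deficit into a dimension defect via a single Lyapunov exponent is too weak on one side and too strong on the other.

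For the upper bound, I would first approximate $\mc D$ by level sets $A_\eta$ consisting of those $x \in \homsp$ whose asymptotic sojourn fraction in a fixed exhausting family of compacta is at most $\eta$, and let $\eta \to 0$. On each $A_\eta$, take a Frostman-type measure of near-maximal dimension, average its iterates under $T$, and pass to a weak* limit, producing an $a$-invariant probability $\mu$ with $\dim\mu$ close to $\dim A_\eta$. The theorem from \cite{EKP} then forces $h_\mu \le (\tfrac{1}{2}+\mathrm{o}(1))h_m(T)$ as $\eta\to 0$. The improvement over Theorem~\ref{thm:dim} should come from replacing the single-exponent bound by a Ledrappier-Young-type decomposition of the dimension along $U$ as $h_1/\lambda + h_2/(2\lambda)$, where $h_1 + h_2 = h_\mu$ are partial entropies obeying the saturation constraints $h_1 \le p_1\lambda$ and $h_2 \le 2p_2\lambda$. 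Optimizing this linear program against $h_\mu = \tfrac12 h_m(T) = \tfrac12(p_1+2p_2)\lambda$, at the boundary where both constraints are simultaneously saturated, pins the dimension along $U$ at $\tfrac12(p_1+p_2)$, matching the conjecture.

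For the lower bound, I would refine the Cantor construction already yielding $\dim G - \tfrac12\dim U - p_2/2$. That construction prescribes long cusp excursions along $U_{2\alpha}$ only, which is wasteful in the fastest-expanding direction. Instead, I propose an iterated Cantor refinement simultaneously in $U_\alpha$ and $U_{2\alpha}$, with excursion depths balanced so that the time fraction outside compacta tends to one while the Ahlfors-regular dimensions contributed by each direction, controlled via the mass distribution principle, sum to precisely $\tfrac12(p_1+p_2)$. The two directions interact through commutators $[U_\alpha,U_\alpha]\subseteq U_{2\alpha}$, so the construction must be inductive, with the $U_{2\alpha}$-refinement absorbing commutator contributions rather than competing with them.

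The main obstacle is the Ledrappier-Young-type step in the noncompact homogeneous setting with cusps: standard statements assume compact ambient spaces or uniformly hyperbolic dynamics, neither of which applies here. I would pursue two routes in parallel. First, adapt the effective entropy arguments in \cite{EKP}, which already quantify entropy deficit due to cusp excursions, to yield pointwise partial-dimension bounds direction by direction. Second, bypass Ledrappier-Young by arguing directly with Bowen balls in the decomposition $G=KAU$, exploiting that $U$ is two-step nilpotent whose volume growth under $\Ad_a$ decouples cleanly as $p_1\lambda + 2p_2\lambda$ across the two weight spaces. If either route yields the sharp constant, the conjecture follows uniformly for all real rank-one $G$, with both correction terms disappearing for the same structural reason.
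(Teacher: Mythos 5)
This statement is a \emph{conjecture}: the paper does not prove it, and only establishes the two-sided estimates of Theorem~\ref{thm:dim} (with a gap of $p_2/2$ below and $p_1/4$ above) plus the special case of Theorem~\ref{thm:dimsp}. Your text is a research programme rather than a proof, and its central step contains a concrete error. Consider the linear program you set up for the upper bound: maximize the transverse dimension $h_1/\lambda + h_2/(2\lambda)$ subject to $h_1\le p_1\lambda$, $h_2\le 2p_2\lambda$ and $h_1+h_2\le \tfrac12 h_m(T)=\tfrac12(p_1+2p_2)\lambda$. Since the slow direction $\mf g_\alpha$ buys dimension at rate $1/\lambda$ versus $1/(2\lambda)$ for $\mf g_{2\alpha}$, the optimum concentrates entropy in $\mf g_\alpha$: when $p_2\ge p_1/2$ it gives $h_1=p_1\lambda$, $h_2=(p_2-\tfrac{p_1}{2})\lambda$ and transverse dimension $\tfrac{3p_1}{4}+\tfrac{p_2}{2}$, which is \emph{exactly} the $U$-part of the paper's existing upper bound $\dim G-\tfrac12\dim U+\tfrac{p_1}{4}$, not the conjectured $\tfrac12(p_1+p_2)$. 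The configuration you invoke, ``both constraints simultaneously saturated,'' forces $h_1+h_2=h_m(T)$ and is infeasible under the constraint $h_\mu\le\tfrac12 h_m(T)$. So a Ledrappier--Young decomposition alone cannot close the upper bound; one would additionally need to rule out measures with escaping mass whose surviving entropy is disproportionately carried by the $\mf g_\alpha$ direction, and no mechanism for that is offered. There is also an unaddressed normalization problem: for points diverging on average the Ces\`aro averages of any Frostman measure converge weak* to the zero measure, so the invariant limit to which you would apply Ledrappier--Young need not be a probability measure, and the entropy--dimension transfer must be done by a covering argument (as in Lemma~\ref{measureest} and \cite{Einsiedler_Kadyrov}) rather than by the variational principle.

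On the lower bound, your diagnosis is also off: the paper's construction (Proposition~\ref{prop:main}) already separates points in \emph{both} root directions, with $S=\lfloor e^{R/2}\rfloor^{p_2}\lfloor e^{R/4}\rfloor^{p_1}$ children per stage. The loss of $p_2/2$ occurs in Proposition~\ref{HDU}, where the mass distribution principle of Lemma~\ref{lem:Frostman} is applied with the \emph{diameter} of the sets $ua^{L}\overline B^U_{\eta_0/4}a^{-L}$, which is governed by the slow rate $e^{-L/2}$, while their Lebesgue measure decays like $e^{-Lh_m(T)}$; the pieces are anisotropic boxes and the ball-based Frostman lemma undercounts their dimension by $p_2/2$. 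Closing this gap requires a self-affine / anisotropic covering analysis, which your sketch does not supply. In short, the conjecture remains open and the proposal, as written, neither improves the upper bound nor repairs the lower one.
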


For the homogeneous spaces $\SL_{d+1}(\Z)\backslash \SL_{d+1}(\R)$, $d\geq 1$, and the action of a certain singular diagonal element of $\SL_{d+1}(\R)$, the analog of Theorem~\ref{thm:entropylowerbound} have been proven in \cite{Const}. For $d=2$, the Hausdorff dimension of the set of points which diverge on average in shown in \cite{Einsiedler_Kadyrov} to be $6+ 4/3$. 
\section{Preliminaries}\label{sec:prelims}

The Lie algebra $\mf g$ of the Lie group $G$ is the direct sum of a simple Lie algebra of rank $1$ and a compact one. The compact component does not have any influence on the dynamics considered here (cf.\@ \cite{EKP}). For this reason, we assume throughout that $\mf g$ is a simple Lie algebra of rank $1$ and, correspondingly, that $G$ is a connected simple Lie group of $\R$-rank $1$ with finite center. This allows us to work with a coordinate system for $G$ which is adapted to the dynamics, and $G$ can be realized as the isometry group of a Riemannian symmetric space of rank $1$ and noncompact type. For more background information on this coordinate system we refer to \cite{CDKR1,CDKR2}.

\textbf{Coordinate system.} Let $A$ be the maximal one-parameter subgroup of $G$ of diagonalizable elements which contains $a$, the chosen generator for the discrete geodesic flow $T$. Then there exists a group homomorphism $\alpha \colon A \to (\R_{>0},\cdot)$ such that $\alpha(a)>1$ and $\mf g$ decomposes into the direct sum
\begin{equation}\label{rootspaces}
 \mf g = \mf g_{-2} \oplus \mf g_{-1} \oplus \mf c \oplus \mf g_1 \oplus \mf g_2,
\end{equation}
where
\[
 \mf g_j \sceq \left\{ X \in \mf g \left\vert\ \forall\, \wt a\in A\colon \Ad_{\wt a}X = \alpha(\wt a)^{\frac{j}{2}}X \right.\right\},\quad j\in \{\pm 1,\pm 2\},
\]
and $\mf c$ is the Lie algebra of the centralizer $C=C_A(G)$ of $A$ in $G$. The homomorphism $\alpha$ is the square root of the ``group analog'' of the root $\alpha$ in the Introduction. If $\mf g$ is not isomorphic to $\mft so(1,n)$, $n\in \N$, the decomposition \eqref{rootspaces} is the restricted root space decomposition of $\mf g$. If $\mf g$ is isomorphic to $\mft so(1,n)$ for some $n\in \N$ (which is equivalent to say that $G$ consists of isometries of a real hyperbolic space), either $\mf g_1$ or $\mf g_2$ is trivial. In this case, both
\[
 \mf g = \mf g_{-1} \oplus \mf c \oplus \mf g_1 \quad\text{and}\quad \mf g=\mf g_{-2} \oplus \mf c \oplus \mf g_2
\]
are restricted root space decompositions of $\mf g$. The first one corresponds to the Cayley-Klein models of real hyperbolic spaces, the second one to the Poincar\'e models (see \cite{CDKR1,CDKR2}). In any case, let $\mf n \sceq \mf g_2 \oplus \mf g_1$ and let $N$ be the connected, simply connected Lie subgroup of $G$ with Lie algebra $\mf n$. Further pick a maximal compact subgroup $K$ of $G$ such that 
\[
 N\times A\times K \to G,\quad (n,\wt a, k) \mapsto n\wt ak \qquad\text{(Iwasawa decomposition)}
\]
is a diffeomorphism, and let 
\[
 M\sceq K\cap C.
\]
The semidirect product $NA$ is parametrized by
\[
 \R_{>0}\times \mf g_2 \times \mf g_1 \to NA, \quad (s,Z,X) \mapsto \exp(Z+X)\cdot a_s
\]
with $\alpha(a_s) = s$, $a_s\in A$. Let $\theta$ be a Cartan involution of $\mf g$ such that the Lie algebra $\mf k$ of $K$ is its $1$-eigenspace, and let $B$ denote the Killing form. Further let
\[
 p_1 \sceq \dim \mf g_1 \quad\text{and}\quad p_2 \sceq \dim \mf g_2.
\]
On $\mf n$ we define an inner product via
\[
 \langle X,Y\rangle \sceq -\frac{1}{p_1 + 4p_2} B(X,\theta Y) \qquad\text{for $X,Y\in\mf n$.}
\]
This specific normalization yields that the Lie algebra $[\cdot,\cdot]$ of $\mf g$, even though it is indefinite, satisfies the 
Cauchy-Schwarz inequality
\[
 |[X,Y]| \leq |X| |Y|
\]
for $X,Y\in\mf n$ (see \cite{Pohl_isofunddom}). We may identify $G/K \cong NA \cong \R_{>0} \times \mf g_2 \times \mf g_1$ with the space
\[
 D \sceq \left\{ (t,Z,X) \in \R \times \mf g_2 \times g_1 \left\vert\ t>\frac14|X|^2\right.\right\}
\]
via
\[
 \R_{>0} \times \mf g_2 \times \mf g_1 \to D,\quad (t,Z,X) \mapsto (t+\tfrac14|X|^2, Z, X).
\]
With the linear map $J\colon \mf g_2 \to \End(\mf g_1), Z\mapsto J_Z$,
\[
 \langle J_ZX,Y\rangle \sceq \langle Z, [X,Y]\rangle \quad\text{for all $X,Y\in \mf g_1$,}
\]
the geodesic inversion $\sigma$ of $D$ at the origin $(1,0,0)$ is given by (see \cite{CDKR2})
\begin{equation}\label{geodinv}
 \sigma(t,Z,X) = \frac{1}{t^2 + |Z|^2} \big(t, -Z, (-t+J_Z)X\big).
\end{equation}
We shall identify $\sigma$ with the element in $K$ with acts as in \eqref{geodinv}. Then $G$ has the Bruhat decomposition
\begin{equation}\label{bruhat}
  G = NAM \cup NAM\sigma N.
\end{equation}
To modify this Bruhat decomposition into one which is tailored to the dynamics on $\homsp$, we note the following result on fundamental domains of Siegel domain type. For $s>0$ let
\[
 A_s \sceq \{ a_t \in A \mid t>s\},
\]
and for any compact subset $\eta$ of $N$ define the Siegel set 
\[
 \Omega(s,\eta) \sceq \eta A_s K.
\]

\begin{prop}[Theorem~0.6 and 0.7 in \cite{Garland_Raghunathan}]\label{funddomGR}
There exists $s_0 > 0$, a compact subset $\eta_0$ of $N$ and a finite subset $\Xi$ of $G$ such that 
\begin{enumerate}[{\rm (i)}]
\item\label{funddomGRi} $G=\Gamma\Xi\Omega(s_0,\eta_0)$,
\item for all $\xi\in\Xi$, the group $\Gamma\cap \xi N \xi^{-1}$ is a cocompact lattice in $\xi N \xi^{-1}$,
\item for all compact subsets $\eta$ of $N$ the set
\[
 \{ \gamma \in \Gamma \mid \gamma\Xi\Omega(s_0,\eta) \cap \Omega(s_0,\eta) \not=\emptyset \}
\]
is finite,
\item\label{funddomGRiv} for each compact subset $\eta$ of $N$ containing $\eta_0$, there exists $s_1>s_0$ such that for all $\xi_1,\xi_2\in\Xi$ and all $\gamma\in\Gamma$ with $\gamma\xi_1 \Omega(s_0,\eta) \cap \xi_2\Omega(s_1,\eta) \not= \emptyset$ we have $\xi_1 = \xi_2$ and $\gamma\in \xi_1 NM \xi_1^{-1}$.
\end{enumerate}
\end{prop}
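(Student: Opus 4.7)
The plan is to apply the classical reduction theory for nonuniform lattices in real rank-one semisimple Lie groups, using as structural input the cusp finiteness for $\Gamma$ together with the horospherical geometry of $G/K$ encoded in the Iwasawa decomposition $G = NAK$ recalled above.

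First I would show that there are only finitely many $\Gamma$-conjugacy classes of proper parabolic subgroups of $G$ containing a lattice in their unipotent radical. Since $G$ has real rank one, every proper parabolic is conjugate to $P = MAN$, so this is equivalent to finiteness of the number of cusps of the locally symmetric space $\Gamma \backslash G / K$. The standard argument uses that $\Gamma$ has finite covolume together with the Margulis lemma: at each cusp one can cut off a nontrivial horoball of volume uniformly bounded below, and distinct cusps give disjoint such horoballs in $\Gamma\backslash G/K$, so only finitely many cusps can exist. Picking one representative $\xi$ from each class yields the required finite set $\Xi$. Property (ii) then follows from Mal'cev's theorem: by construction $\Gamma \cap \xi N \xi^{-1}$ is a lattice in the connected, simply connected nilpotent group $\xi N \xi^{-1}$, and such lattices are automatically cocompact.

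For (i), I would choose $\eta_0$ to be a compact subset of $N$ that, for each $\xi \in \Xi$, contains a fundamental domain in $N$ for the action of the cocompact lattice $\xi^{-1}(\Gamma \cap \xi N \xi^{-1})\xi$, which exists by (ii). The claim is that for $s_0$ large enough, $\Gamma \Xi \Omega(s_0,\eta_0)$ exhausts $G$. Given $x\in G$, one uses the thick-thin decomposition: either the projection of $x$ to $\Gamma \backslash G / K$ lies in a fixed compact core, which by finitely many $\Gamma$-translates is contained in the union of the Siegel sets, or it lies in the cuspidal neighborhood of some $\xi\in\Xi$, in which case writing $x = \gamma \xi n a_t k$ in Iwasawa coordinates with $t$ large allows one to adjust $n$ into $\eta_0$ by an element of $\Gamma \cap \xi N \xi^{-1}$. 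Property (iii) then follows from the discreteness of $\Gamma$ in $G$ and the precompactness of $\Xi \Omega(s_0,\eta)\Omega(s_0,\eta)^{-1}\Xi^{-1}$, which forces only finitely many $\gamma\in\Gamma$ to produce a nonempty intersection.

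For (iv), the geometric separation of Siegel sets at large height is the key input: for $\xi_1 \neq \xi_2$ in $\Xi$, the horoballs around the corresponding cusps become disjoint in $\Gamma \backslash G / K$ once one passes to $A_{s_1}$ with $s_1$ large, which forces $\xi_1 = \xi_2$. When $\xi_1 = \xi_2 = \xi$, any $\gamma\in\Gamma$ satisfying the intersection condition must preserve the horoball around the $\xi$-cusp, hence lie in the parabolic $\xi P \xi^{-1} = \xi NAM \xi^{-1}$; the Bruhat decomposition \eqref{bruhat} and discreteness then exclude a nontrivial $A$-component (which would expand or contract $N$-coordinates and violate Siegel containment), leaving $\gamma \in \xi NM \xi^{-1}$. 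The principal obstacle is the cusp finiteness underlying the construction of $\Xi$: it genuinely requires the finite covolume hypothesis together with a uniform lower bound on the volumes of horoball neighborhoods produced by the Margulis lemma. All remaining steps are then careful bookkeeping in Iwasawa coordinates and routine applications of discreteness.
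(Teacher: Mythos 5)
First, note that the paper does not prove this proposition at all: it is imported verbatim from Garland--Raghunathan (Theorems~0.6 and 0.7 of the cited work), so the ``paper's proof'' is a citation, and your attempt amounts to resketching classical rank-one reduction theory. Your overall route (finiteness of cusps via the Margulis lemma and disjoint horoballs of uniformly bounded-below volume, the Auslander/Mal'cev circle of results for lattices in the nilpotent group $\xi N\xi^{-1}$, a thick--thin covering argument for (i), and horoball separation plus exclusion of a nontrivial $A$-component for (iv)) is indeed the standard one and is broadly sound in outline.

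There is, however, a concrete gap in your argument for (iii). You claim finiteness of $\{\gamma\in\Gamma \mid \gamma\Xi\Omega(s_0,\eta)\cap\Omega(s_0,\eta)\neq\emptyset\}$ from ``discreteness of $\Gamma$ and precompactness of $\Xi\Omega(s_0,\eta)\Omega(s_0,\eta)^{-1}\Xi^{-1}$'', but a Siegel set $\Omega(s_0,\eta)=\eta A_{s_0}K$ is \emph{not} precompact: $A_{s_0}=\{a_t: t>s_0\}$ is an unbounded ray into the cusp, so the set whose precompactness you invoke is unbounded and the finiteness does not follow from discreteness alone. This finiteness is exactly the Siegel property, which requires a genuine argument: one must show that intersections $\gamma\xi_1\Omega\cap\xi_2\Omega$ occurring at large height force $\gamma$ into the cusp stabilizer $\xi NM\xi^{-1}$ (essentially the content of (iv)), where finiteness then comes from the cocompact lattice $\Gamma\cap\xi N\xi^{-1}$ and the compactness of $\eta$, while the bounded-height part of the Siegel sets is handled by an honest compactness argument. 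A second, smaller issue: you obtain (ii) ``by construction'' by selecting only those parabolics whose unipotent radical meets $\Gamma$ in a lattice, but then the covering statement (i) silently requires that \emph{every} unbounded end of $\Gamma\backslash G/K$ arises from such a parabolic; this needs the Auslander/Bieberbach-type step (the cusp stabilizer sits in $\xi NM\xi^{-1}$ with $M$ compact, acts cocompactly on the horosphere, and hence meets $\xi N\xi^{-1}$ in a lattice), which your sketch does not supply.
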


Throughout we fix a choice for $\eta_0$, $s_1$ (with $\eta=\eta_0$) and $\Xi$. The elements of $\Xi$ are representatives for the cusps of $\homsp$ (and will also be called cusps). Note that $U=\sigma N \sigma$. Multiplying \eqref{bruhat} with $\xi\in\Xi$ from the left and $\sigma$ from the right yields
\[
 G = \xi NAM\sigma \cup \xi NAMU.
\]
We may assume throughout that $a$ is chosen such that 
\[
 \alpha(a) = e,  \qquad (e=\exp(1))
\]
letting $T$ result in the time-one geodesic flow. By scaling, the statements of Theorem~\ref{thm:entropylowerbound}-\ref{thm:dimsp} are valid for a generic $a$ if proven in this particular case.
The subgroup $U$ is just the unstable subgroup with respect to $a$, and the conjugation of $\sigma(1,Z,X)\sigma \in U$ by $a$ is given by
\[
 a^{-k}\sigma (1,Z,X)\sigma a^k = \sigma (1, e^{-k}Z, e^{-k/2}X)\sigma \qquad (k\in\Z).
\]

\textbf{Maximal entropy.} The maximal metric entropy of the time-one geodesic flow $T$ is 
\[
 h_m(T) = \frac{p_1}{2} + p_2.
\]
It is uniquely realized by the normalized Haar measure on $\homsp$, which we denote by $m$.

\textbf{The height function and an improved choice of $s_1$.} In the following we recall the definition of the height function on $\homsp$ from \cite{EKP} and its significant properties. For any $\xi\in \Xi$ consider the $\xi$-Iwasawa decomposition $G=\xi NAK$. For $g\in G$ let $s=s_\xi(g) > 0$ be such that $g=\xi n a_s k$ for some $n\in N$, $k\in K$. For $x\in \homsp$, its $\xi$-height is
\[
 \height_\xi(x) = \sup\{ s_{\xi}(g) \mid \Gamma g = x\}.
\]
Its height is 
\[
 \height(x) = \max\{ \height_\xi(x) \mid \xi \in \Xi\}.
\]
For $s>0$ we set
\[
\homsp_{<s}=\{x \in \mc X : \height(x) <s\} \quad\text{and}\quad \homsp_{\geq s}=\{x \in \mc X : \height(x) \ge s\}.
\]
The constant $s_1$ in Proposition~\ref{funddomGR} can be chosen such that 
\begin{enumerate}[(i)]
\item if for $x\in\homsp$ and $\xi\in\Xi$, we have $\height_\xi(x)>s_1$, then $\height(x) = \height_\xi(x)$, 
\item if for $x\in\homsp$, we have $\height(x)>s_1$ and $\height(x)>\height(xa)$, then the $T$-orbit of $x$ strictly descends below height $s_1$ before it can rise again. This means that there exists $n\in\N$ such that for $j=0,\ldots, n-1$, we have $\height(xa^j)>\height(xa^{j+1})$ and $\height(xa^n)\leq s_1$, and
\item if $x\in \homsp$ and $\height_\xi(x)>s_1$ for some $\xi\in\Xi$, then there is (at least one) element $g=\xi na_rmu \in \xi NAMU$ or $g=\xi na_rm\sigma\in \xi NAM\sigma$ which realizes $\height_\xi(x)$. That is, $x=\Gamma g$ and $\height_\xi(x) = s_\xi(g)$. The components $a_r$ and $u$ do not depend on the choice of $g$.
\end{enumerate}
We suppose from now on that $s_1$ satisfies these properties.

For points $x\in\homsp$ which are high in some cusp, we have the following explicit formulas for the calculation of the height of the initial part of its orbit.

\begin{prop}[\cite{EKP}]
Let $x\in\homsp$, $\xi\in\Xi$ and suppose that $\height_\xi(xa^k)>s_1$ for all $k\in\{0,\ldots, n\}$. 
\begin{enumerate}[{\rm (i)}]
\item If $\height_\xi(x)$ is realized by $g=\xi n a_r m \sigma \in \xi NAM\sigma$, then 
\[
 \height_\xi(xa^k) = re^{-k}.
\]
\item If $\height_\xi(x)$ is realized by $g=\xi n a_r m u \in \xi NAMU$ with $u=\sigma (1, Z, X)\sigma$, then 
\[
 \height_\xi(xa^k) = r \frac{e^{-k}}{\left( e^{-k} + \frac14|X|^2\right)^2 + |Z|^2}.
\]
\end{enumerate}

\end{prop}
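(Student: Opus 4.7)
The plan is to compute $s_\xi(ga^k)$ (the Iwasawa $\xi$-$A$-component of $ga^k$) directly using the identification $G/K \cong D$, and then identify this with $\height_\xi(xa^k)$ via uniqueness of the realizing Bruhat representative. The key observation is that for $h \in G$ with Iwasawa decomposition $h = \exp(Z'+X')a_s k$, the point $h\cdot e_0 \in D$ (with $e_0 = (1,0,0)$) has coordinates $(s+|X'|^2/4, Z', X')$, so the $A$-parameter is $s = t(h\cdot e_0) - |X(h\cdot e_0)|^2/4$. This reduces the problem to computing $\xi^{-1}ga^k \cdot e_0 \in D$.

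Case (i) will be almost immediate: using $\sigma a \sigma = a^{-1}$ and that $m \in M$ commutes with $A$, one gets $ga^k = \xi n a_{re^{-k}} m\sigma$; since $m\sigma \in K$ this is already in $\xi NAK$ form and $s_\xi(ga^k) = re^{-k}$. For case (ii), I will compute $\xi^{-1}ga^k \cdot e_0$ step by step, starting from $a^k \cdot e_0 = (e^k,0,0)$ and then applying $u = \sigma \exp(Z+X)\sigma$ via formula \eqref{geodinv}: this produces $\bigl(t_k, -Z, (-t_k + J_Z)X\bigr)/D_k$ with $t_k = e^{-k}+|X|^2/4$ and $D_k = t_k^2 + |Z|^2$. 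Next I apply $m$ (an isometric rotation preserving $t$), $a_r$ (scaling each coordinate by the relevant weight of $A$), and finally $n$ (a translation in $N$). When I extract the $A$-parameter via $s = t - |X|^2/4$, the contribution of $n$ cancels exactly, so the answer is independent of $n$. Combining the orthogonality $\langle X, J_Z X\rangle = 0$ (immediate from the definition of $J_Z$) with the identity $|J_Z X|^2 = |Z|^2 |X|^2$, which is valid in every rank-$1$ symmetric space of noncompact type (cf.\@ \cite{CDKR1,CDKR2}), the expression will collapse to $re^{-k}/D_k$.

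Finally, to identify $s_\xi(ga^k)$ with $\height_\xi(xa^k)$, I will use the hypothesis $\height_\xi(xa^k) > s_1$: any other representative realizing the height differs from $ga^k$ by a $\Gamma$-element that must lie in $\xi NM \xi^{-1}$ by Proposition~\ref{funddomGR}\,(iv), and left-multiplication by such an element does not affect the Iwasawa $A$-component. The main obstacle will be the bookkeeping in case (ii): carrying the $|X|^2/4$ correction consistently through the four successive actions on $D$, verifying the cancellation of the $n$-dependence, and applying the identity $|J_Z X|^2 = |Z|^2 |X|^2$ at the right step. Without this last identity one would be left with $|J_Z X|^2$ rather than $|Z|^2|X|^2$ in the denominator, and the advertised closed form would not hold.
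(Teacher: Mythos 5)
Your computational core is sound and is essentially the coordinate-model computation behind the cited formula from \cite{EKP}: $a^k$ sends the origin of $D$ to $(e^k,0,0)$, one application of \eqref{geodinv} gives $(e^{-k},0,0)$, left multiplication by $\exp(Z+X)$ gives the point with $t_k=e^{-k}+\tfrac14|X|^2$, a second application of \eqref{geodinv} yields $\tfrac1{D_k}\big(t_k,-Z,(-t_k+J_Z)X\big)$ with $D_k=t_k^2+|Z|^2$, and using $\langle X,J_ZX\rangle=0$ together with $J_Z^2=-|Z|^2\,\mathrm{id}$ (the $J^2$/H-type identity, which is exactly what the normalization $\langle X,Y\rangle=-\tfrac1{p_1+4p_2}B(X,\theta Y)$ guarantees) the $A$-parameter $t-\tfrac14|X|^2$ collapses to $e^{-k}/D_k$; the subsequent $M$-, $a_r$- and $N$-actions contribute the factor $r$ and nothing else, and case (i) is immediate from $\sigma a^k=a^{-k}\sigma$ modulo $K$.

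The gap is in the final identification $s_\xi(ga^k)=\height_\xi(xa^k)$. You apply Proposition~\ref{funddomGR}\,(iv) directly at time $k$, but (iv) needs the two $\Gamma$-equivalent points to lie in $\xi\Omega(s_0,\eta)$ and $\xi\Omega(s_1,\eta)$ respectively, and for the specific representative $ga^k$ you do not know a priori that $s_\xi(ga^k)>s_0$ --- that is essentially what is being proved, since a priori $s_\xi(ga^k)$ decays with $k$ and could drop below $s_0$. A telltale sign is that your per-$k$ argument never uses the hypothesis that $\height_\xi(xa^j)>s_1$ for \emph{all} intermediate $j$; without that hypothesis the conclusion is false (once the orbit descends below $s_1$ and re-enters the cusp, the height at time $k$ is realized by a different $\Gamma$-translate, while $s_\xi(ga^k)$ is tiny). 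The repair is an induction along the orbit: assuming $s_\xi(ga^k)=\height_\xi(xa^k)>s_1$, the bounded one-step change of the Iwasawa component (or the descent property (ii) in the choice of $s_1$) keeps $ga^{k+1}$ inside a Siegel set $\xi\Omega(s_0,\eta)$ after translating its $N$-component into $\eta$ by an element of the cocompact lattice $\Gamma\cap\xi N\xi^{-1}$ (which only alters the relating $\gamma$ within $\xi NM\xi^{-1}$); then (iv) applies to the $\gamma$ carrying $ga^{k+1}$ to a representative realizing $\height_\xi(xa^{k+1})$, and since left multiplication by $\xi NM\xi^{-1}$ preserves the $A$- and $U$-components, the realization propagates to time $k+1$. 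With this propagation step added (and $s_1$ taken large enough relative to $s_0$ to absorb the one-step variation, which the freedom in choosing $s_1$ allows), your argument is complete.
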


\textbf{Riemannian metric on $G$ and metric on $\homsp$.}
The isomorphism $\mf n=\mf g_2 \times \mf g_1 \to N$, $(Z,X)\mapsto \exp(Z+X)$, induces the inner product of $\mf n$ to $N$. Using the isomorphism $N\to U$, $n\mapsto \sigma n\sigma$, it gets further induced to $U$, and hence to $\overline{\mf n} \sceq \mf g_{-2}\times \mf g_{-1}$. 

We pick a left $G$-invariant Riemannian metric on $G$, which on the tangent space $T_1G \cong \mf g$ reproduces the inner products on $\mf n$ and $\overline{\mf n}$. Let $d_G$ denote the induced left-$G$-invariant metric on $G$. For $r>0$ let $B_r^G$, $B_r^U$, resp.\@ $B_r^{NAM}$ denote the $r$-balls in $G$, $U$, resp.\@ $NAM$ around $1\in G$.
We define
\begin{align*}
\lambda_0 & \sceq \min\{ |\lambda| \mid \text{$\lambda$ is an eigenvalue of $\Ad_{a}$ with $|\lambda|>1$}\}.
\end{align*}
Thus, 
\[
\lambda_0 =
\begin{cases}
e & \text{if $\mf g_1=\{0\}$ (and hence $G/K$ is a real hyperbolic space),}
\\
e^{1/2} & \text{otherwise.}
\end{cases}
\]
Then for any $L\geq 0$ we have
\[
 a^{L}B^U_r a^{-L} \subseteq B^U_{\lambda_0^{-L}r}
\]
or, in other words,
\[
d(ua^{-L}, va^{-L}) \leq \lambda_0^{-L} d(u,v) \leq d(u,v)
\]
for $u,v\in U$. Further
\[ 
 c \max\{ |Z|, |X| \} \leq d_G(1, \sigma (1,Z,X)\sigma)
\]
for some constant $c>0$ and all $u=\sigma (1,Z,X)\sigma \in U$. We avoid overly use of global constants, we may assume that $c=1$. 
The induced metric $d_{\homsp}$ on $\homsp$ is given by
\[
 d_\homsp (x,y) \sceq \min\{ d_G(g,h)\mid x=\Gamma g,\ y=\Gamma h\}.
\]
We usually omit the subscripts of $d_G$ and $d_\homsp$.

Finally, to shorten notation, we use 
\[
 [0,n] \sceq \{0,\ldots,n\}
\]
for $n\in\N$. The context will always clarify whether $[0,n]$ refers to this discrete interval or a standard interval in $\R$..
\section{Upper bound on Hausdorff dimension}\label{sec:upper}

Recall that
\[
 \mc D = \{ x\in \homsp \mid \text{$x$ diverges on average}\}.
\]

\begin{thm}\label{upper2}
The Hausdorff dimension of $\mc D$ is bounded from above by
\begin{equation}\label{case1}\tag{i}
 \dim \mc D \leq \dim G - \frac12\dim U + \frac{p_1}{4}.
\end{equation}
If $p_2=0$, then 
\begin{equation}\label{case2}\tag{ii}
 \dim \mc D \leq \dim G - \frac12\dim U.
\end{equation}
\end{thm}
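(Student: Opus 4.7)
The plan is to stratify $\mc D$ according to the speed of escape into the cusps, cover each stratum by dynamical Bowen balls whose count is controlled via the entropy bound \eqref{mastereq}, and refine each Bowen ball into small Hausdorff balls at the scale of the tightest unstable contraction.

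For $s > s_1$, $\eps \in (0,1)$, and $N \in \N$, set
\[
 \mc D_{s,\eps,N} \sceq \bigl\{x \in \homsp : |\{i \in [0,n-1] : \height(T^ix) \geq s\}| \geq (1-\eps)n \text{ for all } n \geq N\bigr\}.
\]
Then $\mc D \subseteq \bigcup_N \mc D_{s,\eps,N}$ for every $s > s_1$ and $\eps > 0$, so by countable stability of Hausdorff dimension it suffices to establish uniform upper bounds on $\dim \mc D_{s,\eps,N}$ and then let $\eps \to 0$ and $s \to \infty$. Fix $r > 0$ smaller than the injectivity radius of $\homsp_{\leq s_1}$, and for each $n \geq N$ pick a maximal $(n,r)$-separated subset $\mc S_n \subseteq \mc D_{s,\eps,N}$; then $\mc D_{s,\eps,N} \subseteq \bigcup_{x \in \mc S_n} B_n(x,r)$.

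The first key step is the upper bound
\[
 \limsup_{s\to\infty}\limsup_{n\to\infty}\tfrac{1}{n}\log|\mc S_n| \leq \tfrac12 h_m(T) + O(\eps).
\]
Form the Ces\`aro-averaged empirical measures $\tilde\mu_n = \frac{1}{n|\mc S_n|}\sum_{x\in\mc S_n}\sum_{j=0}^{n-1}\delta_{T^j x}$. By construction $\tilde\mu_n(\homsp_{<s}) \leq \eps$, so along a diagonal $n_s\to\infty$, $s\to\infty$ the measures $\tilde\mu_{n_s}$ weak-* accumulate to a $T$-invariant sub-probability measure $\nu$ with $\nu(\homsp) = 0$. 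A Misiurewicz-type argument, adapted to the noncompact setting by restricting to the sublevel $\homsp_{\leq s}$ before extracting the weak-* limit and tracking the escaping mass separately, combined with \eqref{mastereq}, produces the factor $\tfrac12$. Independently, by the root decomposition \eqref{rootspaces} and the eigenvalues $e^{-j/2}$ of $\Ad_{a^{-1}}$ on $\mf g_j$, each Bowen ball $B_n(x,r)$ fits inside a box of radius $r$ in the neutral and stable directions, radius $re^{-n/2}$ in $\mf g_{-1}$, and radius $re^{-n}$ in $\mf g_{-2}$.

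In case \eqref{case1} we subdivide at the smallest scale $\rho = re^{-n}$, covering each Bowen ball by $\lesssim e^{n[(\dim G - \dim U) + p_1/2]}$ balls of radius $\rho$. Summing over $\mc S_n$, the $d$-dimensional Hausdorff mass estimate
\[
 \sum_k \rho^d \lesssim |\mc S_n| \cdot e^{n[(\dim G - \dim U) + p_1/2 - d]}\, r^d
\]
tends to $0$ whenever $d > \dim G - \dim U + p_1/2 + \tfrac12 h_m(T) + O(\eps)$; substituting $h_m(T) = p_1/2 + p_2$ and letting $\eps \to 0$ simplifies this to $d > \dim G - \tfrac12 \dim U + \tfrac{p_1}{4}$, which is \eqref{case1}. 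In case \eqref{case2} the direction $\mf g_{-2}$ is absent, so every unstable direction in $B_n(x,r)$ has thickness at most $re^{-n/2}$; choosing $\rho = re^{-n/2}$ each Bowen ball requires $\lesssim e^{n(\dim G - \dim U)/2}$ subballs, and the analogous computation yields the sharper threshold $d > \dim G - \tfrac12 \dim U$ required by \eqref{case2}.

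The main obstacle is the Misiurewicz-type step: in the noncompact escape-of-mass regime the Ces\`aro averages $\tilde\mu_n$ converge only to a sub-probability measure, so the standard compact-space variational bound has to be replaced by an argument that first controls the escaping mass via \eqref{mastereq} and only then extracts the half-entropy bound. The remaining ingredients---the Bowen-ball geometry, the contraction rates of $\Ad_{a^{-1}}$ on $\mf g_{-1}\oplus\mf g_{-2}$, and the optimisation over the subdivision scale $\rho$---are routine consequences of the coordinate system set up in Section \ref{sec:prelims}.
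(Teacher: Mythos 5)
Your covering geometry and arithmetic are fine: the stratification of $\mc D$ by the sets $\mc D_{s,\eps,N}$, the description of a Bowen ball as a box of size $r$ in the $NAM$ directions, $re^{-n/2}$ in $\mf g_{-1}$ and $re^{-n}$ in $\mf g_{-2}$, and the optimisation over the subdivision scale all reproduce exactly the role played by Lemma~\ref{measureest} in the paper (the paper runs the argument in the contrapositive, Frostman-measure direction following Einsiedler--Kadyrov, but that difference is cosmetic). The problem is the step you yourself flag as the main obstacle: the bound $\limsup_n \tfrac1n\log|\mc S_n| \leq \tfrac12 h_m(T)+O(\eps)$ for $(n,r)$-separated subsets of $\mc D_{s,\eps,N}$ does not follow from \eqref{mastereq} by a Misiurewicz-type argument. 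The inequality \eqref{mastereq} is a statement about metric entropies of $T$-invariant probability measures; the Misiurewicz construction converts separated-set counts into the entropy of a weak* limit of Ces\`aro-averaged empirical measures only when that limit is a probability measure, i.e.\ precisely when no mass escapes. On $\mc D_{s,\eps,N}$ all the mass escapes (your $\nu$ has $\nu(\homsp)\to 0$ as $\eps\to 0$, $s\to\infty$), so the variational-principle machinery yields no upper bound on $|\mc S_n|$ at all, and \eqref{mastereq} applied to the (non-invariant, escaping) measures $\tilde\mu_n$ is not a licensed move. Saying one should ``first control the escaping mass via \eqref{mastereq} and only then extract the half-entropy bound'' is circular: \eqref{mastereq} controls entropy of invariant measures in terms of escaping mass, not counting of orbit segments of non-generic points.

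The missing ingredient is a direct covering estimate for orbit segments that spend a $(1-\eps)$-fraction of their time above height $s$: such segments can be covered by at most roughly $e^{(\frac12 h_m(T)+O(\eps))n}$ Bowen $n$-boxes. This is exactly what Lemmas~8.5 and 8.6 of \cite{EKP} provide, and it is proved there by analysing cusp excursions through the explicit height formulas of Section~\ref{sec:prelims} (during an excursion the unstable coordinates of the orbit are constrained by the height evolution, so only about half of the a priori unstable expansion produces distinguishable orbit segments). Logically this counting is upstream of \eqref{mastereq} -- the entropy-versus-escape inequality is derived from it, not conversely -- so your proposal, as written, assumes the hard part. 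If you replace your Misiurewicz step by an appeal to (or a reproof of) those covering lemmas, the rest of your direct Hausdorff-measure computation goes through and gives \eqref{case1} and, using the improved contraction when $p_2=0$, \eqref{case2}, in essentially the same way the paper combines \cite{Einsiedler_Kadyrov}, \cite{EKP} and Lemma~\ref{measureest}.
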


The proof of this theorem builds on Lemma~\ref{measureest} below, which easily follows from the contraction rate of the unstable direction under the action of $a$.

\begin{lemma}\label{measureest}
Let $\mu$ be a probability measure on $\homsp$ of dimension at most $\beta$. Then, for any $r>0$, any $x\in\homsp$ and any $L\in\N$ we have
\[
\mu(xa^LB_r^{U}a^{-L}B_r^{NAM}) \leq c r^{\beta} e^{\left(\dim NAM + \frac{p_1}{2} - \beta\right)L}.
\]
If $p_2=0$, this bound can be improved to
\[
 \mu(xa^LB_r^{U}a^{-L}B_r^{NAM}) \leq c r^{\beta} e^{\left(\dim NAM - \beta\right)\frac{L}{2}}.
\]
Here, $c$ is a constant only depending on $\mu$.
\end{lemma}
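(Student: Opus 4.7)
The approach I would take is a direct covering argument that exploits the explicit action of $\Ad_{a^L}$ on the unstable subalgebra, combined with the Frostman-type dimension hypothesis on $\mu$. From the coordinate formula
\[
 a^L \sigma(1,Z,X) \sigma a^{-L} = \sigma(1, e^L Z, e^{L/2} X) \sigma
\]
recalled above, conjugation by $a^L$ stretches $B_r^U$ anisotropically: by a factor $e^L$ along the $p_2$-dimensional $\mf g_{-2}$-direction and by $e^{L/2}$ along the $p_1$-dimensional $\mf g_{-1}$-direction. So I would view $a^L B_r^U a^{-L}$ as a box in the Euclidean $(Z,X)$-coordinates of $U$ of extent $e^L r$ in $\mf g_{-2}$ and $e^{L/2} r$ in $\mf g_{-1}$.

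First I would cover this stretched box by translates $u_i B_r^U$ of the unit-scale unstable ball. Because Haar measure on $U$ is Euclidean in the $(Z,X)$-coordinates and the Riemannian ball $B_r^U$ is comparable to a Euclidean box of side $r$, a volume count shows that $\lesssim e^{(p_2 + p_1/2)L}$ translates suffice in general, and only $\lesssim e^{p_1 L/2}$ when $p_2 = 0$. Since $B_r^U \cdot B_r^{NAM}$ has Riemannian diameter $O(r)$ in $G$, each translate $xu_i B_r^U B_r^{NAM}$ sits inside an $O(r)$-ball in $\homsp$ (for $r$ below the injectivity radius), whose $\mu$-mass is at most $c r^\beta$ by the dimension hypothesis. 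Multiplying gives the crude estimate $\mu(xa^L B_r^U a^{-L} B_r^{NAM}) \leq c' r^\beta e^{(p_2 + p_1/2)L}$ in general, and $c' r^\beta e^{p_1 L/2}$ for $p_2 = 0$.

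To sharpen this to the stated exponent $\dim NAM + p_1/2 - \beta$ (respectively $(\dim NAM - \beta)/2$ in the $p_2 = 0$ case), I would optimize the covering scale against $\beta$: rather than fixing the ball radius at $r$, I would use balls $B_s^G$ with $r \leq s \leq e^L r$, so that the per-ball mass $c s^\beta$ grows with $s$ while the number of balls needed to cover the stretched box decreases. The trade-off depends on which of the two Lyapunov directions $s$ exceeds, and the optimum, taken over $s$, reproduces the stated anisotropic exponent; in particular the summand $p_1/2$ records the contribution of the slow $\mf g_{-1}$-direction at its half Lyapunov weight. When $p_2 = 0$ the $\mf g_{-2}$-direction disappears, only the weaker expansion $e^{L/2}$ contributes, and the optimization is one-dimensional, yielding the improved halving.

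The main obstacle will be to execute Step~3 so that the balance of ``number of covering balls'' against ``per-ball Frostman mass $c s^\beta$'' produces exactly the exponent in the statement and not merely a weaker variant. Practically this is elementary optimization once the Lyapunov-graded covering count is pinned down, but the bookkeeping of the two expansion rates $e^L$ and $e^{L/2}$ against a single Frostman exponent $\beta$ must be done carefully. A secondary nuisance is the passage from Riemannian balls in $G$ to balls in $\homsp$: one has to restrict to $r$ below the injectivity radius, which is harmless in the application because the subsequent use of the estimate in Theorem~\ref{upper2} will take $r$ arbitrarily small.
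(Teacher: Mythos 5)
Your argument goes wrong at the very first step, and the gap is not repairable by the optimization you defer to your final step. The set in Lemma~\ref{measureest} is a forward Bowen ball, not a stretched box: by the definition of $U$ as the unstable subgroup ($a^nua^{-n}\to 1$ as $n\to\infty$) and the estimate recorded in the preliminaries, $a^LB_r^Ua^{-L}\subseteq B^U_{\lambda_0^{-L}r}$, conjugation by the \emph{positive} power $a^L$ contracts $B_r^U$, by $e^{-L}$ in the $p_2$ directions and $e^{-L/2}$ in the $p_1$ directions. (The displayed coordinate formula $a^{-k}\sigma(1,Z,X)\sigma a^k=\sigma(1,e^{-k}Z,e^{-k/2}X)\sigma$ that you quote is, as printed, inconsistent with both of these statements and should be read with $k$ and $-k$ interchanged; the contraction reading is the one consistent with the definition of $U$, with the separation estimates in the proof of Proposition~\ref{prop:main}, and with the role of the lemma in the proof of Theorem~\ref{upper2}.) With your expansion reading the asserted inequality is actually false: $a^LB_r^Ua^{-L}$ would contain $B_r^U$, so $xa^LB_r^Ua^{-L}B_r^{NAM}$ would contain a neighborhood of $x$ of size comparable to $r$; taking for $\mu$ the normalized restriction of Haar measure to a closed ball (which satisfies $\mu(B(y,\rho))\le c\rho^{\beta}$ with $\beta=\dim G>\dim NAM+\tfrac{p_1}{2}$) and $x$ an interior point of its support, the left-hand side is bounded below independently of $L$, while the claimed right-hand side tends to $0$ as $L\to\infty$. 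For the same reason no choice of covering scale $s\in[r,e^Lr]$ can rescue your Step 3: under your reading the measure of the set need not decay in $L$ at all, whereas the stated bound does decay precisely in the regime $\beta>\dim NAM+\tfrac{p_1}{2}$ that is needed for the application.

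The intended proof is the covering argument you set up, but applied to the contracted box, and it requires no optimization: $xa^LB_r^Ua^{-L}B_r^{NAM}$ is contained in the image of a box with $p_2$ sides of length $\asymp re^{-L}$, $p_1$ sides of length $\asymp re^{-L/2}$, and $\dim NAM$ sides of length $\asymp r$. Cover it by $O\big(e^{(\dim NAM+\frac{p_1}{2})L}\big)$ metric balls of radius $re^{-L}$ and apply the dimension hypothesis $\mu(B(y,\rho))\le c\rho^{\beta}$ to each ball; the factor $e^{-\beta L}$ in the resulting bound $c\,r^{\beta}e^{(\dim NAM+\frac{p_1}{2}-\beta)L}$ comes from the smallness of the covering balls, not from balancing scales. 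When $p_2=0$ the only contraction rate is $e^{-L/2}$, so one covers at scale $re^{-L/2}$ with $O\big(e^{\frac{L}{2}\dim NAM}\big)$ balls and obtains the improved exponent $(\dim NAM-\beta)\tfrac{L}{2}$. This is exactly the one-line argument the paper alludes to when it says the lemma "follows from the contraction rate of the unstable direction under the action of $a$."
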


\begin{proof}[Proof of Theorem~\ref{upper2}]
The claimed bound on the Hausdorff dimension of $\mc D$ follows as Theorem~1.4 and Corollary~1.5 in \cite{Einsiedler_Kadyrov}, using Lemmas~8.5 and 8.6 in \cite{EKP} as well as Lemma~\ref{measureest}.
\end{proof}

\section{Lower bound on Hausdorff dimension}\label{sec:lower}

In this section we prove the following lower bound on Hausdorff dimension:

\begin{thm}\label{thm:lower}
The Hausdorff dimension of the set of points in $\homsp$ which diverge on average is at least 
\[
 \dim G - \frac12 \dim U - \frac{p_2}2.
\]
\end{thm}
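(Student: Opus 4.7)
The plan is to construct a Cantor-type subset $\mc C\subseteq\mc D$ of Hausdorff dimension at least $\dim NAM + p_1/2 = \dim G - \tfrac12\dim U - \tfrac{p_2}{2}$. The construction is an inductive procedure modeled on the one of Einsiedler--Kadyrov \cite{Einsiedler_Kadyrov} for $\SL_{d+1}(\Z)\backslash\SL_{d+1}(\R)$, suitably adapted to the non-abelian unstable subgroup $U=\exp(\mf g_{-2}\oplus\mf g_{-1})$ arising here.

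First, I fix a cusp $\xi\in\Xi$ and choose strictly increasing sequences $(T_k)_{k\in\N}$ and $(r_k)_{k\in\N}$ of positive reals with $T_k,r_k\to\infty$, $T_{k+1}-T_k\sim 2\log r_{k+1}$ and $(T_{k+1}-T_k)/T_{k+1}\to 0$. These will be the prescribed times and heights of successive cusp excursions: any orbit that makes an ascent to height of order $r_k$ at time $T_k$ and returns to the thick part just in time for the next excursion spends a fraction tending to $1$ of its time above level $s_1$, and so diverges on average.

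Second, I build inductively a nested sequence $\mc C_0\supseteq\mc C_1\supseteq\cdots$ of compact subsets of $\homsp$ and set $\mc C\sceq\bigcap_k\mc C_k$. Each $\mc C_k$ is a finite disjoint union of boxes
\[
y_{k,i}\cdot B_\rho^{NAM}\cdot\exp(V_k),
\]
where $y_{k,i}\in\homsp$ is a centre point, $B_\rho^{NAM}$ is a fixed small ball in the neutral-and-stable direction $NAM$ (uniformly unexpanded under the forward flow, hence contributing the full $\dim NAM$ to the Hausdorff dimension), and $V_k\subseteq\mf g_{-1}$ is a ball of radius $\delta_k\asymp e^{-T_k/2}$, the natural time-$T_k$ Bowen scale for the $\mf g_{-1}$-direction on which $\Ad_a$ acts by multiplication with $e^{1/2}$. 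The $\mf g_{-2}$-coordinate of each box is held fixed throughout the construction: the faster $\Ad_a$-expansion rate $e$ on $\mf g_{-2}$ would force too rapid a shrinkage of the Cantor boxes in that direction compared with the branching it provides, so perturbations in $\mf g_{-2}$ contribute nothing to this lower bound. The refinement from $\mc C_k$ to $\mc C_{k+1}$ consists of selecting, inside each stage-$k$ box, a disjoint family of stage-$(k+1)$ sub-boxes whose centre points produce the prescribed $(k+1)$-th cusp excursion to height $\asymp r_{k+1}$ at time $T_{k+1}$. Feasibility and disjointness are verified using the explicit height formulas from Section~\ref{sec:prelims} together with the cusp description of Proposition~\ref{funddomGR}.

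Third, I apply the mass-distribution principle to the natural probability measure on $\mc C$ that assigns equal mass to each box at every stage. The quadratic dependence of $\height_\xi(xa^k)$ on the $\mf g_{-1}$-coordinate $|X|$ (via the term $(e^{-k}+|X|^2/4)^2$ in the height formula) --- in contrast to the linear dependence on $|Z|$ and to the exponential $e^{1/2}$-expansion of $\Ad_a$ on $\mf g_{-1}$ --- is precisely what forces the resulting Hausdorff dimension in the $\mf g_{-1}$-direction to be $p_1/2$ rather than $p_1$. A Frostman-type estimate then yields
\[
\dim\mc C\ge\dim NAM+\frac{p_1}{2}=\dim G-\frac12\dim U-\frac{p_2}{2},
\]
as required.

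The main obstacle will be the precise combinatorial/geometric count at the inductive step: one must carefully track the non-linear dependence of the height formula on the $\mf g_{-1}$-perturbation and verify both the disjointness of the stage-$(k+1)$ sub-boxes and the lower bound on their number inside each stage-$k$ box. The coupling between the bounded $NAM$-perturbations and the shrinking $\mf g_{-1}$-perturbations across successive cusp excursions also requires careful handling, to ensure that the constructed orbits do indeed realise the prescribed itineraries.
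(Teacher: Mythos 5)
Your overall strategy is the same as the paper's (prescribed, ever-longer cusp excursions; a full $NAM$-ball factor; branching in the unstable direction at ``half rate'' because the height formula is quadratic in $|X|$; then a Frostman/mass-distribution estimate), and your dimension arithmetic $\dim NAM+\tfrac{p_1}{2}=\dim G-\tfrac12\dim U-\tfrac{p_2}{2}$ is correct. The variant you propose --- branching only in $\mf g_{-1}$ with anisotropic boxes and a direct mass-distribution principle, instead of the paper's branching in both $\mf g_1$ and $\mf g_2$ fed into the Kleinbock--Margulis strongly-tree-like lemma with Lebesgue densities on all of $U$ --- is a legitimate alternative bookkeeping. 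But as written there is a genuine gap at the heart of the argument: you give no mechanism for \emph{realizing} the prescribed $(k+1)$-st excursion inside each stage-$k$ box. The explicit height formulas and Proposition~\ref{funddomGR} only control a single excursion relative to a fixed cusp representative; they say nothing about where the orbit lands when it returns to the thick part, nor why, inside the tiny admissible window in the $\mf g_{-1}$-direction, there exist (sufficiently many, suitably separated) points whose orbits re-enter a cusp at the prescribed time and height. In the paper this is exactly the content of Lemma~\ref{lem:join} (topological mixing), the Shadowing Lemma~\ref{lem:shadow}, and Theorem~\ref{thm:main}, where the correction elements $u^+\in U$, $u\in NAM$ produced at each return are propagated through conjugation by large powers of $a$, summed as a geometric series, and played against an injectivity radius to get both the nesting and the separation of the sub-boxes. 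Deferring this to ``the main obstacle'' leaves the proof's essential step unproven.

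Two further points need attention. First, when $p_2>0$ you cannot literally ``hold the $\mf g_{-2}$-coordinate fixed'': since $U$ is two-step nilpotent, an $\mf g_{-1}$-perturbation $X$ shifts the effective $\mf g_2$-parameter by $\tfrac12[X_0,X]$ (and the return corrections also have $\mf g_{-2}$-components), so the height estimates at later excursions must be shown to tolerate this drift; this is plausibly fine because $|[X_0,X]|\le|X_0||X|$ is of the same order as the tolerance in the height formula, but it must be checked, and your stated reason for discarding $\mf g_{-2}$ (that its branching ``contributes nothing'') conflates a feature of the isotropic covering argument with a fact about the dynamics --- in the paper the $\mf g_2$-branching is used and is needed there. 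Second, you still must show that \emph{all} points of the limit boxes (including the whole $NAM$-ball factor), not just the centre points, follow the prescribed itinerary and hence diverge on average, and that the mass-distribution estimate holds for arbitrary small balls rather than only for the construction's boxes; these correspond to Proposition~\ref{limitsetdivergent} and Proposition~\ref{HDU} in the paper and are asserted rather than proved in your sketch.
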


As a tool we use a lower estimate on the Hausdorff dimension of the limit set of strongly tree-like collections provided by \cite[\S4.1]{KleMar} (which goes back to \cite{Fal}, \cite{McM}, \cite{Urb}, and \cite{PesWei}). 

Let $U_0$ be a compact subset of $U$ and let $\lambda$ be the Lebesgue measure on $U$ (using the identification $U \cong \R^{p_2} \times \R^{p_1}$). A countable collection $\mathcal U$ of compact subsets of $U_0$ (a subset of the power set of $U_0$) is said to be \textit{strongly tree-like} if there exists a sequence $(\mc U_j)_{j\in\N_0}$ of finite nonempty collections on $U_0$ with $\mathcal U_0=\{U_0\}$ such that 
\[
 \mc U = \bigcup_{j\in\N_0} \mc U_j
\]
and
\begin{align}
\label{stl1}\forall\, j\in \mathbb N_0\ \forall\, A,B \in \mathcal U_j \text{ either $A=B$  or $\lambda(A\cap B)=0$},
\\
\forall\, j\in \mathbb N\ \forall\, B \in \mathcal U_j\ \exists\, A\in\mathcal U_{j-1} \text{ such that } B \subseteq A,
\\
d_j(\mathcal U):=\sup_{A\in \mathcal U_j} \diam(A) \to 0 \text{ as } j\to \infty.
\end{align}
Note that \eqref{stl1} implies $\lambda(A)>0$ for all $A\in\mathcal U$. For a strongly tree-like collection $\mathcal{U}$ with fixed sequence $(\mc U_j)_{j\in\N_0}$ we let 
\begin{equation}\label{fatUn}
{\bf U_j} \sceq \bigcup_{A \in \mathcal U_j} A\qquad \text{for any $j\in \mathbb N_0$.}
\end{equation}
Clearly, ${\bf U}_j \subset {\bf U}_{j-1}$ for any $j \in \mathbb N$. Further we call the nonempty set 
\begin{equation}\label{fatUinfty}
 {\bf U}_\infty \sceq \bigcap_{j\in\N_0} {\bf U}_j
\end{equation}
the \textit{limit set} of $\mc U$. For any subset $B$ of $U_0$ and any $j \in \mathbb N$ we define the {\it j-th stage density} of $B$ in $\mathcal U$ to be
\[
\delta_j(B,\mathcal U) \sceq  
\begin{cases} 
0  & \text{if $\lambda(B)=0$}
\\
\frac{\lambda({\bf U}_j\cap B)}{\lambda(B)} & \text{if $\lambda(B)>0$.} 
\end{cases}
\]
Note that $\delta_j(B, \mathcal U) \le 1$. Finally, for any $j\in \mathbb N_0$ we define the {\it j-th stage density} of $\mathcal U$ to be
\[
\Delta_j(\mathcal U) \sceq \inf_{B \in \mathcal U_j} \delta_{j+1}(B,\mathcal U).
\]

\begin{lemma}[\cite{KleMar}]\label{lem:Frostman}
For any strongly tree-like collection $\mathcal U$ of subsets of $U_0$ we have
\[
\dim_H({\bf U}_\infty) \ge \dim U -\limsup_{j \to \infty }\frac{\sum_{i=0}^{j-1}|\log(\Delta_i(\mathcal U))|}{|\log (d_j(\mathcal U))|}.
\]
\end{lemma}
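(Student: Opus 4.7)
The plan is to invoke the mass distribution principle: construct a Borel probability measure $\mu$ supported on $\mathbf U_\infty$ and show that for every $x\in\mathbf U_\infty$,
\[
\liminf_{r\to 0}\frac{\log\mu(B(x,r))}{\log r}\ \geq\ \dim U\ -\ \limsup_{j\to\infty}\frac{\sum_{i=0}^{j-1}|\log\Delta_i(\mathcal U)|}{|\log d_j(\mathcal U)|},
\]
from which the claim on $\dim_H(\mathbf U_\infty)$ follows.

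I would define $\mu$ inductively on the tree. Set $\mu(U_0)=1$, and having defined $\mu(A)$ for $A\in\mathcal U_j$, distribute this mass among the children $B\in\mathcal U_{j+1}$, $B\subseteq A$, proportionally to their Lebesgue measure:
\[
\mu(B)\ \sceq\ \mu(A)\,\frac{\lambda(B)}{\lambda(\mathbf U_{j+1}\cap A)}.
\]
The almost-disjointness in \eqref{stl1} gives $\lambda(\mathbf U_{j+1}\cap A)=\sum_{B\in\mathcal U_{j+1},\,B\subseteq A}\lambda(B)$ modulo null sets, so the children's masses sum to $\mu(A)$, and the prescription extends (by Kolmogorov consistency, using compactness of $\mathbf U_\infty$) to a Borel probability measure on $\mathbf U_\infty$. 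A straightforward induction on $j$, combined with $\lambda(\mathbf U_{j+1}\cap A)\geq \Delta_j(\mathcal U)\lambda(A)$, then yields the pointwise estimate
\[
\mu(A)\ \leq\ \frac{\lambda(A)}{\lambda(U_0)\,\prod_{i=0}^{j-1}\Delta_i(\mathcal U)}\qquad\text{for every }A\in\mathcal U_j.
\]

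To convert this into a ball estimate, given $r>0$ pick the largest $j$ with $d_j(\mathcal U)\geq r$, so that $d_{j+1}(\mathcal U)<r$. Any $A\in\mathcal U_{j+1}$ meeting $B(x,r)$ is then contained in $B(x,2r)$, hence summing the pointwise bound over such $A$ and using almost-disjointness gives
\[
\mu(B(x,r))\ \leq\ \frac{\lambda(B(x,2r))}{\lambda(U_0)\,\prod_{i=0}^{j}\Delta_i(\mathcal U)}\ \leq\ \frac{c\,r^{\dim U}}{\prod_{i=0}^{j}\Delta_i(\mathcal U)},
\]
with $c$ depending only on $U_0$ and $\dim U$. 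Taking logarithms, dividing by $\log r<0$, and using $|\log r|\leq |\log d_{j+1}(\mathcal U)|$ produces
\[
\frac{\log\mu(B(x,r))}{\log r}\ \geq\ \dim U\ -\ \frac{\sum_{i=0}^{j}|\log\Delta_i(\mathcal U)|}{|\log d_{j+1}(\mathcal U)|}\ +\ O(|\log r|^{-1}),
\]
and the $\liminf$ as $r\to 0$ (so $j\to\infty$) gives exactly the asserted bound.

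The main technical point is the ball-counting step: one must verify that the union of those $A\in\mathcal U_{j+1}$ meeting $B(x,r)$ is contained in $B(x,2r)$, which works because every such $A$ has diameter at most $d_{j+1}(\mathcal U)<r$; after that, the Euclidean structure of $U\cong\R^{\dim U}$ delivers the factor $r^{\dim U}$ automatically. Two minor checks are that $\lambda(A)>0$ for every $A\in\mathcal U$ (immediate from \eqref{stl1}) and that one may assume each $\Delta_i(\mathcal U)>0$ (otherwise the right-hand side of the lemma is $-\infty$ and the statement is vacuous), so the inductive definition of $\mu$ never divides by zero. With these in place, the mass distribution principle (\cite{Fal}; see also \cite{KleMar}) turns the pointwise lower bound on $\liminf_{r\to 0}\log\mu(B(x,r))/\log r$ into the desired lower bound on $\dim_H(\mathbf U_\infty)$.
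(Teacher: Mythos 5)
The paper does not actually prove this lemma --- it is quoted from \cite{KleMar} --- and your mass-distribution argument is precisely the standard proof of such Frostman-type bounds. The construction of $\mu$ by proportional subdivision (the parent of each $B\in\mathcal U_{j+1}$ is unique because $\lambda(B)>0$ and distinct level-$j$ sets overlap in null sets), the cylinder estimate $\mu(A)\le\lambda(A)/\big(\lambda(U_0)\prod_{i=0}^{j-1}\Delta_i(\mathcal U)\big)$ for $A\in\mathcal U_j$, and the covering of $B(x,r)$ by level-$(j+1)$ cylinders of diameter $<r$ are all correct. One remark on the extension step: since the cylinders are closed and may share boundary points, a naive weak* limit of the level-$j$ approximations is only guaranteed to satisfy $\mu(A)\ge{}$the assigned mass on each closed cylinder; you should instead invoke the net-measure (Method I) construction as in \cite{Fal}, which delivers exactly the subadditive upper bound $\mu(E)\le\sum\{\mu(A):A\in\mathcal U_{j+1},\ A\cap E\neq\emptyset\}$ that your ball estimate uses. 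This is standard and not a real gap.

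The genuine problem is the last displayed inequality. With $j$ chosen so that $d_{j+1}(\mathcal U)<r\le d_j(\mathcal U)$, you have
\[
\frac{\log\mu(B(x,r))}{\log r}\ \ge\ \dim U-\frac{\sum_{i=0}^{j}|\log\Delta_i(\mathcal U)|}{|\log r|}+O\big(|\log r|^{-1}\big),
\]
and to replace $|\log r|$ in the denominator by $|\log d_{j+1}(\mathcal U)|$ you would need $|\log r|\ge|\log d_{j+1}(\mathcal U)|$, i.e.\ $r\le d_{j+1}(\mathcal U)$ --- the opposite of what your choice of $j$ guarantees. The inequality $|\log r|\le|\log d_{j+1}(\mathcal U)|$ that you invoke makes the subtracted fraction \emph{larger}, not smaller, so the step goes the wrong way. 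What the argument does yield, via $r\le d_j(\mathcal U)$ and hence $|\log r|\ge|\log d_j(\mathcal U)|$, is
\[
\liminf_{r\to0}\frac{\log\mu(B(x,r))}{\log r}\ \ge\ \dim U-\limsup_{j\to\infty}\frac{\sum_{i=0}^{j}|\log\Delta_i(\mathcal U)|}{|\log d_j(\mathcal U)|},
\]
i.e.\ the quoted bound with one extra term $|\log\Delta_j(\mathcal U)|$ in the numerator. This is formally weaker than the lemma as stated, though it suffices for every use made of it here (in Proposition~\ref{HDU} the extra term is $O(j)$ against sums and denominators of order $j^2$, so the two limsups agree). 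Either record the slightly weaker inequality honestly, or reconcile your indexing conventions for $\Delta_j$ and $d_j$ with those of \cite{KleMar} before claiming the statement in exactly the quoted form.
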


\subsection{Construction of strongly tree-like collection}

We construct a strongly tree-like collection such that its limit set consists only of points which diverge on average. This construction proceeds in several steps. 

\begin{prop}
\label{prop:main}
Let $s>39s_1$ and $R\in\N$. Then there exists $x \in \mc X_{\leq s}$ such that for any $\eta$ in the interval $(0,\tfrac12)$ there exists a subset $E$ of $\overline B_{\eta e^{-R/4}}^{U}$ with $S=\lfloor  e^{R/2}\rfloor^{p_2}\lfloor e^{R/4}\rfloor^{ p_1}$ elements such that 
\begin{enumerate}[{\rm (i)}]
 \item\label{main1} for all $u\in E$, the points $x u$ and $T^{R}(xu)$ are contained in $\mc X_{\le s}$,
 \item for any two distinct elements $u,v\in E$ we have $d\big(T^{R}(u),T^{R}(v)\big) \geq  \eta$,
 \item\label{main3} for all $u\in E$ and all $k \in [0,R]$ we have $T^k(xu) \in \mc X_{>s/39}$.
\end{enumerate}
We may choose for $x$ any element $\Gamma g$ with
\[
 g\in\{\xi na_rm\sigma (1,Z_0,X_0)\sigma\mid n\in N, r\in I, m\in M\},
\]
where $\xi\in\Xi$ is any cusp, $I$ is a specific interval in $\R$ of positive length and $(1,Z_0,X_0)$ is a specific point in $N$, both being specified in the proof. Thus, the dimension of the set of possible $x$ is at least $\dim (NAM)$.
\end{prop}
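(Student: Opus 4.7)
The geometric idea is to design $x$ so that for every $u\in E$ the orbit $k\mapsto T^k(xu)$ traces one controlled excursion into the cusp $\xi$: starting at height $\le s$ at time $0$, rising to a peak height $\gg s$ near $k=R/2$, and returning to height $\le s$ by $k=R$, all the while remaining above $s/39>s_1$. Because the height stays above $s_1$, the explicit formula from case~(ii) of the height-formula proposition recalled in Section~\ref{sec:prelims} governs the orbit uniformly on $[0,R]$, so the proposition reduces to an algebraic analysis of the rational function
\[
 f_{A,B}(y) := \frac{y}{(y+A)^2+B^2},\qquad A=\tfrac14|X|^2,\ B=|Z|.
\]

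A direct calculation shows that $f_{A,B}$ is unimodal with peak at $y^{\ast}=\sqrt{A^{2}+B^{2}}$ and that $f_{A,B}(1)=f_{A,B}(e^{-R})$ exactly when $A^{2}+B^{2}=e^{-R}$ (in which case $y^{\ast}=e^{-R/2}$ sits in the geometric middle of the window $[e^{-R},1]$). I would therefore fix any cusp $\xi\in\Xi$ and pick $(Z_0,X_0)\in\mf g_2\times\mf g_1$ with $\tfrac1{16}|X_0|^{4}+|Z_0|^{2}=e^{-R}$ (taking $X_0=0$ if $p_1=0$ and $Z_0=0$ if $p_2=0$). Writing $A_0=\tfrac14|X_0|^{2}$ and $B_0=|Z_0|$, the interval of allowable $r$ is
\[
 I := \Bigl(\tfrac{s}{39}\bigl((1+A_0)^{2}+B_0^{2}\bigr),\ s\bigl((1+A_0)^{2}+B_0^{2}\bigr)\Bigr],
\]
which has positive length since $s>39s_1$; for any $r\in I$, $n\in N$, $m\in M$ the lift $g=\xi na_r m\sigma(1,Z_0,X_0)\sigma$ satisfies $\height_\xi(\Gamma g)\in(s/39,s]$, and by property~(iii) of $s_1$ it realizes the $\xi$-height on the whole orbit window.

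To build $E$, I observe that for $u\in U$ near the identity the product $u_0u$ with $u_0=\sigma(1,Z_0,X_0)\sigma$ takes the form $\sigma(1,Z_u,X_u)\sigma$, where $(Z_u,X_u)$ is an affine perturbation of $(Z_0,X_0)$ in the $U$-coordinates of $u$, with a quadratic Baker-Campbell-Hausdorff correction valued in $\mf g_2$. Applying the unimodal analysis of $f$ with $(A_u,B_u)$ in place of $(A_0,B_0)$, conditions~(i) and~(iii) for $xu$ translate into restricting $(Z_u,X_u)$ to a product box
\[
 D_R := \{(Z,X)\in\mf g_2\times\mf g_1 \mid |Z|\le C_1 e^{-R/2},\ |X-X_0|\le C_2 e^{-R/4}\}
\]
for constants $C_1,C_2>0$ depending only on $s$ and $r$. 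Inside $D_R$ I lay down a rectangular grid with spacings $\eta e^{-R}$ in the $\mf g_2$-direction and $\eta e^{-R/2}$ in the $\mf g_1$-direction; the conjugation behaviour of $a$ on $U$ (with contraction rates $e^{-R}$ on $\mf g_{-2}$ and $e^{-R/2}$ on $\mf g_{-1}$) converts grid spacings into $\eta$-separation in $d_G$ after time~$R$, and a straightforward count yields at least $\lfloor e^{R/2}\rfloor^{p_2}\lfloor e^{R/4}\rfloor^{p_1}=S$ grid points for $\eta<\tfrac12$ (choosing $C_1,C_2\ge\tfrac12$, which can be arranged). The inclusion $E\subset\overline B^U_{\eta e^{-R/4}}$ is enforced by $C_1 e^{-R/2}\le\eta e^{-R/4}$ for $R$ large and by taking $C_2\le\eta$ in the $\mf g_1$-direction. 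Finally, to upgrade~(ii) from $d_G$ to $d_\homsp$: since by~(i) the points $T^R(xu)$ and $T^R(xv)$ both lie in the compact subregion $\homsp_{\le s}$ of the fundamental domain $\Xi\Omega(s_0,\eta_0)$, Proposition~\ref{funddomGR}(iii)-(iv) excludes any nontrivial $\gamma\in\Gamma$ from closing the $\eta$-gap between their lifts. The free choice of $(n,r,m)\in N\times I\times M$ then gives the dimension count $\dim NAM$.

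The principal obstacle I foresee is the BCH bookkeeping for $u_0 u$ in the non-abelian group $U$: controlling the $\mf g_2$-component of the correction is exactly what forces the $\mf g_2$-scale of $D_R$ to be $e^{-R/2}$ (rather than the ambient ball scale $\eta e^{-R/4}$), which is ultimately responsible for the factor $e^{Rp_2/2}$ (instead of the naive $e^{3Rp_2/4}$) in~$S$. The remaining steps -- the algebra with $f$, the pigeonhole count, and the separation estimate from the fundamental domain -- are routine.
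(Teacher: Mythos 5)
Your route is essentially the paper's (perturb a carefully placed point high in a cusp, control the orbit through the explicit height formula, count a product grid at scales $e^{-R/2}$ and $e^{-R/4}$, and get separation from the expansion rates of $\Ad_{a}$ on $\mf g_{-2}$ and $\mf g_{-1}$), but there is a genuine gap in how you secure condition \eqref{main1} at time $R$. Your box $D_R$ constrains the product coordinates by $|Z_u|\le C_1e^{-R/2}$ \emph{centered at the origin}, and your normalization $\tfrac1{16}|X_0|^4+|Z_0|^2=e^{-R}$ does not force $|X_0|\asymp e^{-R/4}$ (and you explicitly set $X_0=0$ when $p_1=0$). Hence your rectangular grid contains points with $|Z_u|\lesssim\eta e^{-R}$, and, whenever $|X_0|\lesssim C_2e^{-R/4}$ (in particular in the $p_1=0$ case), also $|X_u|$ arbitrarily small. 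For such $u$ the height at time $R$ is $re^{-R}\big/\big((e^{-R}+\tfrac14|X_u|^2)^2+|Z_u|^2\big)$, which is of order $re^{R}$ rather than $\le s$: the orbit does not come back below height $s$ by time $R$, so \eqref{main1} fails on part of your grid. What the argument needs is a lower bound $(e^{-R}+\tfrac14|X_u|^2)^2+|Z_u|^2\gtrsim e^{-R}$ \emph{uniformly over the grid}. The paper arranges exactly this by taking $|Z_0|=\tfrac32e^{-R/2}$, $|X_0|=\tfrac32e^{-R/4}$ and centering the whole perturbation box at $(Z_0,X_0)$ (perturbations $|Z|\le\eta e^{-R/2}$, $|X|\le\eta e^{-R/4}$ with $\eta<\tfrac12$), so that $|X_0+X|>e^{-R/4}$ and $|Z_0+Z+\tfrac12[X_0,X]|>\tfrac58e^{-R/2}$ throughout; your zero-centered $Z$-box destroys precisely this lower bound, and the defect is not a constant-chasing issue but the reason the construction must keep the perturbed points away from the locus of orbits that keep climbing.

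Relatedly, your interval $I$ is determined only by the time-$0$ height of the unperturbed point lying in $(s/39,s]$, but $r$ must satisfy two competing conditions uniformly over the box: the lower bound \eqref{main3} at every $k\in[0,R]$ costs a multiplicative constant (in the paper the height is only $>r/13$, forcing $r>s/3$), while the upper bound \eqref{main1} at $k=R$ forces $r\le\tfrac{25}{64}s$; so the admissible $I$ is a short, carefully tuned interval, and with your normalization and $r$ near $s$ no positive constants $C_1,C_2$ exist at all. Smaller points: your requirements $C_1,C_2\ge\tfrac12$ and $C_2\le\eta<\tfrac12$ are inconsistent (the count only needs $C_1,C_2\gtrsim\eta$); the proposition is asserted for every $R\in\N$, whereas your inclusion $E\subseteq\overline B^U_{\eta e^{-R/4}}$ needs $R$ large --- the paper avoids this by centering $E$ at the identity ($|Z|\le\eta e^{-R/2}$, $|X|\le\eta e^{-R/4}$) and absorbing the offset $\sigma(1,Z_0,X_0)\sigma$ into $x$; and condition (ii) is a distance statement in $G$ for elements of $U$ (the paper simply computes $d(ua^R,va^R)$ and splits into the cases $X\ne X'$ and $X=X'$ to handle the bracket term), so your fundamental-domain/injectivity-radius upgrade to $\homsp$ is not needed here --- that kind of argument only enters later, in Theorem~\ref{thm:main}.
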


\begin{proof}
Fix a cusp $\xi\in\Xi$ and pick an element $(Z_0,X_0)\in \mf g_2 \times \mf g_1$ with $|Z_0| = \frac32 e^{-R/2}$ and $|X_0| = \frac32 e^{-R/4}$. Define
\[
 g \sceq \xi n a_r m\sigma (1, Z_0,X_0) \sigma \quad\text{and}\quad x\sceq \Gamma g
\]
with $n\in N$, $m\in M$. Set 
\[
 B \sceq \{ (Z,X) \in \mf g_2 \times \mf g_1 \mid |Z|\leq \eta e^{-R/2},\ |X| \leq \eta e^{-R/4} \}.
\]
In the following we will estimate the height of $xa^k$, $k\in [0,R]$, and deduce an allowed range for $r$ such that $x$ satisfies \eqref{main3} and \eqref{main1}  for all elements in $\sigma B \sigma$. Since the height does not depend on $n$ and $m$, we omit these two elements. Let $(Z,X)\in B$.
Recall that 
\[
 g\sigma (1,Z,X) \sigma = \xi  a_r \sigma (1, Z_0 + Z + \frac12 [X_0,X], X_0 + X) \sigma.
\]
Then 
\begin{equation}\label{boundsX}
 e^{-R/4} < |X_0 + X| < 2 e^{-R/4}
\end{equation}
and, using $|[X_0,X]|\leq |X_0||X|$, 
\begin{equation}\label{boundsZ}
 \frac58 e^{-R/2}< \left| Z_0 + Z + \frac12 [X_0,X]\right| < 3 e^{-R/2}.
\end{equation}
Let $k\in [0,R]$. Recall that  
\begin{equation}\label{formulaheight}
\height_\xi\big(x\sigma (1,Z,X)\sigma a^k\big)  = r \cdot \frac{e^{-k}}{ \left(e^{-k} + \frac14|X_0+X|^2\right)^2 + \left| Z_0 + Z + \frac12[X_0,X]\right|^2}
\end{equation}
for sufficiently large $r$ (calculated below). Using the upper bounds in \eqref{boundsX} and \eqref{boundsZ} it follows that
\[
\height_\xi\big(x\sigma (1,Z,X)\sigma a^k\big) > \frac{r}{13}.
\]
Hence, \eqref{main3} is satisfied for $r>\frac{s}{3}$ (note that then $\frac{r}{13} > \frac{s}{39}>s_1$). Moreover, for these $r$, \cite[Proposition~5.5]{EKP} shows
\[
 \height\big(x\sigma (1,Z,X)\sigma a^n\big) = \height_\xi\big(x\sigma (1,Z,X)\sigma a^n\big).
\]
Using the lower bounds in \eqref{boundsX} and \eqref{boundsZ} we find
\[
 \height(x\sigma (1,Z,X)\sigma a^k) \leq \frac{r}{e^{-k} + \frac12 e^{-R/2} + \frac{25}{64}e^{k-R}}.
\]
For $r\leq \frac{25}{64}s$, this implies $\height(x\sigma(1,Z,X)\sigma a^k)\leq s$ for $k\in\{0,R\}$ and hence \eqref{main1}.

To define the set $E$, we may pick pairwise disjoint elements
\[
 (Z_i, X_j) \in B,\quad i=1,\ldots, \lfloor e^{R/2} \rfloor^{p_2},\ j=1,\ldots, \lfloor e^{R/4} \rfloor^{p_1}
\]
such that 
\[
 |Z_k - Z_\ell| \geq \eta e^{-R},\quad |X_k-X_\ell| \geq \eta e^{-R/2}
\]
whenever $k\not= \ell$. Define
\[
 E \sceq \{ \sigma (1, Z_i, X_j) \sigma \mid i=1,\ldots \lfloor e^{R/2} \rfloor^{p_2},\ j=1,\ldots, \lfloor e^{R/4} \rfloor^{p_1}\}.
\]
For any two distinct elements $\sigma(1,Z,X)\sigma, \sigma(1,Z',X')\sigma \in E$ we have
\begin{align*}
 d(\sigma (1,Z,X) \sigma a^R, &\sigma (1, Z', X')\sigma a^R) 
\\
&  \geq \max\left\{ \left|Z-Z'+\frac12[X,X']\right|e^R, |X-X'|e^{R/2}\right\}
\end{align*}
If $X\not=X'$, then 
\[
 d(\sigma (1,Z,X) \sigma a^R, \sigma (1, Z', X')\sigma a^R) \geq |X-X'|e^{R/2} \geq \eta.
\]
If $X=X'$, then
\[
 d(\sigma (1,Z,X) \sigma a^R, \sigma (1, Z', X')\sigma a^R) \geq |Z-Z'| e^{R} \geq \eta.
\]
This completes the proof.
\end{proof}

To simplify notation we use the following convention: Given a sequence $(S_k)_{k\in\N}$ of positive natural numbers, for any $n\in\N$  we let 
\[
 \mc S_n \sceq  \{ (i_1,\ldots, i_n) \mid 1\leq i_j \leq S_j,\ j=1,\ldots, n\} =  [1,S_1]\times \cdots \times [1,S_n]
\]
be the set of $n$-multi-indices with entries $1,\ldots, S_j$ in the $j$-th component.  If ${\bf i} = (i_1,\ldots, i_n) \in \mc S_n$ and $j\in [1,S_{n+1}]$, then we set
\[
 ({\bf i}, j) \sceq (i_1,\ldots, i_n, j) \quad\in \mc S_{n+1}.
\]
Finally we let
\[
 \mc S \sceq \bigcup_{n\in\N} \mc S_n.
\]
We let 
\[
B_\eps(\compact) \sceq \{ x\in\homsp\mid d(\compact, x) < \eps \}
\]
denote the $\eps$-thickening of the set $\compact \subseteq \homsp$.

\begin{thm}
\label{thm:main} 
Let $\compact$ be a compact subset of $\homsp$. For any $k\in\N$ let $R_k, S_k \in \N$ such that there exist a subset $E^{(k)} \subseteq U$ of cardinality $S_k$ and a point $x_k\in\compact$ such that for any $u\in E^{(k)}$ we have
\begin{equation} \label{eqn:inK}
x_k u, T^{R_k}(x_k u)  \in \compact.
\end{equation}
Then for any ${\bf i} \in \mc S$ there exists $g_{\bf i}\in U$ such that, if we define
\[
 E'_n \sceq \{ g_{\bf i} \mid {\bf i} \in \mc S_n\} \quad\text{for $n\in\N$,}
\]
the following properties are satisfied:
\begin{enumerate}[{\rm (i)}]
\item\label{Eni} $E'_1 = E^{(1)}$,
\item\label{Enii} for any $m\in\N$ there exists an enumeration of $E^{(m)}$ by $[1, S_m]$, say
\[
 E^{(m)} = \left\{ u_1^{(m)},\ldots, u_{S_m}^{(m)}\right\},
\]
and for any $\eta > 0$ there exists $R'= R'(\eta, \compact) \in \N$ (independent of the choice of the $g_{\bf i}$'s) such that with
\[
 F(k) \sceq \sum_{i=1}^{k-1} R_i + (k-1)R',\quad k\in\N,
\]
we have
\begin{equation}\label{eqn:n and n+1}
 d\big( T^{F(n) + R_n}g_{\bf i}, T^{F(n)+R_n} g_{({\bf i},j)}\big) < \eta
\end{equation}
for any $n\in\N$, ${\bf i}\in \mc S_n$, and $j\in [1,S_{n+1}]$, and
\begin{equation}\label{eqn:main}
 T^{F(k)}(x_1 g_{\bf i})\in x_k u_{i_k}^{(k)} B_{\eta/2}^{NAM}a^{R_k}B_{\eta/2}^{U}a^{-R_k}
\end{equation}
for any $n\in\N$, any ${\bf i} = (i_1,\ldots, i_n) \in \mc S_n$ and any $k\in [1,n]$.
\suspend{enumerate}
If, in addition, $\eta_0>0$ is an injectivity radius of $B_\eps(\compact)$ for some (fixed) $\eps > 0$, and
\[
 E^{(k)} \subseteq B^U_{\eta_0/4} \quad\text{for all $k\in\N$,}
\]
and
\[
 d\big( T^{R_k}u, T^{R_k}v) \geq \eta_0
\]
for any distinct $u,v\in E^{(k)}$, any $k\in\N$, and in \eqref{Enii} we have
\[
 \eta < \min\left\{\frac{\eta_0 (\lambda_0-1)}{4\lambda_0},\frac{\eps}2\right\}
\]
then 
\resume{enumerate}[{[{\rm (i)}]}]
\item\label{Eniii} for any $n\in\N$, the set $E'_n$ has the cardinality of $\mc S_n$, and 
\item\label{Eniv} for any $n\in\N$, any distinct ${\bf i, j} \in \mc S_n$ we have 
\[
\eta_0 > d(g_{\bf i}, g_{\bf j}) \quad\text{and}\quad 
 d\big( T^{F(n)+R_n}g_{\bf i}, T^{F(n)+R_n}g_{\bf j}\big) > \frac{\eta_0}2.
\]
\end{enumerate}
\end{thm}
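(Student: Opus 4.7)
The plan is to build the $g_{\bf i}$ by induction on $n$. For the base case $n=1$ set $g_i:=u_i^{(1)}$ for $i\in[1,S_1]$; then $T^{F(1)}(x_1 g_i)=x_1 u_i^{(1)}$ sits at the centre of the Bowen-type set appearing in \eqref{eqn:main} at $k=1$, so both \eqref{Eni} and \eqref{eqn:main} at this level hold trivially.

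For the inductive step, fix ${\bf i}\in\mathcal{S}_n$ and put $y_{\bf i}:=T^{F(n)+R_n}(x_1 g_{\bf i})$; by \eqref{eqn:main} at level $n$ this point lies in the Bowen $\eta/2$-neighbourhood of $T^{R_n}(x_n u_{i_n}^{(n)})\in\compact$. Using topological mixing of $T$ on $\homsp$ (equivalently, density of unstable horospheres via Hedlund-type arguments, valid on a noncompact finite-volume rank-one locally symmetric space) together with compactness of $\compact$, one finds $R'=R'(\eta,\compact)\in\N$, independent of ${\bf i}$ and of the specific $g_{\bf i}$, such that for every ${\bf i}$ there is a small \emph{connector} $v_{\bf i}\in U$ with $T^{R'}(y_{\bf i}v_{\bf i})\in x_{n+1}B^{NAM}_{\eta/4}a^{R_{n+1}}B^U_{\eta/4}a^{-R_{n+1}}$. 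Then define
\[
g_{({\bf i},j)} := g_{\bf i}\cdot a^{F(n)+R_n}\bigl(v_{\bf i}\cdot a^{R'}u_j^{(n+1)}a^{-R'}\bigr)a^{-(F(n)+R_n)}\qquad(j\in[1,S_{n+1}]),
\]
so that at time $F(n)+R_n$ the orbit of $x_1 g_{({\bf i},j)}$ differs from that of $x_1 g_{\bf i}$ by exactly $v_{\bf i}\cdot a^{R'}u_j^{(n+1)}a^{-R'}\in U$.

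The verification then uses only the contraction bound $a^L B^U_r a^{-L}\subseteq B^U_{\lambda_0^{-L}r}$. The total perturbation at time $F(n)+R_n$ has size at most $|v_{\bf i}|+\lambda_0^{-R'}|u_j^{(n+1)}|<\eta$ for $R'$ large, giving \eqref{eqn:n and n+1}. At time $F(n+1)=F(n)+R_n+R'$, the factor $u_j^{(n+1)}$ re-expands, so $T^{F(n+1)}(x_1 g_{({\bf i},j)})=T^{R'}(y_{\bf i}v_{\bf i})\cdot u_j^{(n+1)}$; the choice of $v_{\bf i}$ combined with a short computation in the $NAM\cdot U$ decomposition (to absorb the $NAM$-error arising from conjugating past $u_j^{(n+1)}$) places this in the Bowen neighbourhood of $x_{n+1}u_j^{(n+1)}$ required by \eqref{eqn:main} at $k=n+1$. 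For $k\leq n$ the perturbation at time $F(k)$ is the one at $F(n)+R_n$ conjugated by $a^{F(n)+R_n-F(k)}$ on the left and its inverse on the right, and is therefore contracted by $\lambda_0^{-(F(n)+R_n-F(k))}\leq \lambda_0^{-R'}$; taking $R'$ large enough that the geometric sum $\sum_{\ell\geq 1}\lambda_0^{-\ell R'}$ fits inside the $\eta/2$-slack of the inductive hypothesis ensures \eqref{eqn:main} persists at each earlier level. Parts \eqref{Eniii}-\eqref{Eniv} follow by letting $k^*$ be the smallest index where ${\bf i}\neq{\bf j}$ differ: the inductive hypothesis puts the orbits at time $F(k^*)$ in Bowen neighbourhoods of distinct $x_{k^*}u_{i_{k^*}}^{(k^*)}$ and $x_{k^*}u_{j_{k^*}}^{(k^*)}$, whose $T^{R_{k^*}}$-iterates are $\geq\eta_0$ apart by hypothesis; the bound $\eta<\eta_0(\lambda_0-1)/(4\lambda_0)$ prevents later perturbations from closing this gap, and the injectivity radius of $B_\eps(\compact)$ converts $\homsp$-separation into $G$-separation.

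The main technical obstacle is the uniform existence of $R'=R'(\eta,\compact)$: we need a quantitative density statement for unstable leaves of $T$ that is uniform over $y$ in a neighbourhood of $\compact$ and over target points $z\in\compact$. This is essentially Hedlund-style horospherical density combined with a covering argument via compactness of $\compact$, but the uniformity needs to be extracted carefully on the noncompact finite-volume quotient. A secondary bookkeeping point is checking that the cumulative effect at any fixed earlier level $F(k)$ of the infinitely many future perturbations never consumes the $\eta/2$-slack; this is absorbed entirely by the exponential factor $\lambda_0^{-R'}$ once $R'$ is taken large.
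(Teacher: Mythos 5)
Your overall scheme (induction on $n$, base case $g_i=u_i^{(1)}$, geometric-series bookkeeping at the earlier levels, separation via the injectivity radius) is the same as the paper's, but your inductive step has a genuine gap. You use a single connector $v_{\bf i}\in U$ aimed at (a Bowen neighbourhood of) $x_{n+1}$ and then append $u_j^{(n+1)}$ by conjugation with $a^{R'}$, so that at time $F(n+1)$ you get $T^{F(n+1)}(x_1g_{({\bf i},j)})=T^{R'}(y_{\bf i}v_{\bf i})\,u_j^{(n+1)}=x_{n+1}\,w\,u_j^{(n+1)}$ with $w$ small. To obtain \eqref{eqn:main} at the new level $k=n+1$ you must rewrite this as $x_{n+1}u_j^{(n+1)}w'$ with $w'\in B^{NAM}_{\eta/2}a^{R_{n+1}}B^U_{\eta/2}a^{-R_{n+1}}$, i.e.\ conjugate $w$ past $u_j^{(n+1)}$. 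This is not a removable bookkeeping error: the conjugate $\big(u_j^{(n+1)}\big)^{-1}w\,u_j^{(n+1)}$ is merely a small element of $G$, of size of order $\eta$, whereas the target set has thickness only of order $\lambda_0^{-R_{n+1}}\eta$ in the $U$-directions. Conjugating the $NAM$-part of $w$ by a $U$-element of size about $\eta_0$ generically creates an \emph{uncontracted} $U$-component of size about $\eta\eta_0$ (for instance $w=a_t$: $u^{-1}a_tu=a_t\,(a_t^{-1}u^{-1}a_tu)$, and the second factor lies in $U$ with size $\asymp|t|\,|u|$), and when $p_2\neq0$ the commutator term in $\mf g_2$ arising from the contracted $U$-part of $w$ causes the same kind of loss. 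Since $R_{n+1}$ is unbounded in the intended application, no uniform $R'$ and no ``short computation in the $NAM\cdot U$ decomposition'' can absorb this; \eqref{eqn:main} fails at the newly created level, and with it the nestedness of the collection, the height control used for divergence on average, and the separation in \eqref{Eniv}.

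The paper avoids exactly this by making the connector depend on $j$: it combines the mixing statement (your worry about uniformity of $R'$ is the unproblematic part, since one only needs to land $\delta$-close to a point of $B_\eta(\compact)$, a fixed-scale target, and $R'$ comes from compactness of $\compact$ plus topological mixing) with a Shadowing Lemma along $U$, applied with target $z_+=x_{n+1}u_j^{(n+1)}$. This produces $u_j^+\in B^U_{c(c+2)\delta}$ and $u_j\in B^{NAM}_{c(c+2)\delta}$ with the \emph{exact} identity $T^{R'}(z_-u_j^+)=x_{n+1}u_j^{(n+1)}u_j$, so the deviation created at time $F(n+1)$ lies entirely in $NAM$ (zero $U$-error), and the $U$-errors produced by all later insertions arrive already contracted and are summed by the geometric series $c(c+2)\delta\sum_i\lambda_0^{-i}$, which is what makes the strengthened form of \eqref{eqn:main} propagate. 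To repair your argument you should choose, for each $j$ separately, a connector $v_{{\bf i},j}\in U$ landing exactly on the $NAM$-leaf of $x_{n+1}u_j^{(n+1)}$ --- which is precisely the paper's construction.
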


The proof of Theorem~\ref{thm:main} is based on Lemmas~\ref{lem:shadow}-\ref{lem:joinshadow} below. Throughout these lemmas we let $\compact$ be a fixed compact subset of $\homsp$.

Recall that the group $UNAM$ is a neighborhood of $1\in G$. We fix $\eps_1 > 0$ such that $B_{\eps_1}^G \subseteq UNAM$. The Shadowing Lemma~\ref{lem:shadow} below uses the fact that the subgroups $NAM$ and $U$ intersect in the neutral element $1$ only. 

\begin{lemma}[Shadowing Lemma]
\label{lem:shadow}
There exists $c>0$ such that for any $\eps \in (0,\eps_1)$ and $x_-,x_+ \in \homsp$ with $d(x_{-},x_{+})< \eps$ there exist $u^+\in B_{c\eps}^{U}$ and $u \in B_{c\eps}^{NAM}$ such that
\begin{equation}
 \label{eqn:shadow+}x_- u^+=x_+u
 \end{equation}
\end{lemma}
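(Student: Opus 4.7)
The plan is to reduce the statement to a local product decomposition of $G$ near the identity and then transfer it to $\homsp$ via the quotient metric. First I would use the definition of $d_\homsp$ to pick lifts $g_-,g_+\in G$ of $x_-,x_+$ with $d_G(g_-,g_+)<\eps$. By left-invariance of $d_G$, the element $h\sceq g_-^{-1}g_+$ lies in $B^G_\eps\subseteq B^G_{\eps_1}\subseteq U\cdot NAM$.

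Next I would exploit the fact that the multiplication map
\[
\mu\colon U\times NAM\to G,\qquad (u^+,w)\mapsto u^+w,
\]
has differential at $(1,1)$ given by the sum $\overline{\mf n}\oplus(\mf n\oplus\mf c\oplus \mf a)\to \mf g$, which is a linear isomorphism since $U\cap NAM=\{1\}$ and the Lie algebras span $\mf g$. Shrinking $\eps_1$ if necessary, $\mu$ restricts to a diffeomorphism of a neighborhood of $(1,1)\in U\times NAM$ onto $B^G_{\eps_1}$; by smoothness of its inverse there is a constant $c_0>0$ so that whenever $h=u^+w$ with $h\in B^G_\eps$, $\eps<\eps_1$, we have $u^+\in B^U_{c_0\eps}$ and $w\in B^{NAM}_{c_0\eps}$. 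Applying this to our $h=g_-^{-1}g_+$ yields $u^+\in B^U_{c_0\eps}$ and $w\in B^{NAM}_{c_0\eps}$ with $g_+=g_-u^+w^{-1}$, and then setting $u\sceq w^{-1}$ gives $g_-u^+=g_+u$, hence $x_-u^+=x_+u$.

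Finally I would control $u=w^{-1}$: inversion on $NAM$ is smooth with differential $-\id$ at $1$, so there is $c_1>0$ with $d(1,w^{-1})\leq c_1 d(1,w)$ for all $w\in B^{NAM}_{\eps_1}$ (again shrinking $\eps_1$ if needed). Taking $c\sceq \max\{c_0,c_0c_1\}$ gives the conclusion.

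The only mildly subtle point is the choice of lifts: one should either use that the infimum in the definition of $d_\homsp$ is attained (or at least approximated arbitrarily well) and then absorb the slack into the constant $c$, or shrink $\eps_1$ so that the relevant balls inject into $\homsp$. Everything else is a standard inverse function theorem argument, so I do not expect a serious obstacle.
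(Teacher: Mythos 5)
Your argument is correct and takes essentially the same route as the paper: lift to $G$, write the small element $g_-^{-1}g_+\in B^G_{\eps}\subseteq UNAM$ as a product of a $U$-part and an $NAM$-part, and get the uniform constant $c$ from smoothness of this decomposition near the identity (the paper compresses your inverse-function-theorem and inversion-Lipschitz steps into ``continuity of the decomposition'' on the bounded range of $\eps$). Two cosmetic remarks only: the Lie algebra of $NAM$ is $\mf n\oplus\mf c$ (note $\mf a\subseteq\mf c$, so listing both is redundant), and since $d_\homsp$ is defined via a minimum the lifts can be chosen exactly, so no slack needs to be absorbed into $c$.
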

\begin{proof}
There exists $g \in G$ with $d(g,1)<\eps$ such that $x_-g=x_+.$ Write $g=u^+u^{-1}$ with $u \in NAM$ and $u^+ \in U$. Then, $d(u^+,1)<c\eps$ and $d(u,1)<c\eps$ and $x_-u^+=x_+u.$ Now continuity of the decomposition, continuous dependence of $c$ on $u^+$ and $u$, and the bounded range for $\eps$ implies a uniform constant $c$.
\end{proof}

The compactness of $\compact$ and the topological mixing of $T$ imply the following lemma.

\begin{lemma}
\label{lem:join}
For any $\eta>0$ and any $\delta>0$ there exists $R'=R'(\delta,\compact,\eta)\in\N$ such that for any $z_-,z_+ \in B_\eta(\compact)$ and $\ell\ge R'$ there exists $z' \in \homsp$ such that $d(z',z_-)<\delta$ and $d(z_+,T^{\ell}(z'))< \delta$.
\end{lemma}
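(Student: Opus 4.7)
The plan is to derive the lemma from topological mixing of $T$ on $\homsp$, made uniform over the pair $(z_-,z_+)$ by a finite-cover argument. For the mixing input: since $G$ is a connected simple Lie group with finite center and $a$ has infinite order (the assumption $\alpha(a)=e\neq 1$ together with $\R$-diagonalizability of $\Ad_a$ forces $\{a^n\}_{n\in\Z}$ to be noncompact in $G$), the Howe--Moore theorem implies that matrix coefficients of the regular representation on $L^2(\homsp,m)$ decay along $\{a^n\}$. In particular, $T$ is mixing with respect to the Haar probability measure $m$, and since $m$ is fully supported this yields topological mixing: for any two nonempty open sets $V,W\subseteq\homsp$ there exists $N(V,W)\in\N$ such that $T^{\ell}(V)\cap W\neq\emptyset$ for all $\ell\geq N(V,W)$.

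To obtain uniformity in $z_\pm$, I would use that $\overline{B_\eta(\compact)}$ is compact, and cover it by finitely many balls $B(p_1,\delta/2),\ldots,B(p_k,\delta/2)$. For each ordered pair $(i,j)\in\{1,\ldots,k\}^2$, topological mixing provides $R_{ij}\in\N$ such that for every $\ell\geq R_{ij}$ one can select $w_{ij}^{\ell}\in B(p_i,\delta/2)$ with $T^{\ell}(w_{ij}^{\ell})\in B(p_j,\delta/2)$. I then set
\[
 R' := \max_{1\leq i,j\leq k}R_{ij},
\]
which is finite since only finitely many pairs occur, and which does not depend on the specific pair $(z_-,z_+)$.

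Given $z_-,z_+\in B_\eta(\compact)$ and $\ell\geq R'$, I would choose indices $i,j$ with $z_-\in B(p_i,\delta/2)$ and $z_+\in B(p_j,\delta/2)$ and put $z':=w_{ij}^{\ell}$. The triangle inequality then yields
\[
 d(z',z_-)\leq d(z',p_i)+d(p_i,z_-)<\delta
\]
and the analogous estimate $d(T^{\ell}(z'),z_+)<\delta$, which is the claim. The one nontrivial ingredient is the Howe--Moore vanishing theorem, which applies here precisely because $G$ is connected semisimple with finite center and $\{a^n\}$ is unbounded; everything else is routine topology, so I do not anticipate a serious obstacle.
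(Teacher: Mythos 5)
Your proposal is correct and follows essentially the same route as the paper, which simply invokes the compactness of $\compact$ together with the topological mixing of $T$; your finite-cover argument over $\overline{B_\eta(\compact)}$ with $R'$ taken as the maximum over the finitely many pairs of balls is the standard way to make that implication uniform. The only additional content is your justification of topological mixing via Howe--Moore decay of matrix coefficients on $L^2_0(\homsp,m)$ plus full support of the Haar measure, which is a valid (and standard) way to supply the mixing input the paper takes for granted.
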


The proof of the following lemma is a combination of Lemmas~\ref{lem:shadow} and \ref{lem:join}.

\begin{lemma}
\label{lem:joinshadow}
Let $\eta > 0$ and let $z_-$ and $z_+$ be in $B_\eta(\compact)$. Let $c$ be as in the Shadowing Lemma~\ref{lem:shadow}. For any $\delta>0$ let $R'=R'(\delta,\compact,\eta)$ be as in Lemma~\ref{lem:join}. Then there exist $u^+ \in B_{c(c+2)\delta}^{U}$ and $u \in B_{c(c+2)\delta}^{NAM}$ such that
\[
T^{R'}(z_-u^+)=z_{+}u.
\]
\end{lemma}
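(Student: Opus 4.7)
The plan is to bridge $z_-$ and $z_+$ via the intermediate point $z'$ supplied by Lemma~\ref{lem:join}, apply the Shadowing Lemma~\ref{lem:shadow} separately at each end of the resulting pseudo-orbit, and finally fold the second shadowing correction back into a single $U$-shift at time zero by conjugating across $a^{R'}$.

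First I would invoke Lemma~\ref{lem:join} (with $\ell=R'$) to pick $z'\in\homsp$ satisfying $d(z',z_-)<\delta$ and $d(T^{R'}(z'),z_+)<\delta$. Applying the Shadowing Lemma to the close pair $(z_-,z')$ produces $u_1^+\in B_{c\delta}^U$ and $u_1\in B_{c\delta}^{NAM}$ with $z_-u_1^+=z'u_1$. Transporting this identity forward by $T^{R'}$, and using that $NAM$ is normalised by $A$, yields
\[
 T^{R'}(z_-u_1^+)\;=\;T^{R'}(z')\,\tilde u_1,\qquad \tilde u_1:=a^{-R'}u_1a^{R'}\in NAM.
\]
Since $\Ad_{a^{-R'}}$ contracts $\mf n$ and fixes $\mf c$, one has $\tilde u_1\in B_{c\delta}^{NAM}$; by the triangle inequality $d(T^{R'}(z_-u_1^+),z_+)\le(c+1)\delta$, so a second Shadowing Lemma application produces $u_2^+\in B_{c(c+1)\delta}^U$ and $u_2\in B_{c(c+1)\delta}^{NAM}$ with $T^{R'}(z_-u_1^+)\,u_2^+=z_+u_2$.

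To conclude, I would absorb $u_2^+$ back to time zero via the rearrangement
\[
 z_-u_1^+a^{R'}u_2^+ \;=\; z_-u_1^+\,\check u_2^+\,a^{R'},\qquad \check u_2^+:=a^{R'}u_2^+a^{-R'}\in U,
\]
so that $T^{R'}(z_-u_1^+\check u_2^+)=z_+u_2$. Setting $u^+:=u_1^+\check u_2^+\in U$ and $u:=u_2\in NAM$ supplies the required identity $T^{R'}(z_-u^+)=z_+u$. The decisive point is that $\Ad_{a^{R'}}$ \emph{contracts} the Lie algebra $\overline{\mf n}$ of $U$ (for $R'>0$), whence $\check u_2^+\in B_{c(c+1)\delta}^U$; the bounds $u^+\in B_{c(c+2)\delta}^U$ and $u\in B_{c(c+2)\delta}^{NAM}$ then follow by the triangle inequality.

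The hard part is the last, ``absorption'' step: a naive attempt to push the $U$-correction $u_2^+$ backwards to time zero by the inverse conjugation $a^{-R'}\cdot a^{R'}$ would \emph{expand} it by a factor of order $\lambda_0^{R'}$ and destroy any uniform bound. The resolution is that $u_2^+$ can instead be absorbed on the \emph{left} of $a^{R'}$ via the opposite, contracting conjugation $a^{R'}\cdot a^{-R'}$; symmetrically, the $NAM$-correction $u_1$ is propagated forward by the dual contracting conjugation on $\mf n$, so that both corrections remain uniformly small independent of $R'$.
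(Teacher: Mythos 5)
Your argument is exactly the paper's proof: the same intermediate point $z'$ from Lemma~\ref{lem:join}, the same two applications of the Shadowing Lemma with $\eps=\delta$ and $\eps=(c+1)\delta$, and the same absorption $u^+ \sceq u_1^+\,(a^{R'}u_2^+a^{-R'})$ exploiting that conjugation by $a^{R'}$ contracts $U$, yielding the bounds $c(c+2)\delta$ by the triangle inequality. The only cosmetic difference is that the paper first reduces to $\delta<\eps_1/(c+1)$ so that both shadowing applications stay within the admissible range $\eps\in(0,\eps_1)$ (the case of larger $\delta$ following \emph{a fortiori}), a harmless caveat you could add in one line.
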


\begin{proof}
We will throughout assume that $\delta< \frac{\eps_1}{c+1}$ to be able to apply the Shadowing Lemma~\ref{lem:shadow}. If the statement is proven for these small $\delta$, it holds \textit{a fortiori} for larger $\delta$. We first use Lemma~\ref{lem:join} to obtain $z' \in \homsp$ such that 
\begin{equation}
\label{eqn:join}
d(z',z_-)<\delta \quad\text{and}\quad d\big(z_+,T^{R'}(z')\big)< \delta.
\end{equation}
Now we apply Lemma~\ref{lem:shadow} with $x_-=z_-,x_+=z'$ and $\eps=\delta$ to obtain $u_1^+ \in B_{c \delta}^{U}$ and $u_1 \in B_{c\delta}^{NAM}$ such that
 \begin{equation}
 z_- u_1^+=z' u_1.
 \end{equation}
The distance between $T^{R'}(z_-u_1^+)$ and $z_+$ is bounded as follows:  
\begin{align*}\label{eqn:forward}
d\big(T^{R'}(z_- u_1^+),z_+\big)& = d\big(T^{R'}(z' u_1),z_+\big)
\\
& \leq d\big(T^{R'}(z'u_1),T^{R'}z'\big)+d\big(T^{R'}z',z_+\big) 
\\
& < (c+1)\delta.
\end{align*}
We again apply Lemma~\ref{lem:shadow}, this time for $x_-=T^{R'}(z_-u_1^+), x_+=z_+$ and $\eps=(c+1)\delta $ to obtain $u_2^+ \in B_{c(c+1)\delta}^{U}$ and $u \in B_{c(c+1)\delta}^{NAM}$ such that
\[
T^{R'}(z_-u_1^+) u_2^+=z_+ u.
\]
Now $T^{R'}(z_-u_1^+) u_2^+=T^{R'}\big(z_-(u_1^+a^{R'} u_2^+a^{-R'})\big)$. Setting $u^+ \sceq u_1^+(a^{R'} u_2^+a^{-R'})$ concludes the proof.
\end{proof}

\begin{proof}[Proof of Theorem~\ref{thm:main}]

We start by proving \eqref{Eni} and \eqref{Enii}. To that end let $\eta > 0$ be arbitrary and pick $c>0$ as in the Shadowing Lemma~\ref{lem:shadow}. Set $D_\eta\sceq B_\eta(\compact)$, 
\[
 \delta\sceq \frac{\eta}2 \cdot \frac{\lambda_0-1}{c(c+2)\lambda_0}
\]
and fix $R'$ with the properties as in Lemma~\ref{lem:join} applied for this $\delta$. Instead of proving \eqref{eqn:main} we will prove the stronger statement
\begin{equation}
\label{eqn:i_k}
T^{F(k)}(x_1 g_{\bf i})\in x_k u_{i_k}^{(k)}B_{c(c+2)\delta}^{NAM}a^{R_k}B_{r(n,k)}^{U}a^{-R_k}
\end{equation}
for any $n\in\N$, any ${\bf i} = (i_1,\ldots, i_n) \in \mc S_n$ and any $k \in [1,n]$ where 
\[
r(n,k):=c(c+2)\delta \sum_{i=0}^{n-k-1} \lambda_0^{-i}
\]
and $r(n,n) = 0$ by convention. Since $c(c+2)\delta <\eta/2$ and $r(n,k)<\eta/2$, this is indeed stronger than \eqref{eqn:main}. For the proof of \eqref{eqn:i_k} we precede by induction on $n$. As a by-product, we will prove \eqref{Eni} and \eqref{eqn:n and n+1}. 

For $n=1$ and $j\in [1, S_1]$ we set $g_i=u_i^{(1)}$. Then \eqref{Eni}  and \eqref{eqn:i_k} for $n=1$ are trivially satisfied. Suppose that for some $n\in\N$ we constructed the set $E'_n$ fulfilling \eqref{eqn:i_k}. We show how to construct $E'_{n+1}$ from $E'_n$ such that \eqref{eqn:i_k} is satisfied for $n+1$ and \eqref{eqn:n and n+1} for $n$.

Let ${\bf i} \in \mc S_n$ and $j\in [1, S_{n+1}]$. By inductive hypothesis 
\[
 T^{F(n)}(x_1g_{\bf i}) \in x_n u_{i_n}^{(n)}B_{\frac{\eta}2}^{NAM} a^{R_n} B_{\frac{\eta}2}^U a^{-R_n}.
\]
Thus,
\begin{align*}
T^{F(n)+R_n}(x_1g_{\bf i}) \in T^{R_n}(x_nu_{i_n}^{(n)}) a^{-R_n} B_{\frac{\eta}2}^{NAM} a^{R_n} B_{\frac{\eta}2}^U.
\end{align*}
From 
\[
 a^{-R_n}B_{\frac{\eta}2}^{NAM}a^{R_n}B_{\frac{\eta}2}^U \subseteq B_\eta^G
\]
and $T^{R_n}(x_nu_{i_n}^{(n)}) \in \mc K$, it follows that $T^{F(n)+R_n}(x_1g_{\bf i}) \in D_\eta$. Further, $x_{n+1}u_j^{(n+1)}\in \mc K \subseteq D_\eta$. We apply Lemma~\ref{lem:joinshadow} with
\[
 z_-\sceq T^{F(n)+R_n}(x_1 g_{\bf i}) \quad\text{and}\quad z_+\sceq x_{n+1}u_j^{(n+1)}
\]
to obtain $u_j^+\in B^U_{c(c+2)\delta}$ and $u_j\in B^{NAM}_{c(c+2)\delta}$ satisfying
\begin{equation}\label{eqn:z2}
 x_1g_{\bf i}a^{F(n)+R_n}u_j^+ a^{R'} = T^{R'}(z_-u_j^+) = z_+ u_j = x_{n+1}u_j^{(n+1)}u_j.
\end{equation}
We define
\[
 g_{({\bf i},j)} \sceq g_{\bf i} a^{F(n)+R_n} u_j^+ a^{-F(n)-R_n} \quad \in U
\]
and
\[
 E'_{n+1} \sceq \{ g_{({\bf i},j)} \mid {\bf i}\in \mc S_n,\ j\in [1,S_{n+1}]\}.
\]
Clearly, 
\[
d\big( T^{F(n)+R_n}(g_{\bf i}), T^{F(n)+R_n}(g_{({\bf i},j)})\big) = d(1,u_j^+) < \frac{\eta}2,
\]
which proves \eqref{eqn:n and n+1} for $n$. 

We will now show \eqref{eqn:i_k} for $n+1$. Suppose first that $k=n+1$. From the definition of $F(n+1)$ and \eqref{eqn:z2} it immediately follows that 
\[
T^{F(n+1)}(x_1 g_{({\bf i},j)})  \in x_{n+1}u_j^{(n+1)} B_{c(c+2)\delta}^{NAM}.
\]
Suppose now that $k\in [1,n]$. Then 
\begin{align*}
T^{F(k)}(x_1 g_{({\bf i},j)}) & = x_1 g_{\bf i} a^{F(n)+R_n} u_j^+ a^{F(k)-F(n)-R_n}
\\
& = T^{F(k)}(x_1g_{\bf i}) a^{-F(k)+F(n)+R_n} u_j^+ a^{F(k)-F(n)-R_n}
\\
& \in T^{F(k)}(x_1 g_{\bf i}) a^{-F(k)+F(n)+R_n} B^U_{c(c+2)\delta} a^{F(k)-F(n)-R_n}.
\end{align*}
From the inductive hypothesis we have
\[
T^{F(k)}(x_1 g_{\bf i}) \in x_ku_{i_k}^{(k)} B^{NAM}_{c(c+2)\delta} a^{R_k} B^U_{r(n,k)} a^{-R_k}.
\]
Therefore
\begin{multline}\label{eqn:inductivestep}
T^{F(k)}(x_1 g_{({\bf i},j)}) \\
\in x_k u_{i_k}^{(k)} B_{c(c+2)\delta}^{NAM} a^{R_k} B_{r(n,k)}^{U}a^{-F(k)-R_k + F(n)+R_n} B_{c(c+2)\delta}^{U}a^{F(k)-F(n)-R_n}.
\end{multline}
If $k=n$, then $r(n,k)=0$. Hence \eqref{eqn:inductivestep} simplifies to
\begin{equation*}
T^{F(n)}(x_1 g_{({\bf i}, j)}) \in x_n u_{i_n}^{(n)} B_{c(c+2)\delta}^{NAM} a^{R_n} B_{c(c+2)\delta}^{U}a^{-R_n}.
\end{equation*}
If $k\in [1,n-1]$, then 
\[
 -F(k)-R_k + F(n) + R_n = \sum_{i=k+1}^n R_i + (n-k)R' \seqc p(k,n).
\]
Hence
\begin{align*}
a^{-F(k)-R_k + F(n) + R_n} B_{c(c+2)\delta}^{U}a^{F(k)+R_k-F(n)-R_n}&\subseteq B_{c(c+2)\delta \lambda_0^{-p(k,n)}}^{U}
\\ &
\subseteq B_{c(c+2)\delta \lambda_0^{-(n-k)}}^{U}.
\end{align*}
With $r(n,k)+c(c+2)\delta \lambda_0^{-(n-k)}=r(n+1,k)$ it now follows
\[
T^{F(k)}(x_1 g_{({\bf i},j)}) \in  x_k u_{i_k}^{(k)} B_{c(c+2)\delta}^{NAM} a^{R_k} B_{r(n+1,k)}^{U}a^{-R_k}.
\]
This completes the proof of \eqref{Enii}.

Since \eqref{Eniii} is an immediate consequence of \eqref{Eniv}, it remains to prove the two statements in \eqref{Eniv}. We start with the first one. Let ${\bf i} = (i_1,\ldots, i_n), {\bf j}=(j_1,\ldots, j_n) \in \mc S_n$. Then 
\[
 d(g_{\bf i}, g_{\bf j}) \leq d(g_{\bf i}, g_{i_1}) + d(g_{i_1},g_{j_1}) + d(g_{j_1},g_{\bf j}).
\]
Since $g_{i_1},g_{j_1}\in E^{(1)} \subseteq B^U_{\eta_0/4}$, we have $d(g_{i_1},g_{j_1}) < \eta_0/2$. To bound the other two terms, let $k\in [1,S_{n+1}]$. Then by \eqref{eqn:n and n+1} we have
\[
 d\big( T^{F(n)+R_n} g_{\bf i}, T^{F(n)+R_n} g_{({\bf i},k)}\big) < \eta.
\]
Therefore,
\[
 d(g_{\bf i}, g_{({\bf i},k)}) < \eta \lambda_0^{-F(n)-R_n}.
\]
Applying this observation iteratively, we obtain
\[
 d(g_{i_1}, g_{\bf i}) < \eta \sum_{j=1}^{n-1} \lambda_0^{-F(j)-R_j} < \eta \cdot\frac{1}{\lambda_0-1} < \frac{\eta_0}{4}.
\]
Thus, 
\[
 d(g_{\bf i}, g_{\bf j}) < \eta_0
\]
as claimed.

Finally, let ${\bf i}, {\bf j}\in\mc S_n$, ${\bf i}\not={\bf j}$. It remains to show that 
\begin{equation}\label{lastfact}
 d(T^{F(n)+R_n}g_{\bf i}, T^{F(n)+R_n}g_{\bf j}) > \frac{\eta_0}2.
\end{equation}
Suppose first that we find $k\in [1,n]$ such that 
\[
 d(g_{\bf i}a^{F(k)}, g_{\bf j}a^{F(k)}) \geq \eta_0.
\]
Since $F(k)-F(n)-R_n < 0$, the assumption
\[
 d(g_{\bf i}a^{F(n)+R_n}, g_{\bf j}a^{F(n)+R_n}) \leq \frac{\eta_0}{2}
\]
would result in 
\[
 d(g_{\bf i}a^{F(k)}, g_{\bf j}a^{F(k)}) \leq \frac{\eta_0}2.
\]
Therefore, in this case, \eqref{lastfact} is obviously satisfied.

To complete the proof pick $k\in [1,n]$ such that $i_k\not=j_k$ and suppose 
\[
 d(g_{\bf i}a^{F(k)}, g_{\bf j}a^{F(k)}) < \eta_0.
\]
Actually we may suppose $\leq \eta_0/2$, but $< \eta_0$ turns out to be sufficient. By \eqref{eqn:main} we find $u_i^-, u_j^- \in B_{\eta/2}^{NAM}$ and $u_i^+,u_j^+\in B_{\eta/2}^U$ such that
\begin{align*}
 T^{F(k)}(x_1g_{\bf i}) & = x_ku_{i_k}^{(k)}u_i^-a^{R_k}u_i^+a^{-R_k}
\intertext{and}
T^{F(k)}(x_1g_{\bf j}) & = x_ku_{j_k}^{(k)}u_j^-a^{R_k}u_j^+a^{-R_k}.
\end{align*}
Pick $h_0,h_k\in G$ such that $\Gamma h_0 = x_1$ and $x_k=x_1h_k$. Further let $\gamma\in\Gamma$ be such that 
\[
 \gamma h_0 g_{\bf i} a^{F(k)} = h_0 h_k u_{i_k}^{(k)} u_i^- a^{R_k} u_i^+ a^{-R_k}.
\]
We will show that 
\begin{equation}\label{samegamma}
 \gamma h_0 g_{\bf j} a^{F(k)} = h_0 h_k u_{j_k}^{(k)} u_j^- a^{R_k} u_j^+ a^{-R_k}
\end{equation}
(same $\gamma$!). To that end we note that 
\begin{align*}
& d\big(h_0h_k u_{i_k}^{(k)} u_i^- a^{R_k} u_i^+ a^{-R_k}, h_0h_k u_{j_k}^{(k)} u_j^- a^{R_k} u_j^+ a^{-R_k}\big)
\\
& \quad \leq d\big( u_{i_k}^{(k)} u_i^- a^{R_k} u_i^+ a^{-R_k}, u_{i_k}^{(k)}\big) + d\big(u_{i_k}^{(k)}, u_{j_k}^{(k)}\big) + d\big(u_{j_k}^{(k)}, u_{j_k}^{(k)}u_j^- a^{R_k} u_j^+ a^{-R_k}\big)
\\
& \quad < \eta + \frac{\eta_0}2 + \eta < \eta_0
\end{align*}
and
\[
 d\big(\gamma h_0 g_{\bf i} a^{F(k)}, \gamma h_0 g_{\bf j} a^{F(k)} \big) < \eta_0.
\]
Since $\eta_0$ is an injectivity radius of  $\partial_{B_\eps^G}\mc K$, now \eqref{samegamma} follows. Finally,
\begin{align*}
& d\big( g_{\bf i} a^{F(n)+R_n} , g_{\bf j} a^{F(n)+R_n} \big) 
\\
& \quad \geq d\big(g_{\bf i} a^{F(k)+R_k}, g_{\bf j} a^{F(k)+R_k}\big)
\\
& \quad =  d\big( u_{i_k}^{(k)} u_i^- a^{R_k} u_i^+, u_{j_k}^{(k)} u_j^- a^{R_k} u_j^+\big)
\\
& \quad \geq d\big(u_{i_k}^{(k)}a^{R_k}, u_{j_k}^{(k)}a^{R_k}\big) - d\big(u_{i_k}^{(k)} a^{R_k}, u_{i_k}^{(k)} u_i^- a^{R_k} u_i^+\big) 
\\ & \hphantom{\quad \geq d\big(u_{i_k}^{(k)}a^{R_k}, u_{j_k}^{(k)}a^{R_k}\big)}  - d\big(u_{j_k}^{(k)}a^{R_k}, u_{j_k}^{(k)}u_j^- a^{-R_k} u_j^+\big)
\\ 
& \quad\geq \eta_0 - 2\eta > \frac{\eta_0}2.
\end{align*}
This completes the proof.
\end{proof}

\textbf{Definition of strongly tree-like collection.} Fix $s_0>39s_1$ and set $\compact\sceq \homsp_{\leq s_0}$. Further fix an injectivity radius $\eta_0$ of some neighborhood of $\compact$ such that $\frac12>\eta_0>0$ and choose
\[
 \eta < \frac{\eta_0 (\lambda_0-1)}{4\lambda_0}
\]
so small that we may apply Theorem~\ref{thm:main}. For $k\in\N$ we set $\wt R_k\sceq k$ and 
\[
 \wt S_k \sceq \lfloor e^{k/2} \rfloor^{p_2} \cdot \lfloor e^{k/4} \rfloor^{p_1}.
\]
For any $k\in\N$ we apply Proposition~\ref{prop:main} with $\wt R_k$, $\wt S_k$, $s_0$ and $\eta_0$ to get a point $x_k\in\compact$ and a subset $\wt E^{(k)}\subseteq \overline B^U_{\eta_0 e^{-k/4}}$ with the properties of this proposition. For $k\geq k_0\sceq \lceil 4\log 4 \rceil$ we have $\wt E^{(k)} \subseteq B^U_{\eta_0/4}$. We set $E^{(k)} \sceq \wt E^{(k+k_0-1)}$, $R_k\sceq \wt R_{k+k_0-1}$, $S_k\sceq \wt S_{k+k_0-1}$ for $k\in\N$ and  apply Theorem~\ref{thm:main} to these sequences to construct a sequence $(E'_n)_{n\in\N}$ of sets with the properties as in Theorem~\ref{thm:main}. For any $n\in\N$ we set
\[
 \mc U_n \sceq \left\{ ua^{F(n)+R_n} \overline{B}^U_{\eta_0/4}a^{-F(n)-R_n} \left\vert\  u\in E'_n \vphantom{a^{F(n)+R_n} \overline{B}^U_{\eta_0/4}a^{-F(n)-R_n}}\right.\right\}.
\]
Let 
\[
 U_0 \sceq \bigcup \mc U_1 = \bigcup_{u\in E'_1} ua^{k_0} \overline B^U_{\eta_0/4} a^{-k_0},
\]
which is a compact non-null subset of $U$, and let $\mc U_0\sceq \{ U_0 \}$. 
We claim that 
\[
 \mc U \sceq \bigcup_{n\in\N_0}\mc U_n 
\]
is a strongly tree-like collection on $U_0$. 
To that end let $n\in\N$. Suppose that $g,h\in E'_n$, $g\not=h$. By Theorem~\ref{thm:main} we have 
\[
 d\big(ga^{F(n)+R_n}, ha^{F(n)+R_n}\big) > \frac{\eta_0}2.
\]
Therefore
\[
 ga^{F(n)+R_n} \overline{B}^U_{\eta_0/4} \cap h a^{F(n)+R_n} \overline{B}^U_{\eta_0/4} = \emptyset,
\]
and hence
\[
 ga^{F(n)+R_n} \overline{B}^U_{\eta_0/4} a^{-F(n)-R_n} \cap ha^{F(n)+R_n}\overline{B}^U_{\eta_0/4}a^{-F(n)-R_n} = \emptyset.
\]
This shows \eqref{stl1} (and even a stronger disjointness). Now let ${\bf i} \in \mc S_n$ and $j\in [1,S_{n+1}]$. We claim that 
\[
 g_{({\bf i},j)} a^{F(n+1)+R_{n+1}} \overline{B}^U_{\eta_0/4} a^{-F(n+1)-R_{n+1}} \subseteq g_{\bf i} a^{F(n)+R_n} \overline{B}^U_{\eta_0/4}a^{-F(n)-R_n},
\]
which is equivalent to
\begin{align}\label{nested}
& g_{({\bf i}, j)}a^{F(n)+R_n} a^{F(n+1)+R_{n+1}-F(n)-R_n}\overline{B}^U_{\eta_0/4}a^{-F(n+1)-R_{n+1}+F(n)+R_n} 
\\
& \qquad \subseteq g_{\bf i} a^{F(n)+R_n} \overline{B}^U_{\eta_0/4}. \nonumber
\end{align}
Since 
\[
 F(n+1)+R_{n+1} - F(n)-R_n = R_{n+1} + R'>0,
\]
we have
\[
 a^{F(n+1)+R_{n+1} - F(n)-R_n} \overline{B}^U_{\eta_0/4} a^{-F(n+1)-R_{n+1} + F(n) + R_n} \subseteq \overline{B}^U_{\lambda_0^{-1} \eta_0/4}.
\]
Then \eqref{nested} follows from
\[
 \lambda_0^{-1} \frac{\eta_0}4 + d\big(g_{({\bf i},j)}a^{F(n)+R_n}, g_{\bf i}a^{F(n)+R_n}\big) < \frac{\eta_0}4 \cdot \frac{1}{\lambda_0} + \frac{\eta_0}{4} \cdot \frac{\lambda_0-1}{\lambda_0} = \frac{\eta_0}4.
\]
Thus, the sets of the collection are nested in the required way. Finally, 
\[
 g a^{F(n)+R_n} \overline{B}^U_{\eta_0/4} a^{-F(n)-R_n} \subseteq g \overline{B}^U_{\lambda_0^{-F(n)-R_n} \eta_0/4},
\]
and hence
\[
 \diam\big( g a^{F(n)+R_n} \overline{B}^U_{\eta_0/4} a^{-F(n)-R_n}\big) \ll \lambda_0^{-F(n)-R_n}.
\]
Therefore, the sequence of supremal diameters converges to $0$ as $n\to \infty$. This completes the proof that $\mc U = \bigcup \mc U_n$ is a strongly tree-like collection.

Throughout we fix this choice of strongly tree-like collection. Moreover, we define the sets $\bf U_n$, $n\in\N_0$, and $\bf U_\infty$ as in \eqref{fatUn} and \eqref{fatUinfty}.

\begin{prop}\label{limitsetdivergent}
Let $x_1\in\compact = \homsp_{\leq s_0}$ be as in Theorem~\ref{thm:main}. Then $x_1g$ diverges on average for all $g\in \bf U_\infty$.
\end{prop}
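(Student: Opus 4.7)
To show that $x_1g$ diverges on average, I would establish that for every $T>0$ the set of indices $m\in\mathbb N_0$ with $T^m(x_1g)\in\homsp_{\leq T}$ has density zero. Since the members of $\mc U_n$ are pairwise disjoint, the point $g\in{\bf U}_\infty$ determines a unique consistent sequence $(\mathbf i_n)_{n\in\mathbb N}$ with $\mathbf i_n\in\mc S_n$, $\mathbf i_{n+1}$ extending $\mathbf i_n$, and
\[
g\in g_{\mathbf i_n}\,a^{F(n)+R_n}\,\overline B^U_{\eta_0/4}\,a^{-F(n)-R_n}
\]
for every $n$. For any fixed $k\leq n$, the contraction of the $U$-factor by $\Ad_{a^{-(F(n)+R_n-F(k))}}$ combined with \eqref{eqn:main} yields a decomposition
\[
T^{F(k)}(x_1g)=x_k\,u^{(k)}_{i_k}\,b_k^-\,b_k^+
\]
with $b_k^-\in B^{NAM}_{\eta/2}$ and $b_k^+\in B^U$ of radius at most $\lambda_0^{-R_k}(\eta/2+\eta_0/4)$.

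Next I would compare the orbits $j\mapsto T^{F(k)+j}(x_1g)$ and $j\mapsto T^j(x_ku^{(k)}_{i_k})$ on $j\in[0,R_k]$. Their ratio is $a^{-j}(b_k^-b_k^+)a^j$, and the decisive observation is that $\Ad_{a^{-j}}$ contracts the $N$-component of $b_k^-$, fixes its $AM$-component (since $M$ centralizes $A$), and expands the $U$-element $b_k^+$ by at most $\lambda_0^j\leq\lambda_0^{R_k}$, which is exactly absorbed by the $\lambda_0^{-R_k}$ smallness of $b_k^+$. Hence the two orbits remain in a single uniform ball $B^G_{c_0}$ throughout the entire excursion, independently of $j$, $k$, and $g$. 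Continuity of the height function on this bounded set then yields a constant $C>0$ with
\[
\height\bigl(T^{F(k)+j}(x_1g)\bigr)\geq C^{-1}\,\height_\xi\bigl(T^j(x_ku^{(k)}_{i_k})\bigr)
\]
for all $j\in[0,R_k]$ and all $k$, where $\xi$ is the cusp used to construct $x_k$.

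Using the explicit height formula from Proposition~2.4 together with the bounds $r\in(s_0/3,\tfrac{25}{64}s_0)$, $|X_0+X|\sim e^{-R_k/4}$ and $|Z_0+Z+\tfrac12[X_0,X]|\sim e^{-R_k/2}$ extracted from the proof of Proposition~\ref{prop:main}, the map $j\mapsto\height_\xi(T^j(x_ku^{(k)}_{i_k}))$ is monotone increasing from a value $\sim r$ at $j=0$ up to a peak $\sim r\,e^{R_k/2}$ near $j=R_k/2$, and then symmetrically decreasing. Solving $\height_\xi<CT$ confines the exceptional indices to the initial and terminal subintervals of $[0,R_k]$ of length at most $\lceil\log(CT/r)\rceil^++1$ each, so that the number of $j\in[0,R_k]$ with $T^{F(k)+j}(x_1g)\in\homsp_{\leq T}$ is bounded by some $B=B(T)$ independent of $k$.

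Summing over the first $n$ excursions plus the $n-1$ intermediate transitions of length $R'$, the number of $m\in[0,F(n)+R_n]$ with $T^m(x_1g)\in\homsp_{\leq T}$ is at most $nB+(n-1)R'=O(n)$, while the total time $F(n)+R_n=\sum_{k=1}^n(k+k_0-1)+(n-1)R'$ grows like $n^2/2$. The corresponding density therefore tends to $0$, and an interpolation over $[F(n)+R_n,F(n+1)+R_{n+1}]$ (of length $O(n)$, negligible against $n^2$) extends the conclusion to general $m\to\infty$. The main obstacle is the second step: the precise matching between the $\lambda_0^{-R_k}$ smallness of the $U$-perturbation at time $F(k)+R_k$ and the worst-case $\lambda_0^{R_k}$ expansion under backward $a$-conjugation is what keeps both orbits in a common bounded neighborhood for the entire length of each excursion, no matter how long.
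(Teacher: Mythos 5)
Your overall strategy is the same as the paper's: split $[0,F(n)+R_n]$ into the excursions $[F(k),F(k)+R_k]$ and the connecting stretches of length $R'$, show that the orbit of $x_1g$ stays uniformly close to the model orbit $T^j(x_ku^{(k)}_{i_k})$ throughout each excursion, and conclude that the time spent in a fixed compact set up to time $F(n)+R_n$ is $O(n)$ against $F(n)+R_n\asymp n^2$. However, the step you yourself single out as decisive fails as written whenever both root spaces are nontrivial. You record the perturbation at time $F(k)$ only through its norm, $b_k^+\in B^U$ of radius at most $\lambda_0^{-R_k}(\eta/2+\eta_0/4)$, and then assert that conjugation by $a^{-j}$ expands $U$-elements by at most $\lambda_0^j$. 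But the worst-case expansion rate on $U$ is $\lambda_1^j$ (equal to $e^j$ on the $\mf g_{-2}$-direction when $p_2>0$), whereas the contraction used to bound $b_k^+$ is only guaranteed at rate $\lambda_0^{-R_k}=e^{-R_k/2}$ when $p_1>0$; so from the radius bound alone, $a^{-j}b_k^+a^j$ could a priori have size of order $e^{j-R_k/2}$, which is huge for $j$ near $R_k$. The cancellation is exact only componentwise, and to see it you must remember that $b_k^+$ is literally of the form $a^{m}wa^{-m}$ with $m\ge R_k\ge j$ and $w$ small (coming from $a^{R_k}B^U_{\eta/2}a^{-R_k}$ and from the nesting $g\in g_{\bf i}a^{F(n)+R_n}\overline B^U_{\eta_0/4}a^{-F(n)-R_n}$); then $a^{-j}b_k^+a^j=a^{m-j}wa^{-(m-j)}$ is a conjugate by a nonnegative power of $a$ and hence no larger than $w$. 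This is precisely how the paper argues: it keeps the perturbation as an explicit product of conjugates (formulas \eqref{formg} and \eqref{x11}) so that after shifting by $a^{-j}$ all conjugating exponents remain nonnegative, yielding the containment \eqref{contained}. The gap is thus repairable with information you already have, but as stated your bookkeeping only covers the case $\lambda_0=\lambda_1$, i.e.\ the real hyperbolic case.

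The rest of your argument is sound and differs from the paper only in how the low-height times inside an excursion are counted. You use the explicit height formula together with a multiplicative comparison $\height\big(T^{F(k)+j}(x_1g)\big)\ge C^{-1}\height_\xi\big(T^j(x_ku^{(k)}_{i_k})\big)$ for points differing by a uniformly bounded group element; this is correct but deserves a line of justification (right multiplication by a bounded element multiplies $s_\xi$ by a bounded factor), and it yields an explicit bound of order $\log T$ exceptional steps per excursion. The paper argues more softly: it chooses $s_0$ so that the $\eta_0$-neighbourhood of $\homsp_{>s_0/39}$ lies in $\homsp_{>s_1}$ and invokes the rank-one descent property (orbits spend a uniformly bounded number of consecutive steps in $\homsp_{>s_1}\cap\homsp_{\le s}$), getting at most $2\ell$ exceptional steps per excursion. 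Either count, combined with your final density computation and the interpolation over $[F(n)+R_n,F(n+1)+R_{n+1}]$, completes the proof once the gap above is closed.
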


\begin{proof}
The structure of the sets in $\mc U$ yields that $\bf U_\infty$ consists of the elements
\[
 g_\infty = \lim_{n\to\infty} g_{(i_1,\ldots, i_n)} = \bigcap_{n\in\N} g_{(i_1,\ldots, i_n)} a^{F(n)+R_n} \overline{B}^U_{\eta_0/4} a^{-F(n)-R_n},
\]
where $(i_k)_{k\in\N}$ is any sequence such that $i_k \in [1,S_k]$ for $k\in\N$. Let $\compact'$ be any compact subset of $\homsp$. Without loss of generality, we may assume that $\compact'= \homsp_{\leq s}$ for some large $s$. In the following we will prove that the amount of time (discrete time steps) in $[0,F(n)+R_n]$ which is spend in $\compact'$ by the points in 
\[
 x_1 g_{(i_1,\ldots, i_n)}a^{F(n)+R_n} \overline{B}^U_{\eta_0/4} a^{-F(n)-R_n}
\]
grows sublinear as $n\to\infty$. This will then prove the proposition. To start we remark that for any given point in $x\in\homsp$, its $T$-orbit $(xa^k)_{k\in\N_0}$ stays only a uniformly bounded number of consecutive steps in the strip $\homsp_{>s_1}\cap \homsp_{\leq s}$ (which is a due to the space $G/K$ being of rank one, see \cite{EKP}). Let
\begin{align*}
\ell & \sceq \max\{ k\in\N \mid \exists\, x\in\homsp_{\leq s_1}\colon Tx,\ldots, T^kx\in\homsp_{>s_1}\cap \homsp_{\leq s},\ T^{k+1}x\in\homsp_{>s} \}
\\
& = \max\{ k\in\N\mid \exists\, x\in\homsp_{>s}\colon Tx,\ldots, T^kx\in \homsp_{>s_1}\cap \homsp_{\leq s},\ T^{k+1}x\in\homsp_{\leq s_1}\}.
\end{align*}
By the choice of $s_1$, as soon as $\height(xa^k) > \height(xa^{k+1}) > s_1$, the orbit strictly descends until being below height level $s_1$. Since $s_0/39>s_1$, this means that as soon as the orbit stays for more than $2\ell$ consecutive steps above height $s_1$, say for $m$ steps, it necessarily stays  at least $m-2\ell$ steps in $\homsp_{>s}$. To simplify the proof we may assume that $s_0$ is chosen such that 
\[
 x\overline{B}^G_{\eta_0} \subseteq \homsp_{>s_1}
\]
for all $x\in\homsp_{>s_0/39}$. We use the notation of the proof of Theorem~\ref{thm:main}. Let $n\in\N$ and ${\bf i} = (i_1,\ldots, i_n)\in \mc S_n$. We claim that 
\begin{equation}\label{contained}
 x_1 g_{\bf i} a^{F(n) + R_n} \overline B^U_{\eta_0/4} a^{-F(n)-R_n + k} \subseteq x_m u_{i_m}^{(m)} a^{k-F(m)} \overline B^G_{\eta_0}
\end{equation}
for $k\in [F(m), F(m)+R_m]$ and $m=1,\ldots, n$. For $n=1$, this is clearly true. For ${\bf j}=(j_1,\ldots, j_{p+1}) \in \mc S_{p+1}$ for any $p\in\N$, the proof of Theorem~\ref{thm:main} showed the identities
\[
 g_{\bf j} = g_{(j_1,\ldots, j_p)} a^{F(p) + R_p} u^+_{j_{p+1}} a^{-F(p)-R_p}
\]
and
\[
 x_1 g_{\bf j} a^{F(p)+R_p} u^+_{j_{p+1}}a^{R'} = x_{p+1} u_{j_{p+1}}^{(p+1)} u_{j_{p+1}},
\]
where $u^+_{j_{p+1}}\in B^U_{c(c+2)\delta}$ and $u_{j_{p+1}}\in B^{NAM}_{c(c+2)\delta}$. For $m=1,\ldots, n-1$, these yield
\begin{align}\label{formg}
 \nonumber x_1 g_{\bf i} & = x_1 g_{(i_1,\ldots, i_m)} \prod_{p=0}^{n-m-1} a^{F(m+p)+R_{m+p}} u^+_{i_{m+p+1}} a^{-F(m+p)-R_{m+p}}
\\
 & = x_{m+1} u_{i_{m+1}}^{(m+1)} a^{-F(m+1)} \prod_{p=1}^{n-m-1} a^{F(m+p)+R_{m+p}} u^+_{i_{m+p+1}} a^{-F(m+p)-R_{m+p}}.
\end{align}
Therefore
\begin{align}\label{x11}
x_1 &g_{\bf i} a^{F(n)+R_n}\overline B^U_{\eta_0/4}a^{-F(n)-R_n+k}
\\
\nonumber &  = \left( x_{m+1}u_{i_{m+1}}^{(m+1)} a^{k-F(m+1)}\right)\left(a^{F(m+1)-k} u_{i_{m+1}} a^{-F(m+1)+k}\right)
\\
\nonumber & \times \prod_{p=1}^{n-m-1} \left(a^{F(m+p)+R_{m+p}-k} u^+_{i_{m+p+1}} a^{-F(m+p)-R-{m+p}+k}\right) 
\\
\nonumber & \times \left(a^{F(n)+R_n-k} \overline B^U_{\eta_0/4} a^{-F(n)-R_n+k}\right)
\end{align}
for $m=1,\ldots, n-1$, and
\begin{align}\label{x12}
x_1 &g_{\bf i} a^{F(n)+R_n} \overline B^U_{\eta_0/4} a^{-F(n)-R_n+k} 
\\
\nonumber & = x_1 g_{i_1} a^k \prod_{p=0}^{n-2} \left( a^{F(p+1)+R_{p+1} - k} u^+_{i_{p+2}} a^{-F(p+1)-R_{p+1} + k}\right) 
\\
\nonumber & \times\left( a^{F(n)+R_n-k}\overline B^U_{\eta_0/4} a^{-F(n)-R_n+k}\right).
\end{align}
For $k\in [F(m+1), F(m+1)+R_{m+1}]$, we have
\begin{align*}
\prod_{p=1}^{n-m-1} \left( a^{F(m+p)+R_{m+p}-k} u^+_{i_{m+p+1}} a^{-F(m+p)-R_{m+p}+k}\right) \in B^U_r
\end{align*}
with
\[
 r = c(c+2)\delta \sum_{p=1}^{n-m-1}\lambda_0^{-(F(m+p)+R_{m+p}-k)} \leq c(c+2)\delta \frac{1}{\lambda_0+1} \leq \frac{\eta_0}{4},
\]
and
\[
 a^{F(m+1)-k}u_{i_{m+1}}a^{-F(m+1)+k} \in B^{NAM}_{\eta_0/4}.
\]
Hence, \eqref{x11} implies \eqref{contained} for $2,\ldots, n$. By the same argumentation, \eqref{x12} implies \eqref{contained} for $1$ (note that $g_{i_1}=u_{i_1}^{(1)}$).

We consider \eqref{contained} for $m\in\{1,\ldots, n\}$ and $k\in [F(m), F(m)+R_m]$. Proposition~\ref{prop:main} shows that $x_mu_{i_m}^{(m)} a^{k-F(m)} \in \homsp_{>\frac{s_0}{39}}$, and hence $x_mu_{i_m}^{(m)}a^{k-F(m)}\overline B^G_{\eta_0}\subseteq \homsp_{>s_1}$ for all $k\in [F(m), F(m)+R_m]$. As discussed above, this implies that for any point $y\in x_1 g_{\bf i} a^{F(n)+R_n} \overline B^U_{\eta_0/4} a^{-F(n)-R_n}$, its $T$-orbit $(ya^k)_{k\in\N_0}$ stays above height $s$ for (at least) $k\in [F(m)+\ell, F(m)+R_m-\ell]$. Thus, in the time interval $[0, F(n)+R_n]$, this orbit stays above height $s$ for at least $\sum_{j=1}^n R_j - 2n\ell$ steps. In turn, $(ya^k)_{k\in\N_0}$ visits $\compact'$ for at most $(n-1)R'+2n\ell$ values for $k$ in $[0,F(n)+R_n]$. One easily sees that 
\[
 \lim_{n\to\infty} \frac{(n-1)R' + 2n\ell}{F(n) + R_n} =0,
\]
which completes the proof.
\end{proof}

\subsection{Hausdorff dimension}

\begin{prop}\label{HDU}
We have 
\[
 \dim_H {\bf U}_\infty \geq \frac{p_1}2 = \frac12 \dim U - \frac{p_2}2.
\]
\end{prop}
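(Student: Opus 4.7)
The plan is to apply Lemma \ref{lem:Frostman} directly to the strongly tree-like collection $\mc U$ just constructed, so the two quantities to estimate are the supremal diameter $d_n(\mc U)$ and the stage density $\Delta_n(\mc U)$.

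Every element of $\mc U_n$ has the shape $g\,a^{F(n)+R_n}\overline B^U_{\eta_0/4}\,a^{-F(n)-R_n}$ with $g\in E'_n$, so the contraction property $a^L B^U_r a^{-L}\subseteq B^U_{\lambda_0^{-L}r}$ recorded in Section~\ref{sec:prelims} immediately gives
\begin{equation*}
d_n(\mc U)\leq\tfrac{\eta_0}{2}\lambda_0^{-(F(n)+R_n)}.
\end{equation*}
Since $R_k=k+k_0-1$, we have $F(n)+R_n\asymp n^2/2$ and hence $|\log d_n(\mc U)|\asymp (n^2/2)\log\lambda_0$.

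For $\Delta_n(\mc U)$ I would compute Lebesgue volumes exactly using the Jacobian of conjugation by $a$. On $\overline{\mf n}=\mf g_{-2}\oplus\mf g_{-1}$ the map $\Ad_{a^L}$ has eigenvalues $e^{-L}$ with multiplicity $p_2$ and $e^{-L/2}$ with multiplicity $p_1$, so $|\det\Ad_{a^L}|_{\overline{\mf n}}|=e^{-L(p_2+p_1/2)}$; since left translation by elements of $U$ preserves Lebesgue measure (U is connected, simply connected, nilpotent), this is exactly the factor by which $u\mapsto a^Lua^{-L}$ rescales $\lambda$. By Theorem~\ref{thm:main} each $B\in\mc U_n$ has $S_{n+1}$ pairwise disjoint children in $\mc U_{n+1}$, so with $L=F(n)+R_n$, $L'=F(n+1)+R_{n+1}=L+R_{n+1}+R'$,
\begin{equation*}
\Delta_n(\mc U)=\frac{S_{n+1}\,e^{-L'(p_2+p_1/2)}\vol(\overline B^U_{\eta_0/4})}{e^{-L(p_2+p_1/2)}\vol(\overline B^U_{\eta_0/4})}=S_{n+1}\,e^{-(R_{n+1}+R')(p_2+p_1/2)}.
\end{equation*}
Plugging in $S_{n+1}\asymp e^{R_{n+1}(p_2/2+p_1/4)}$ and taking logs yields $|\log\Delta_n(\mc U)|=R_{n+1}\bigl(\tfrac{p_2}{2}+\tfrac{p_1}{4}\bigr)+O(1)$.

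Feeding these estimates into Lemma~\ref{lem:Frostman}, the numerator satisfies $\sum_{i=0}^{j-1}|\log\Delta_i(\mc U)|\asymp (p_2/2+p_1/4)\cdot j^2/2$ and the denominator $|\log d_j(\mc U)|\asymp (j^2/2)\log\lambda_0$, so the limsup ratio equals $(p_2/2+p_1/4)/\log\lambda_0$. When $p_1>0$ we have $\lambda_0=e^{1/2}$, the ratio is $p_2+p_1/2$, and one obtains $\dim_H{\bf U}_\infty\geq(p_1+p_2)-(p_2+p_1/2)=p_1/2$, as claimed; when $p_1=0$ the claim is the trivial $\dim_H{\bf U}_\infty\geq 0$ (and in fact the same computation gives $p_2/2$ since then $\lambda_0=e$). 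The one delicate point is correctly accounting for the two different eigenvalues of $\Ad_a$ on $\overline{\mf n}$ when computing the volume; once the Jacobian $e^{-L(p_2+p_1/2)}$ is in hand, the choice $R_k=k+k_0-1$ together with $S_k\asymp e^{R_k(p_2/2+p_1/4)}$ is precisely calibrated so that the Frostman ratio converges to the optimal value.
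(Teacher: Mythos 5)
Your proposal is correct and follows essentially the same route as the paper's proof: you apply Lemma~\ref{lem:Frostman} to the constructed strongly tree-like collection, compute $\Delta_n(\mc U)=S_{n+1}e^{-(R_{n+1}+R')(p_2+p_1/2)}$ via the volume scaling of $a^L\overline B^U_{\eta_0/4}a^{-L}$, and bound $d_n(\mc U)$ by the contraction rate, exactly as the paper does. The only difference is cosmetic: you track $\lambda_0$ precisely (which even yields the sharper bound $p_2/2$ when $p_1=0$), whereas the paper uniformly uses the worst-case rate $e^{1/2}$, sufficient for the stated bound $p_1/2$.
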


\begin{proof}
We apply Lemma~\ref{lem:Frostman}. Let $k\in\N$ and $B\in \mc U_k$. Then
\[
 \delta_{k+1}(B,\mc U) = \frac{\lambda({\bf U}_k\cap B)}{\lambda(B)} = \frac{S_{k+1}\cdot \lambda\big(a^{F(k+1)+R_{k+1}}\overline{B}^U_{\eta_0/4}a^{-F(k+1)-R_{k+1}}\big)}{\lambda\big(a^{F(k)+R_k}\overline{B}^U_{\eta_0/4}a^{-F(k)-R_k}\big)},
\]
and hence 
\[
 \Delta_k(\mc U) = \delta_{k+1}(B,\mc U).
\]
For any $L\in\N$ we have
\[
 \lambda\big(a^L \overline{B}^U_{\eta_0/4}a^{-L}\big) = \left(\frac{\eta_0}{2}\right)^{p_1+p_2} e^{-L\left(p_2 + \frac{p_1}2\right)} = \left(\frac{\eta_0}{2}\right)^{p_1+p_2} e^{-L h_m(T)}.
\]
Thus,
\[
 \Delta_k(\mc U) = S_{k+1} e^{-(R_{k+1}+R')h_m(T)}.
\]
Note that $R_{k+1} = k+k_0$ and 
\[
 e^{\frac12 R_{k+1} h_m(T)} \geq S_{k+1} = \left\lfloor e^{\frac{k+k_0}2} \right\rfloor^{p_2} \cdot \left\lfloor e^{\frac{k+k_0}4}\right\rfloor^{p_1} \geq e^{\frac{k}{2} h_m(T)}.
\]
Then 
\[
 1\geq c_2 e^{-\frac{k}{2}h_m(T)}\geq \Delta_k(\mc U) \geq c_1 e^{-\frac{k}{2} h_m(T)}
\]
for some constants $c_1,c_2$. It follows that
\begin{align*}
\sum_{k=1}^{n-1} \left|\log\big(\Delta_k(\mc U)\big)\right| \asymp \frac{h_m(T)}{2} \sum_{k=1}^{n-1}k \asymp \frac{h_m(t)}{4}n^2.
\end{align*}
Moreover
\[
 d_n(\mc U) \leq \frac{\eta_0}2 e^{-\frac12(F(n)+R_n)},
\]
and hence
\[
 \left|\log\big(d_n(\mc U)\big)\right| \geq c \frac{n^2}{4}
\]
for some constant $c$ and sufficiently large $n$. Then 
\begin{align*}
\limsup_{n\to\infty} \frac{\sum_{k=1}^{n-1} \left|\log\big(\Delta_k(\mc U)\big)\right|}{\left|\log \big(d_n(\mc U)\big)\right|} \leq  h_m(T).
\end{align*}
Since $\dim U = p_1 + p_2$, this completes the proof.
\end{proof}

\begin{proof}[Proof of Theorem~\ref{thm:lower}]
The space of possible $x$ in Proposition~\ref{prop:main} (and hence of possible $x_1$ in Theorem~\ref{thm:main} and Proposition~\ref{limitsetdivergent}) is at least of dimension $\dim(NAM)$. For the Hausdorff dimension of the set $\mc D$ of points in $\homsp$ which diverge on average this observation implies
\[
 \dim_H \mc D \geq \dim NAM + \dim {\bf U}_\infty.
\]
Now using Proposition~\ref{HDU} completes the proof.
\end{proof}

\section{Proof of Theorem~\ref{thm:entropylowerbound}}

In \cite{Const}, the first named author proved the corresponding statement of Theorem~\ref{thm:entropylowerbound} for $\SL_{d+1}(\Z)\backslash \SL_{d+1}(\R)$, $d\geq 1$, and the action of a certain (singular) diagonal element of $\SL_{d+1}(\R)$. For the proof he used the variational principle for entropy and established the existence of sufficiently large subsets of $(n,\eps)$-separated points in $\SL_{d+1}(\Z)\backslash\SL_{d+1}(\R)$ whose trajectories are bounded but stay high up (near the bound) for a significant ratio of time (see \cite[Theorem~3.2]{Const}). These subsets are necessarily adapted to $\SL_{d+1}(\Z)\backslash \SL_{d+1}(\R)$. In Proposition~\ref{thm:entropymain} below we show the analogous statement for $\Gamma\backslash G$ and $T$ being the time-one geodesic flow. After that, the proof of Theorem~\ref{thm:entropylowerbound} is an adaption of \cite{Const}. For the convenience of the reader, we provide some details.

\begin{prop}
\label{thm:entropymain}
Let $s>39s_1$. Then there exists $R'\in \N$ such that for all $R\in\N$, $R>4\log 4$, there is a subset $\wt E$ of $\homsp_{\leq s}$ such that the following properties are satisfied:
\begin{enumerate}[{\rm (i)}]
\item\label{maini} There exists $s'>s$ such that 
\[
 T^\ell x \in \homsp_{\leq s'}
\]
for all $x\in\wt E$ and all $\ell\in\N_0$.
\item For any $m\in\N$ we find a subset $\wt E(m)$ of $\wt E$ such that 
\begin{enumerate}[{\rm (1)}]
\item the cardinality of $\wt E(m)$ is $S^m$ with $S = S(R) =\lfloor e^{\frac{R}{4}}\rfloor^{p_1}\cdot\lfloor e^{\frac{R}{2}}\rfloor^{p_2}$,
\item\label{mainii2} $\wt E(m)$ is $(mR + (m-1)R', \eta')$-separated for some $\eta'> 0$ not depending on $m$, and
\item\label{mainii3} for any $x\in\wt E(m)$ we have
\[
 \left| \left\{ \ell \in [0,mR+(m-1)R'-1] \left\vert\ T^\ell x\in\homsp_{\geq \frac{s}{100}}\right.\right\}\right| \geq mR.
\]
\end{enumerate}
\end{enumerate}
\end{prop}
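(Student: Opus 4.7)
The plan is to combine Proposition~\ref{prop:main} with the iterative shadowing construction of Theorem~\ref{thm:main}, run with \emph{constant} parameters at every stage, and then close the resulting long pseudo-orbits up into genuine periodic orbits. Concretely, fix an injectivity radius $\eta_0 > 0$ on a neighborhood of $\compact \sceq \homsp_{\leq s}$, pick $\eta < \min\{\eta_0(\lambda_0-1)/(4\lambda_0), \eps_1/2\}$ once and for all, and let $R' \sceq R'(\eta,\compact,\eta)$ denote the joining constant from Lemma~\ref{lem:join}; this $R'$ depends only on $s$ and $\eta_0$, not on $m$ or $R$. Given $R > 4\log 4$, apply Proposition~\ref{prop:main} with parameters $R$, $s$, and $\eta \sceq \eta_0$. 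The condition $R > 4\log 4$ gives $\eta_0 e^{-R/4} < \eta_0/4$, so the resulting subset $E \subseteq \overline B^U_{\eta_0 e^{-R/4}}$ satisfies $E \subseteq \overline B^U_{\eta_0/4}$, which is precisely what Theorem~\ref{thm:main} requires. Feed the constant sequences $x_k \sceq x_0$, $R_k \sceq R$, $E^{(k)} \sceq E$, $S_k \sceq S$ into Theorem~\ref{thm:main} to obtain a family $\{g_{\bf i} \mid {\bf i} \in [1,S]^m\} \subset U$ for every $m \in \N$. Note that with these choices $F(m) + R_m = mR + (m-1)R'$, matching the time scale of the statement exactly.

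The quantitative properties of $\widetilde{E}(m)$ are then almost immediate. The cardinality is $S^m$ by Theorem~\ref{thm:main}\eqref{Eniii}. For the separation (\ref{mainii2}), Theorem~\ref{thm:main}\eqref{Eniv} gives $d_G(g_{\bf i}a^{F(m)+R_m}, g_{\bf j}a^{F(m)+R_m}) > \eta_0/2$ for distinct ${\bf i}, {\bf j}$; since the points at time $F(m)+R_m$ lie inside the $\eta_0$-neighborhood of $\compact$ (via the containment produced by the proof) and $\eta_0$ is an injectivity radius of that neighborhood, the estimate survives the projection to $\homsp$, yielding separation with $\eta' \sceq \eta_0/2$. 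For the high-time count (\ref{mainii3}), repeat the containment argument used in the proof of Proposition~\ref{limitsetdivergent}: for each $k \in [1,m]$ and each $\ell \in [F(k), F(k)+R]$,
\[
T^\ell(x_0 g_{\bf i}) \in x_0\, u_{i_k}^{(1)} a^{\ell-F(k)}\overline B^G_{\eta_0},
\]
and Proposition~\ref{prop:main}\eqref{main3} places the center in $\homsp_{>s/39}$. Shrinking $\eta_0$ once (uniformly in $R$) so that every $\eta_0$-ball around a point above height $s/39$ lies in $\homsp_{\geq s/100}$, each block contributes at least $R$ time steps to the high-time count, giving the lower bound $mR$ after summing over $k = 1, \ldots, m$ in the disjoint blocks.

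The main obstacle is property (\ref{maini}), because the orbits of $x_0 g_{\bf i}$ are controlled only on $[0, mR + (m-1)R']$ and may escape afterwards, whereas $s'$ must be uniform in $m$. The natural remedy is to close the pseudo-orbit into a genuine periodic orbit: append one additional joining step via Lemma~\ref{lem:joinshadow} applied to $(z_-, z_+) = (T^{F(m)+R_m}(x_0 g_{\bf i}), x_0)$, producing an orbit of length $N_m \sceq mR + mR'$ whose endpoint lies in a small $NAM$-perturbation of $x_0$. Then invoke the Anosov closing lemma for $a$ acting on the locally homogeneous space $\Gamma\backslash G$, valid on the compact sublevel set carrying the pseudo-orbit since $a$ is uniformly hyperbolic transverse to the flow direction; this yields a periodic point $p_{\bf i}$ of period $N_m$ that shadows $x_0 g_{\bf i}$ to within $O(\eta)$. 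Set $\widetilde{E}(m) \sceq \{p_{\bf i} \mid {\bf i} \in [1,S]^m\}$ and $\widetilde{E} \sceq \bigcup_m \widetilde{E}(m)$. Periodicity gives bounded forward orbits, and because the pseudo-orbit each $p_{\bf i}$ shadows stays in a compact region depending only on $R$, $R'$, and $s$, a single $s' > s$ serves uniformly for (\ref{maini}). The three properties in (ii) transfer from $g_{\bf i}$ to $p_{\bf i}$ by making the shadowing constant (and hence our initial choice of $\eta$ and $\eta_0$) small enough that the perturbations in separation and heights remain strictly within the required margins.
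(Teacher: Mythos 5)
The first half of your plan coincides with the paper's: feed the constant data $x_k=x$, $R_k=R$, $E^{(k)}=E$ (from Proposition~\ref{prop:main}) into Theorem~\ref{thm:main}, note $F(m)+R_m=mR+(m-1)R'$ with $R'$ depending only on $\compact$ and $\eta$, and get the block-height count from the containment \eqref{contained}. The divergence, and the genuine gap, is how you secure property \eqref{maini}. You control the orbits of $x g_{\bf i}$ only up to time $F(m)+R_m$ and then try to close them into \emph{exact} $T$-periodic orbits of period $N_m$ via ``the Anosov closing lemma.'' But $T$ is right translation by $a$ on $\Gamma\backslash G$, which is only partially hyperbolic: the directions tangent to $\mf a\oplus\mf m$ are neutral, so the Anosov closing lemma does not apply, and a nearly-returning orbit segment is in general shadowed only by a closed orbit of the full flow (with a nearby, typically non-integer period, and an extra $M$-holonomy), not by a point with $T^{N_m}p=p$. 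Nothing in the paper provides such a closing statement, and the subsequent ``the properties transfer if the shadowing constant is small enough'' cannot be made to work by shrinking $\eta,\eta_0$ after the fact, since $R'$ was already fixed in terms of them. The paper avoids periodicity altogether: it extends each finite word by an infinite tail of $1$'s, shows $(g_{(i_1,\dots,i_n)})_n$ converges, and takes $\wt E(m)=\{x\,g_{{\bf i}_\infty}\}$; the infinite construction (via \eqref{formg}, bounded height variation per step, and constancy of $R_k$) confines the \emph{entire} forward orbit in some $\homsp_{\le s'}$, which is exactly what \eqref{maini} needs.

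A second, smaller but real gap is the separation step. Theorem~\ref{thm:main}\eqref{Eniv} gives a lower bound on the distance \emph{in $G$} between $g_{\bf i}a^{F(m)+R_m}$ and $g_{\bf j}a^{F(m)+R_m}$; your claim that this ``survives the projection to $\homsp$'' because $\eta_0$ is an injectivity radius is not valid as stated: a lower bound on $d_G$ of two particular lifts never bounds $d_\homsp$ from below, since another $\gamma$-translate could be closer, and an injectivity radius only identifies $d_\homsp$ with $d_G$ when the latter is \emph{small}. The paper handles this with Lemma~\ref{lem:group}: one first checks $d(g_{{\bf i}_\infty},g_{{\bf j}_\infty})=d(xg_{{\bf i}_\infty},xg_{{\bf j}_\infty})$ (using $\eta_0$ near $\compact$), shows the appended tails change the time-$F(m)+R$ distance by less than $\eta_0/8$ each, and then, using that the whole orbits lie in $\homsp_{\le s'}$, produces an \emph{intermediate} time $\ell$ at which the quotient distance is at least $\eta'$, an injectivity radius of $\homsp_{\le s'}$ (not $\eta_0/2$). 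So both the uniform bound $s'$ and the separation constant $\eta'$ come out of the infinite-word construction together with Lemma~\ref{lem:group}; your periodic-orbit shortcut and your direct projection argument each need to be replaced by these steps (or by a correct substitute you would have to prove).
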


To prove Proposition~\ref{thm:entropymain} we need the following lemma, which is similar to Lemma~5.2 in \cite{Const}. We omit its proof. Let
\[
 \lambda_1 \sceq \max\{|\lambda| \mid \text{$\lambda$ is an eigenvalue of $\Ad_a$ with $|\lambda|>1$}\}.
\]
Thus,
\[
 \lambda_1 =
\begin{cases}
e^{1/2} & \text{if $\mf g_2 = \{0\}$ (and hence $G/K$ is real hyperbolic),}
\\
e & \text{otherwise.}
\end{cases}
\]

\begin{lemma}\label{lem:group}
Let $s'>0$ and pick an injectivity radius $\eta>0$ of $\mc X_{\le  s'}$. Let $n\in\N$ and suppose that $g,h\in U$ and $x_0\in\homsp$ are such that $T^\ell(x_0 g),T^\ell(x_0 h) \in \mc X_{\le s'}$ for all $\ell \in [0,n]$. Further suppose that $d(g,h)=d( x_0 g,  x_0 h)$ and that $d(T^n g,T^n h)>\frac{\eta}{\lambda_1}$. Then there exists $\ell \in [0,n]$ such that $d(T^\ell(x_0 g),T^\ell(x_0 h))\geq \frac{\eta}{\lambda_1}$.
\end{lemma}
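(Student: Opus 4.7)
The plan is to compare the group distance $f(\ell) := d_G(ga^\ell, ha^\ell)$ with the quotient distance $F(\ell) := d_\homsp(T^\ell(x_0 g), T^\ell(x_0 h))$, using a single-step expansion bound $f(\ell+1) \leq \lambda_1 f(\ell)$ together with the injectivity radius to transfer between the two. Note that by hypothesis $F(0) = f(0)$ and $f(n) > \eta/\lambda_1$.

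The first ingredient is the comparison: if $f(\ell) < \eta$ and $T^\ell(x_0 g), T^\ell(x_0 h) \in \mc X_{\leq s'}$, then $F(\ell) = f(\ell)$. Indeed, any $\gamma \in \Gamma \setminus \{e\}$ with $d_G(\gamma T^\ell g, T^\ell h) < \eta$ would place two distinct preimages $T^\ell g$ and $\gamma T^\ell g$ of $T^\ell(x_0 g)$ inside $B_\eta^G(T^\ell h)$, contradicting the injectivity-radius hypothesis on $\mc X_{\leq s'}$.

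The second ingredient is the single-step expansion bound, which is dual to the contraction estimate $d(ua^{-L}, va^{-L}) \leq \lambda_0^{-L} d(u,v)$ recalled in Section~\ref{sec:prelims}. By left-invariance, $f(\ell+1) = d_G(1, a^{-1}(ga^\ell)^{-1}(ha^\ell)a)$, and since $A$ normalizes $U$ we can write $(ga^\ell)^{-1}(ha^\ell) \in U$ as $\exp(Z + X)$ with $Z \in \mf g_{-2}$, $X \in \mf g_{-1}$; conjugation by $a^{-1}$ sends $Z + X$ to $\Ad_{a^{-1}}(Z + X) = eZ + e^{1/2}X$. Each graded component scales by at most $\lambda_1$, and this linear estimate translates into the group-distance bound $f(\ell+1) \leq \lambda_1 f(\ell)$ via the Cauchy--Schwarz property $|[X',Y']| \leq |X'||Y'|$ of the bracket on $\overline{\mf n}$, exactly as for the contraction estimate.

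The proof concludes with a case split on where $f$ first crosses the level $\eta$. If $f(\ell) < \eta$ throughout $[0,n]$, the comparison gives $F(n) = f(n) > \eta/\lambda_1$ and we take $\ell = n$. Otherwise, let $\ell_0 \in [0,n]$ be the smallest index with $f(\ell_0) \geq \eta$. If $\ell_0 = 0$, then $F(0) = f(0) \geq \eta > \eta/\lambda_1$. If $\ell_0 \geq 1$, then $f(\ell_0 - 1) < \eta$ by minimality, so the comparison applies at $\ell_0 - 1$; combined with $f(\ell_0 - 1) \geq f(\ell_0)/\lambda_1 \geq \eta/\lambda_1$ from the single-step bound, this yields $F(\ell_0 - 1) \geq \eta/\lambda_1$, so we take $\ell = \ell_0 - 1$. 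The main obstacle is a clean derivation of the single-step expansion bound directly in the Riemannian distance on $U$ (not the linearized norm on $\overline{\mf n}$), but this is handled by the same Cauchy--Schwarz mechanism already used in Section~\ref{sec:prelims}.
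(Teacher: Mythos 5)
The paper itself omits the proof of this lemma (it only points to the analogue \cite[Lemma~5.2]{Const}), so there is nothing in the text to compare against; judged on its own, your argument is correct and is the natural one. The comparison step is right: if $f(\ell)=d_G(ga^\ell,ha^\ell)<\eta$ and both orbit points lie in $\mc X_{\le s'}$, then a $\gamma$ realizing a strictly smaller quotient distance would produce two distinct $\Gamma$-equivalent points inside the $\eta$-ball around a lift of $T^\ell(x_0h)\in\mc X_{\le s'}$, contradicting the injectivity radius — only phrase this with a fixed lift $g_0$ of $x_0$ (the relevant lifts are $\gamma g_0ga^\ell$ and $g_0ga^\ell$, not $\gamma ga^\ell$), which does not affect the argument. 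The first-crossing case split, together with the hypothesis $d(g,h)=d(x_0g,x_0h)$ handling $\ell_0=0$, then yields the conclusion. The one point to tighten is your justification of the one-step bound $f(\ell+1)\le\lambda_1 f(\ell)$: the Cauchy--Schwarz property of the bracket is not the relevant mechanism. Rather, by left-invariance $f(\ell+1)=d_G(1,a^{-1}wa)$ with $w=a^{-\ell}g^{-1}ha^\ell\in U$, and the conjugation $w\mapsto a^{-1}wa$ is globally $\lambda_1$-Lipschitz for the left-invariant metric because the operator norm of $\Ad_{a^{-1}}$ on $\mf g$ is $\lambda_1$ (its eigenvalues on $\mf g_{-2},\mf g_{-1},\mf c,\mf g_1,\mf g_2$ are $e,e^{1/2},1,e^{-1/2},e^{-1}$), granted the same implicit orthogonality of the root space decomposition with respect to the chosen metric that the paper already uses for its contraction estimate $d(ua^{-L},va^{-L})\le\lambda_0^{-L}d(u,v)$; note this global Lipschitz argument is available here precisely because the top eigenvalue over all of $\mf g$ equals $\lambda_1$, whereas the analogous naive argument would not work for the contracting direction.
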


\begin{proof}[Proof of Proposition~\ref{thm:entropymain}]
Let $\compact \sceq \homsp_{\leq s}$ and pick $\eta_0\in (0,1/2)$ such that it is an injectivity radius of $B_{\eta_0}(\compact)$. Apply Proposition~\ref{prop:main} with $\eta_0$ and $R$ to get a subset $E\subseteq B^U_{\eta_0/4}$ with 
\[
S=\lfloor e^{R/2} \rfloor^{p_2} \lfloor e^{R/4}\rfloor^{p_1}
\]
elements and $x\in\compact$ with properties as in that proposition. Let 
\[
 0< \eta < \frac{\eta_0 (\lambda_0-1)}{4\lambda_0}
\]
be small enough such that we may apply Theorem~\ref{thm:main}. In the following we will use the notation of Theorem~\ref{thm:main}. For $k\in\N$ define $R_k\sceq R$, $S_k\sceq S$, $E^{(k)}\sceq E$ and $x_k\sceq x$. Now Theorem~\ref{thm:main} provides $R'=R'(\eta,\compact)\in\N$ and a family of subsets
\[
 E'_n \sceq \{ g_{\bf i} \mid {\bf i}\in \mc S_n \},\quad n\in\N,
\]
of $U$ with the properties stated there. Let $\wt{\mc S} \sceq [1,S]^\N$ and let
\[
 {\bf i}_\infty = (i_j)_{j\in\N} \in \wt{\mc S}.
\]
As in the proof of Proposition~\ref{limitsetdivergent}, we see that $(g_{(i_1,\ldots, i_n)})_{n\in\N}$ is convergent. Let 
\[
 g_{{\bf i}_\infty} \sceq \lim_{n\to\infty} g_{(i_1,\ldots, i_n)}.
\]
Define
\[
 \wt E \sceq \left\{ x g_{{\bf i}_\infty} \left\vert\  {\bf i}_\infty \in \wt{\mc S}\right.\right\},
\]
and
\[
 \wt E(m) \sceq \left\{ xg_{{\bf i}_\infty} \left\vert\ {\bf i}_\infty \in\wt{\mc S},\ \text{$i_j=1$ for $j>m$}\right.\right\} \quad \text{for $m\in\N$.}
\]
Since the maximal variation of height under one application of $T$ is bounded, the sequence $(R_k)_k$ is constant (namely, $R$) and the starting points $xu$, $u\in E$, are contained in a compact set, we deduce from \eqref{formg} in the proof of Proposition~\ref{limitsetdivergent} (and a limit over $n$) that we find $s'>s$ such that the $T$-orbit of each element in $\wt E$ is contained in the compact set $\homsp_{\leq s'}$. 

Let $n\in\N$, ${\bf i}\in \mc S_n$ and $m\in\{1,\ldots, n\}$. From \eqref{formg} it follows that
\[
 xg_{\bf i} a^k \in xu_j a^{k-F(m)} \overline B^U_{\eta/2}
\]
for some $j\in \{1,\ldots, S\}$ and all $k\in [F(m), F(m)+R]$. Since $xu_ja^{k-F(m)} \in \homsp_{\geq s/39}$, we have $xg_{\bf i} a^k \in \homsp_{\geq \frac{s}{39} - \frac{\eta}{2}}$. Note that $\eta$ does not depend on $n,m$ or ${\bf i}$. Thus, for any $x\in \wt E$ it follows that 
\[
  \left| \left\{ \ell\in [0,mR+(m-1)R'-1] \left\vert\  T^\ell x \in \homsp_{\geq \frac{s}{39}+ \frac{\eta}{2}}\right.\right\}\right| \geq mR.
\]
For $\eta$ sufficiently small, this proves \eqref{mainii3}.

Obviously, the cardinality of $\wt E(m)$ is at most $S^m$. The equality follows from \eqref{mainii2}. For the proof of \eqref{mainii2} we want to make use of Lemma~\ref{lem:group}. For ${\bf i}_\infty,{\bf j}_\infty\in \wt{\mc S}$, Theorem~\ref{thm:main} yields $d(g_{{\bf i}_\infty},g_{{\bf j}_\infty})<\eta_0$. 
The proof of Proposition~\ref{limitsetdivergent} shows
\[
 xg_{\bf i} \in xg_{i_1} B^U_{\eta_0/4}
\]
for each ${\bf i} = (i_1,\ldots, i_n)\in \mc S_n$, $n\in\N$. It follows that $xg_{{\bf i}_\infty}, xg_{{\bf j}_\infty}\in B_{\eta_0}(\compact)$. Then $\eta_0$ being an injectivity radius of $B_{\eta_0}(\compact)$ yields
\[
 d(g_{{\bf i}_\infty},{{\bf j}_\infty}) = d(xg_{{\bf i}_\infty},xg_{{\bf j}_\infty}).
\]
Now let $m\in\N$ and ${\bf i} = (i_1,\ldots, i_m), {\bf j}=(j_1,\ldots, j_m)\in \mc S_m$, ${\bf i}\not= {\bf j}$. We claim that 
\[
 d\big( T^{F(m)+R}g_{({\bf i}, {\bf 1})}, T^{F(m)+R}g_{({\bf j},{\bf 1})}\big) > \frac{\eta_0}4,
\]
where $({\bf i}, {\bf 1})$ denotes the element in $\wt{\mc S}$ which extends ${\bf i}$ with $1$'s. We have
\begin{align*}
d\big( g_{\bf i}a^{F(m)+R},& g_{\bf j}a^{F(m)+R}\big)  \leq d\big( g_{\bf i}a^{F(m)+R}, g_{({\bf i}, {\bf 1})}a^{F(m)+R}\big) 
\\
& + d\big( g_{({\bf i},{\bf 1})}a^{F(m)+R}, g_{({\bf j},{\bf 1})}a^{F(m)+R}\big) + d\big(g_{({\bf j},{\bf 1})}a^{F(m)+R}, g_{\bf j}a^{F(m)+R}\big).
\end{align*}
By Theorem~\ref{thm:main}\eqref{Eniv}, 
\[
 d\big( g_{\bf i}a^{F(m)+R}, g_{\bf j}a^{F(m)+R}\big) > \frac{\eta_0}2.
\]
Let ${\bf 1}_n\sceq (1,\ldots, 1)\in \mc S_n$. Then
\[
 d\big(g_{\bf i}a^{F(m)+R}, g_{({\bf i}, {\bf 1})}a^{F(m)+R}\big) = \lim_{n\to\infty} d\big(g_{\bf i}a^{F(m)+R}, g_{({\bf i}, {\bf 1}_n)}a^{F(m)+R}\big)
\]
Since (see the proof of Proposition~\ref{limitsetdivergent})
\[
 g_{({\bf i}, {\bf 1}_n)} = g_{\bf i} \prod_{p=0}^{n-1} a^{F(m+p)+R} u^+_{i_{m+p+1}} a^{-F(m+p)-R}
\]
we find
\begin{align*}
d\big(g_{\bf i} a^{F(m)+R}, g_{({\bf i}, {\bf 1})}a^{F(m)+R}\big) & = 
\lim_{n\to\infty} d\Big(1, \prod_{p=0}^{n-1} a^{F(m+p) - F(m)} u^+_{i_{m+p+1}} a^{-F(m+p)+F(m)}\Big)
\\
& = \lim_{n\to\infty} d\Big( 1, \prod_{p=0}^{n-1} a^{p(R+R')} u^+_{i_{m+p+1}} a^{-p(R+R')}\Big)
\\
& \leq c(c+2)\delta \sum_{p=0}^\infty \lambda_0^{-p(R+R')}
\\
& < \frac{\eta_0}{8} \frac{(\lambda_0-1)^2}{\lambda_0^2} \frac{1}{1-\lambda_0^{-(R+R')}} < \frac{\eta_0}8.
\end{align*}
From this the claim follows. Pick now an injectivity radius $\eta'$ of $\homsp_{\leq s'}$ such that $\eta_0/4 \geq \eta'$. Applying Lemma~\ref{lem:group} with $\eta'$ completes the proof.
\end{proof}

\begin{lemma}\label{specialmeasure}
For any $\eps>0$ and any $s>s_1$ there exists a $T$-invariant probability measure $\mu$ on $\homsp$ such that 
\[
 h_\mu(T) > \frac12 h_m(T) -\eps \quad\text{and}\quad  \mu(\homsp_{\geq s}) > 1-\eps.
\]
\end{lemma}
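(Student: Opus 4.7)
The plan is to convert the $(N_m,\eta')$-separated sets from Proposition~\ref{thm:entropymain} into a near-maximal-entropy $T$-invariant probability measure by the standard lower-bound construction of the variational principle, following the strategy of \cite[\S5]{Const}.

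\textbf{Setting up the measures.} Given $\eps>0$ and $s>s_1$, set $s_0\sceq 200\,s$, so that $s_0>39\,s_1$ and $\homsp_{\ge s_0/100}\subseteq \homsp_{\ge 2s}$, and apply Proposition~\ref{thm:entropymain}. This produces $R'\in\N$ and, for every sufficiently large $R$, a set $\wt E\subseteq \homsp_{\le s_0}$ whose full $T$-orbits remain in some fixed compact set $\homsp_{\le s'}$, together with subsets $\wt E(m)\subseteq \wt E$ that are $(N_m,\eta')$-separated (with $N_m\sceq mR+(m-1)R'$ and $\eta'$ independent of $m$ and $R$), of cardinality $S^m$ with $\log S = \tfrac{R}{2} h_m(T) + O(1)$, and such that each element of $\wt E(m)$ has at least $mR$ of its first $N_m$ iterates inside $\homsp_{\ge 2s}$. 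I then form the uniform empirical measure $\nu_m$ on $\wt E(m)$ and its time-average
\[
 \mu_m \sceq \frac{1}{N_m}\sum_{k=0}^{N_m-1}(T^k)_*\nu_m.
\]
All $\mu_m$ are supported in the compact set $\homsp_{\le s'}$, so some subsequence converges in the weak* topology to a probability measure $\mu$, which is $T$-invariant by the standard Krylov--Bogolyubov calculation.

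\textbf{Entropy bound (the main obstacle).} I would fix a finite Borel partition $\xi$ of $\homsp_{\le s'}$ of diameter strictly less than $\eta'$ and with $\mu$-negligible boundary atoms. The $(N_m,\eta')$-separation of $\wt E(m)$ forces distinct points to land in distinct atoms of the dynamical refinement $\xi_{N_m}\sceq \bigvee_{i=0}^{N_m-1}T^{-i}\xi$, so $H_{\nu_m}(\xi_{N_m})=m\log S$. Misiurewicz's lemma (the lower-bound half of the variational principle) then transfers this estimate to the weak* limit:
\[
 h_\mu(T)\ge h_\mu(T,\xi) \ge \limsup_{m\to\infty}\frac{H_{\nu_m}(\xi_{N_m})}{N_m} = \frac{\log S}{R+R'}.
\]
Since the right-hand side tends to $\tfrac12 h_m(T)$ as $R\to\infty$, choosing $R$ sufficiently large relative to $R'$ makes $h_\mu(T)>\tfrac12 h_m(T)-\eps$. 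The technical heart of the argument is precisely this step: one must convert the purely combinatorial separation provided by Proposition~\ref{thm:entropymain} into a partition-entropy lower bound on the empirical measures and then push it through Misiurewicz's lemma without losing the crucial ratio $\log S /(R+R')\to\tfrac12 h_m(T)$.

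\textbf{Mass bound.} By construction, $\mu_m(\homsp_{<s})\le 1-mR/N_m$, which converges to $R'/(R+R')<\eps/2$ for $R$ large. Since the height function is continuous on $\homsp$ (locally a maximum over only finitely many Iwasawa coordinates, by the Siegel set finiteness in Proposition~\ref{funddomGR}), the set $\homsp_{<s}$ is open. The portmanteau theorem then yields $\mu(\homsp_{<s})\le \liminf_m\mu_m(\homsp_{<s}) \le \eps/2$, hence $\mu(\homsp_{\ge s})\ge 1-\eps/2>1-\eps$. Once the separated sets are in hand, this mass step is a routine portmanteau argument inside the fixed compact set $\homsp_{\le s'}$.
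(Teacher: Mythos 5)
Your proposal is correct and follows essentially the same route as the paper's proof: apply Proposition~\ref{thm:entropymain} (with a rescaled height level), take the uniform measures on the separated sets $\wt E(m)$, average along the orbit up to time $mR+(m-1)R'$, pass to a weak* limit inside the compact set $\homsp_{\le s'}$, and obtain the entropy bound from the separation via the lower-bound half of the variational principle (the paper cites the proof of \cite[Theorem~8.6]{Walters}, which is exactly the Misiurewicz-type partition argument you spell out) and the mass bound from the proportion-of-time estimate together with a portmanteau-type argument. The only differences are cosmetic (your $200s$ versus the paper's $100s$, and your slightly more detailed handling of the partition and of the open set $\homsp_{<s}$).
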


\begin{proof}
Throughout we use the notation of Proposition~\ref{thm:entropymain}. We apply this proposition with $100s$ to obtain the constant $R'\in\N$. We pick $R\in\N$, $R>4\log 4$, such that 
\[
 \frac{R}{R+R'} > 1-\eps \quad\text{and}\quad \frac{\log S(R)}{R+R'} > \frac12 h_m(T) - \eps.
\]
Note that this choice is possible since 
\begin{align*}
S(R) & = \left\lfloor e^{\frac{R}{4}}\right\rfloor \cdot \left\lfloor e^{\frac{R}2}\right\rfloor^{p_2} 
 > \left( e^{\frac{R}{4}} - 1\right)^{p_1} \cdot \left( e^{\frac{R}{2}} - 1\right)^{p_2} 
\\
& \to e^{\frac{R}{2}h_m(T)} \quad\text{as $R\to\infty$.}
\end{align*}
Now we choose a subset $\wt E$ of $\homsp_{\leq 100s}$ and a family $(\wt E(m))_{m\in\N}$ of subsets of $\wt E$ with the properties as in Proposition~\ref{thm:entropymain}. For $m\in\N$ let $\sigma_m$ denote the uniform probability measure on $\wt E(m)$, that is,
\[
 \sigma_m \sceq \frac{1}{S^m} \sum_{x\in\wt E(m)} \delta_x,
\]
where $\delta_x$ denotes the Dirac measure with support $\{x\}$. Finite averaging of $\sigma_m$ provides us with the probability measures 
\[
 \mu_m\sceq \frac{1}{mR + (m-1)R'} \sum_{i=0}^{mR + (m-1)R'-1} T^i_*\sigma_m
\]
on $\homsp$ with support 
\[
 \bigcup_{i=0}^{mR+(m-1)R'-1} T^i\wt E(m) \quad\subseteq \bigcup_{i\in\N_0} T^i\wt E \seqc \mc E.
\]
By Proposition~\ref{thm:entropymain}\eqref{maini} we find $s'>100s$ such that $\mc E \subseteq \homsp_{\leq s'}$. Let $\mu$ be any weak* limit of $(\mu_m)_{m\in\N}$. Then $\mu$ is $T$-invariant and, due to the compactness of $\homsp_{\leq s'}$, a probability measure. Note that 
\[
 \mc K \sceq \bigcap_{j\in\N_0} T^{-j}\homsp_{\leq s'}
\]
is a compact subset of $\homsp$ on which $T$ induces an action, and $\mc E\subseteq \mc K$. Thus, $\mu$ can be considered as a $T$-invariant probability measure on $\mc K$. Since each set $\wt E(m)$, $m\in\N$, is $(mR + (m-1)R',\eta')$-separated, respectively, the proof of the Variational Principle \cite[Theorem~8.6]{Walters} shows 
\[
 h_\mu(T) \geq \liminf_{m\to\infty} \frac{\log S^m}{mR + (m-1)R'} = \frac{\log S}{R+R'}.
\]
By the choice of $R$, we have
\[
 h_\mu(T) > \frac12 h_m(T) - \eps.
\]
Moreover, Proposition~\ref{thm:entropymain}\eqref{mainii3} and the choice of $R$ give
\[
 \mu_m(\homsp_{\geq s}) \geq \frac{mR}{mR+(m-1)R'} > \frac{R}{R+R'} > 1-\eps.
\]
Thus, 
\begin{align*}
\mu(\homsp_{\geq s}) & = \mu(\mc K \cap \homsp_{\geq s}) = \lim_{m\to\infty} \mu_m(\mc K \cap \homsp_{\geq s}) = \lim_{m\to\infty} \mu_m(\homsp_{\geq s}) > 1-\eps.
\end{align*}
This proves the lemma.
\end{proof}

For the proof of Theorem~\ref{thm:entropylowerbound} we recall that $m$ denotes the normalized Haar measure on $\homsp$.

\begin{proof}[Proof of Theorem~\ref{thm:entropylowerbound}]
For sufficiently large $n\in\N$ we apply Lemma~\ref{specialmeasure} with $\eps = \frac1n$ and $s=n$ to obtain a $T$-invariant probability measure $\mu_n$ on $\homsp$ with $\mu_n(\homsp_{\geq n}) > 1-\frac1n$ and 
\begin{equation}\label{boundentropsm}
 h_{\mu_n}(T) > \frac12 h_m(T) - \frac1n.
\end{equation}
Then the weak* limit of the sequence $(\mu_n)_n$ is the zero measure. Now \eqref{boundentropsm} and \cite[Theorem~7.5]{EKP} (the theorem presented in the Introduction) show
\[
 \lim_{n\to\infty} h_{\mu_n}(T) = \frac12 h_m(T).
\]
Thus, Theorem~\ref{thm:entropylowerbound} is proven for the case $c=\frac12h_m(T)$. If $c$ is any value in the interval $[\frac12 h_m(T), h_m(T)]$, then we consider the sequence $(\nu_n)_n$ of $T$-invariant probability measures on $\homsp$ given by the convex combination
\[
 \nu_n \sceq \left( \frac{2c}{h_m(T)} - 1 \right) m + \left( 2 - \frac{2c}{h_m(T)} \right) \mu_n.
\]
Recall that $m$ denotes the normalized Haar measure on $\homsp$. 
Its weak* limit $\nu$ satisfies 
\[
 \nu = \lim_{n\to\infty} \nu_n = \left( \frac{2c}{h_m(T)} - 1 \right) m,
\]
hence
\[
 \nu(\homsp) = \frac{2c}{h_m(T)} - 1.
\]
Moreover, 
\begin{align*}
\lim_{n\to\infty} h_{\nu_n}(T) & = \left( \frac{2c}{h_m(T)} - 1 \right) h_m(T) + \left( 2 - \frac{2c}{h_m(T)} \right) \lim_{n\to\infty} h_{\mu_n}(T)
\\
& = c.
\end{align*}
This finishes the proof.
\end{proof}

\bibliographystyle{amsalpha}
\bibliography{ap_bib}

\end{document}